\newtheorem{theorem}{Theorem}[section]
\newtheorem{lemma}{Lemma}[section]
\newtheorem{corollary}{Corollary}[section]
\theoremstyle{definition}
\newtheorem{definition}{Definition}[section]
\newtheorem{claim}{Claim}[section]
\newtheorem{case}{Case}
\newcommand{\flo}[1]{\lfloor #1 \rfloor}
\begin{document}
\title{Three-color online Ramsey numbers $\tilde{r}(P_3,P_3,P_{\ell})$ and $\tilde{r}(P_3, P_3, C_{\ell})$}
\author[1,2]{Hexuan Zhi}
\author[,1,2]{Yanbo Zhang\thanks{Corresponding author.}}
\affil[1]{School of Mathematical Sciences\\ Hebei Normal University\\ Shijiazhuang 050024, China}
\affil[2]{Hebei Research Center of the Basic Discipline Pure Mathematics\\ Shijiazhuang 050024, China}

\date{}
\maketitle
\let\thefootnote\relax\footnotetext{\emph{Email addresses:} {\tt hxzhi.edu@outlook.com} (H. Zhi), {\tt ybzhang@hebtu.edu.cn} (Y. Zhang)}

\begin{quote}
{\bf Abstract:}
For given graphs $G_1, \ldots, G_k$, let $\tilde{r}(G_1, \ldots, G_k)$ denote their online Ramsey number. In an influential paper on the online Ramsey numbers for paths and cycles, Cyman, Dzido, Lapinskas, and Lo (Electron. J. Combin., 2015) determined the exact values of $\tilde{r}(P_3, P_{\ell})$ and $\tilde{r}(P_3, C_{\ell})$. They also conjectured the exact value of $\tilde{r}(P_4, P_{\ell})$ and the limit of $\tilde{r}(P_k, P_{\ell})/\ell$ as $\ell \to \infty$ for $k \ge 5$. The former conjecture was independently confirmed by Bednarska-Bzd\c{e}ga (European J. Combin., 2024) and Y.B.~Zhang and Y.X.~Zhang (arXiv:2302.13640), while the latter was disproved by Mond and Portier (European J. Combin., 2024). In this paper, we extend this line of research to the three-color setting and establish the exact value of $\tilde{r}(P_3,P_3,P_{\ell})$ for $\ell\ge 2$ and $\tilde{r}(P_3, P_3, C_{\ell})$ for $\ell \ge 16$.

{\bf Keywords:} Ramsey number, online Ramsey number, path, cycle

{\bf 2020 MSC:} 05C55, 05C57, 05D10
\end{quote}

\section{Introduction}

The concept of the size Ramsey number was introduced in 1978 by Erd\H{o}s, Faudree, Rousseau, and Schelp~\cite{Erdos1978}, and has since become one of the central topics in the study of graph Ramsey theory. For graphs $G_1$ and $G_2$, the size Ramsey number $\hat{r}(G_1,G_2)$ is defined as
\[\hat{r}(G_1,G_2)=\min\{|E(G)|\mid G\rightarrow (G_1,G_2)\},\]
where $G\rightarrow (G_1,G_2)$ means that in every red-blue edge-coloring of $G$, there exists either a red subgraph isomorphic to $G_1$ or a blue subgraph isomorphic to $G_2$. When $G_1=G_2$, the notation is abbreviated as $\hat{r}(G_1)$.

Erd\H{o}s, Faudree, Rousseau, and Schelp~\cite{Erdos1978} posed the following question: For the path $P_n$ on $n$ vertices, is it true that $\hat{r}(P_n)/n^{2}\to 0$? Erd\H{o}s~\cite{Erdos1981} further offered a 100-dollar prize for a proof or disproof of the following problem: Is it true that
\[\hat{r}(P_n)/n\to \infty \quad\text{and}\quad\hat{r}(P_n)/n^{2}\to 0?\]
This problem was completely resolved by Beck~\cite{Beck1983}, who showed that $\hat{r}(P_n)<900n$. Through a series of successive improvements, the best known bounds to date are
\[(3.75-o(1))n<\hat{r}(P_n)\le 74n,\]
where the lower bound was established by Bal and DeBiasio~\cite{Bal2022} and the upper bound was obtained by Dudek and Pra{\l}at~\cite{Dudek2017}.

We now reformulate this problem as an online game. Suppose the game is played between two players: Builder and Painter. Builder is allowed to draw edges, while Painter must immediately color each newly drawn edge either red or blue. Builder wins the game if at any point a red copy of $G_1$ or a blue copy of $G_2$ appears. Thus, Builder wins as soon as a graph $G$ satisfying $G \to (G_1,G_2)$ is constructed. However, if Builder is restricted to drawing only one edge per round and Painter must immediately color it, it is possible for Builder to win after constructing far fewer than $|E(G)|$ edges. The minimum number of edges required for Builder to guarantee a win under this constraint is known as the online Ramsey number $\tilde{r}(G_1,G_2)$. Clearly, $\tilde{r}(G_1,G_2)\le \hat{r}(G_1,G_2)$.

The first version of the online Ramsey game was proposed by Beck~\cite{Beck1993}, and the term \emph{online Ramsey number} was coined by Kurek and Ruci\'{n}ski~\cite{Kurek2005}. In general, given simple graphs $G_1,G_2,\cdots,G_t$ and a color set $[t]$ with $t\ge 2$, the game begins with an empty graph on infinitely many vertices. In each round, Builder adds an edge between two nonadjacent vertices, and Painter immediately assigns one of $t$ colors to it. Builder's goal is to force Painter to create a monochromatic copy of $G_i$ in color $i$ for some $i\in [t]$, in as few rounds as possible, while Painter tries to delay this outcome. Assuming both players play optimally, the \emph{online Ramsey number} $\tilde{r}(G_1,G_2,\cdots,G_t)$ is the minimum number of rounds in which Builder can guarantee a win. When $t=2$ and $G_1=G_2$, we abbreviate this as $\tilde{r}(G_1)$.

For the online Ramsey numbers of paths, Grytczuk, Kierstead, and Pra{\l}at~\cite{Grytczuk2008} proved that $\tilde{r}(P_n)=2n-3$ for $2\le n\le 5$, $\tilde{r}(P_6)=10$, and that for $n\ge 7$,
\[
2n-3\le \tilde{r}(P_n)\le 4n-7.
\]
Pra{\l}at~\cite{Pralat2008,Pralat2012} further determined that $\tilde{r}(P_7)=12$, $\tilde{r}(P_8)=15$, and $\tilde{r}(P_9)=17$. Based on these exact values, Conlon~\cite{Conlon2010} posed the question: Is it true that for $n\ge 4$,
\[
\tilde{r}(P_n)\le 5n/2-5?
\]
This conjecture remains open.

For the non-diagonal case, Cyman, Dzido, Lapinskas, and Lo~\cite{Cyman2015} proved that $\tilde{r}(P_3,P_\ell)=\lceil 5(\ell-1)/4 \rceil$ for $\ell\ge 3$. If we replace the path with the cycle $C_{\ell}$ on $\ell$ vertices, the computation becomes more involved. They obtained the result $\tilde{r}(P_3, C_\ell)=\lceil 5\ell/4 \rceil$ for $\ell\ge 5$. In the same paper, two conjectures were proposed. The first conjecture states that $\tilde{r}(P_4,P_\ell)=\left\lceil 7\ell/5 \right\rceil-1$. This conjecture was independently confirmed by Bednarska-Bzd\c{e}ga~\cite{Bednarska2024} and Zhang and Zhang~\cite{Zhang2023}. The second conjecture asserts that $\lim\limits_{\ell\to\infty}\tilde{r}(P_k,P_{\ell+1})/\ell=3/2$ for $k\ge 5$. Bednarska-Bzd\c{e}ga~\cite{Bednarska2024} showed that if $k=o(\ell)$ and $\ell \to \infty$, then $\tilde{r}(P_k,P_\ell)\le (5/3+o(1))\ell$. Subsequently, Mond and Portier~\cite{Mond2024} established that $\tilde{r}(P_k,P_\ell)=(5/3+o(1))\ell$ for $10\le k=o(\ell)$, thereby disproving the second conjecture posed by Cyman et al. Quite recently, Adamska and Adamski~\cite{Adamska2025} also refuted this conjecture for $7\le k\le 9$.

Beyond the online Ramsey numbers of paths, substantial progress has also been made for other sparse graphs. For results on the online Ramsey numbers of cycles, we refer the reader to~\cite{Adamski2024c}. For those concerning paths versus cycles, see~\cite{Adamski2024a,Adamski2024b,Adamski2024c,Dybizbanski2020}. For paths versus stars, see~\cite{Grytczuk2008,Latip2021,Song2025}.

Motivated by the precise determination of $\tilde{r}(P_3, P_\ell)$ and $\tilde{r}(P_3, C_\ell)$ in the two-color setting by Cyman et al.~\cite{Cyman2015}, we naturally extend these problems to the three-color case. Specifically, we study the online Ramsey numbers $\tilde{r}(P_3, P_3, P_\ell)$ and $\tilde{r}(P_3, P_3, C_\ell)$, and determine their exact values for $\ell \ge 2$ and $\ell \ge 16$, respectively.

Our main results are as follows.
\begin{theorem}
	\label{thm:main1}For $\ell\ge 2$, we have
	\[\tilde{r}(P_3,P_3,P_\ell)=\begin{cases}3\ell/2-1, & \text{if}\ \,\, \ell \equiv 0\pmod{4}\ \text{and}\ \ell\ge 8; \\ \left\lfloor 3\ell/2 \right\rfloor, & \text{otherwise}. \end{cases}\]
\end{theorem}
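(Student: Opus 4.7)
The plan is to prove the theorem in two halves: a Builder strategy giving the upper bound $\tilde r(P_3,P_3,P_\ell)\le \lfloor 3\ell/2\rfloor$ (improving to $3\ell/2-1$ when $\ell\equiv 0\pmod 4$ and $\ell\ge 8$), and a matching Painter strategy. Throughout I use the standing observation that, in order to avoid a red $P_3$ or a blue $P_3$, Painter must keep both the red edges and the blue edges as matchings; in particular, once a vertex $v$ meets both a red and a blue edge, every further edge at $v$ is forced to be green.

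For the upper bound, I would have Builder construct a green path $P$ phase by phase, and show that the amortized cost is at most $3/2$ edges per new green edge. The key building block is a gadget in which Builder draws two or three edges adjacent to the current endpoint $v$ of $P$ (and possibly also at the opposite endpoint or an already-built internal vertex): if Painter ever responds with green to one of these edges then $P$ grows by one and the gadget ends cheaply; if Painter responds with red/blue, the matching constraints become so tight that the next forced edge at $v$ or at one of the newly introduced neighbours must be green, yielding a two-vertex extension. Iterating these gadgets until $P$ has $\ell$ vertices accounts for $\lfloor 3\ell/2\rfloor$ edges in the generic case. The refinement $3\ell/2-1$ in the exceptional regime is obtained by replacing the very first phase with a more economical opening that initializes red/blue edges at both future endpoints of $P$ simultaneously; the hypothesis $\ell\equiv 0\pmod 4$ is needed so that this saving aligns parity-wise with the subsequent extend-by-two iteration, while $\ell\ge 8$ provides enough room to absorb the non-standard opening.

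For the lower bound, I would exhibit a Painter strategy that survives one round fewer than the upper bound. Painter keeps red and blue as matchings and keeps each component of the green graph a path of length at most $\ell-2$, and upon seeing each new edge $xy$ applies the priority rule: green if this keeps every green component short; else red if neither $x$ nor $y$ already meets a red edge; else blue. To prove Painter survives the claimed number of rounds, I would use a potential function of the form $2|E_{\mathrm{red}}\cup E_{\mathrm{blue}}|+\sum_{C}\min(|E(C)|,\ell-2)$ taken over green components $C$, plus a bookkeeping correction for saturated vertices, and show that it can rise by at most $2$ per round while a losing position requires it to exceed a threshold of roughly $3\ell-2$. The \emph{main obstacle} lies in this lower bound: the interaction of matching constraints on red/blue with the short-path constraint on green produces a delicate case analysis around near-maximal green components, and in particular the special regime $\ell\equiv 0\pmod 4$, $\ell\ge 8$ must be separated from the generic one by a parity argument about whether two green paths of length $\ell/2-1$ can be spliced by a single new green edge without creating a $P_\ell$. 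The small base cases $\ell\le 7$ fall outside the inductive gadget and will be dispatched by direct case-by-case verification.
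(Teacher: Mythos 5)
Your proposal has a genuine gap, and it is in the lower bound. The Painter strategy you describe gives \emph{green top priority}: you color an edge green whenever that keeps every green component a path of length at most $\ell-2$, and only fall back to red/blue when a green edge would make a component too long. This priority order is fatal. Against it, Builder can simply grow a single green path greedily: the first $\ell-2$ edges $v_1v_2,\,v_2v_3,\,\dots,\,v_{\ell-2}v_{\ell-1}$ are all colored green by your rule (each keeps the component a path within the stated bound), producing a green $P_{\ell-1}$ with endpoint $v_{\ell-1}$. Builder then plays $v_{\ell-1}w_1$, $v_{\ell-1}w_2$, $v_{\ell-1}w_3$ to three fresh vertices: each of these would complete a green $P_\ell$, so your rule forces the first to be red, the second blue, and now $v_{\ell-1}$ meets both a red and a blue edge, so $v_{\ell-1}w_3$ is forced green and Builder wins in $\ell+1$ rounds. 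Already for $\ell=6$ this is $7$ rounds, while the theorem requires Painter to survive $\lfloor 3\cdot 6/2\rfloor-1=8$. (If ``length $\le \ell-2$'' is read as $\ell-2$ vertices rather than edges, Builder instead wins in $\ell+3$ rounds by repeating the three-edge attack once more at the new endpoint; this still beats the bound once $\ell\ge 10$.) Your proposed potential $2|E_{\mathrm{red}}\cup E_{\mathrm{blue}}|+\sum_C\min(|E(C)|,\ell-2)$ cannot detect this: in the counterexample the potential is only about $\ell+2$ at the moment of loss, far below your threshold of roughly $3\ell-2$, because the loss is caused by a purely local configuration (a saturated endpoint of a nearly maximal green path) that no such global count records.

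The paper's Painter strategy uses the \emph{opposite} priority and this is the essential point you are missing: color red if the new edge is nonadjacent to every red edge; else blue if nonadjacent to every blue edge; else green (with, in the case $\ell\equiv 1\pmod 4$, an overriding rule that forbids short non-green cycles). Because green is used only when forced, every green edge is incident either to a vertex of degree two in the red/blue graph, or to one red-endpoint and one blue-endpoint. This structural fact is what makes the edge-count/parity argument (the sets $X,Y,Z$ and the evenness of $|X|+|Y|$) go through. With your green-first rule no such structure exists and the count fails. Your upper-bound sketch is a genuinely different decomposition from the paper's — you grow the path a vertex or two at a time with amortized accounting, whereas the paper builds vertex-disjoint four-vertex ``units'' in four rounds each and splices them onto the growing path in batches of four vertices for five to seven rounds, with the good/bad distinction controlling the splicing cost, and handles the four residues of $\ell$ mod~$4$ by separate base cases ($P_5$, $P_7$, $P_8$, $P_2$) plus a common induction step. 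Your sketch is too coarse to verify the exact constants, in particular the $-1$ saving when $\ell\equiv 0\pmod 4$ and $\ell\ge 8$, but the basic amortization idea is not unreasonable; the serious error is in the lower bound.
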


\begin{theorem}
	\label{thm:main2}For $\ell\ge 16$, we have
	\[\tilde{r}(P_3,P_3,C_\ell)=\begin{cases}(3\ell+1)/2, & \text{if} \,\,\ell \equiv 3\pmod{4}; \\ \left\lfloor 3(\ell+1)/2 \right\rfloor, & \text{otherwise}. \end{cases}\]
\end{theorem}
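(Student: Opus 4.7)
The proof divides into an upper bound (Builder's strategy) and a lower bound (Painter's strategy), each analyzed according to the residue of $\ell$ modulo $4$.

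\textbf{Upper bound.} The key structural fact is that any coloring avoiding red and blue copies of $P_3$ forces both the red and the blue subgraphs to be matchings, so every vertex has non-green degree at most $2$. Consequently, Builder's real task is to force Painter into creating a green $C_\ell$. My plan is to first run a version of the Builder strategy behind Theorem~\ref{thm:main1} to drive Painter into producing a green path of length roughly $\ell-1$, while carefully preserving the red/blue state at the two endpoints. Builder then spends a small bounded number of additional edges (one or two, depending on $\ell \bmod 4$) to close this path into a green $C_\ell$. The exceptional saving in the case $\ell \equiv 3 \pmod 4$ should come from a more efficient endpoint gadget at the closing step, in which a single helper vertex can be shared by both ends of the nearly-complete cycle.

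\textbf{Lower bound.} Painter adopts a priority rule: given a new edge $\{u,v\}$, color it red if neither endpoint currently has a red edge, else blue if neither currently has a blue edge, else green. This automatically forbids red and blue $P_3$'s, and the whole analysis reduces to bounding how many edges Builder must play before the green subgraph can contain a $C_\ell$. I plan to track the structure of the green subgraph as it grows (a disjoint union of paths and short cycles), charging each green edge to the red and blue edges consumed at its endpoints in order to obtain a lower bound on the total round count. The four residue classes $\ell \pmod 4$ enter through the parity of the lengths of the evolving green paths and the local availability of red/blue slots at their ends.

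\textbf{Main obstacle.} The most delicate part is the closing argument for the green cycle. On Builder's side, completing a long green path into $C_\ell$ requires joining its two endpoints through a vertex whose red and blue slots are both already spent, and the efficiency gain in the $\ell \equiv 3 \pmod 4$ case hinges on a shared closing gadget that works only for this residue. On Painter's side, one must verify that any premature attempt by Builder to close the green cycle can be refused by coloring a critical edge red or blue using a still-available matching slot. The hypothesis $\ell \ge 16$ is a convenient threshold for avoiding a handful of small sporadic configurations that would otherwise demand separate case-by-case verification.
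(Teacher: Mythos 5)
Your high-level framework matches the paper's: the Painter priority rule you state is exactly the paper's greedy strategy, and the Builder side indeed reduces to building long green paths while tracking the red/blue state of their endpoints. But there are several concrete gaps that prevent the proposal from working as written.

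\textbf{Lower bound.} The unmodified priority rule ``red first, then blue, else green'' is not sufficient for every residue class. For $C_\ell$ with $\ell\equiv 0\pmod 4$ (equivalently, the graph-with-$\ell-1$-edges analysis at $\ell+1\equiv 1\pmod 4$), Painter must additionally color green any edge that would close a short non-green cycle in the red-blue subgraph $H$; this is needed so that $H$ is acyclic (a disjoint union of paths), and without it the counting argument degrades by one round and fails to meet the target. You also do not identify the key parity fact that $|X|+|Y|$ is even (where $X$ is the set of red-blue degree-$2$ vertices and $Y$ the degree-$1$ vertices meeting a red edge): it is precisely this $\bmod\ 4$ parity combined with the vertex-cover bound on the green edges that produces the saving of one round when $\ell\equiv 3\pmod 4$. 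So the exceptional case comes from the lower-bound arithmetic, not from a cleverer closing gadget on the Builder side, as you suggest.

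\textbf{Upper bound.} The claim that ``one or two additional edges'' suffice to close a green path into a $C_\ell$ is quantitatively wrong and would overrun the budget. Closing a single green $P_k$ into a green $C_k$ generically costs $4$--$5$ rounds (this is the content of Lemma~\ref{low Connection Lemma}), yet for $\ell\equiv 2\pmod 4$ the cycle bound exceeds the path bound by only one round. What makes the paper's round count work is that Builder never finishes a $P_\ell$ and then closes it: instead Builder constructs two vertex-disjoint green pieces (one of length roughly $\ell-8$ and a short piece of length $6$--$9$) and joins them with Lemma~\ref{Connection Lemma}, whose cost ($3$, $5$, $6$ or $7$ rounds) depends on whether the pieces are good/better/best paths. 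The careful bookkeeping of endpoint types (good vertex: one non-green edge incident; better vertex: both a red and a blue edge incident) is the load-bearing device, and a version of it must appear explicitly for the budget to close. Your remark about ``carefully preserving the red/blue state at the two endpoints'' gestures at this, but without the quantified statements (how many rounds each endpoint type buys back at closing time) the argument cannot be completed. Finally, the four residue classes are not handled by tweaking a single closing gadget; the paper uses different short pieces ($P_6$, $P_8$, $P_9$, or a $P_{4k-5}$ built with an extra $H_i$ gadget) for each class, and for $\ell\equiv 3\pmod 4$ the longer main piece $P_{4k-5}$ (not $P_{4k-8}$) together with a slightly cheaper construction is what realizes the one-round saving on the Builder side.
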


We will present the lower bounds for \cref{thm:main1} and \cref{thm:main2} in Section~\ref{sec2}. The proofs of the upper bounds for \cref{thm:main1} and \cref{thm:main2} are given in Section~\ref{sec3} and Section~\ref{sec4}, respectively.

\section{The lower bound of \cref{thm:main1} and \cref{thm:main2}}\label{sec2}

When $\ell=2$, Painter can simply color the first two edges red and blue, respectively. This yields $\tilde{r}(P_3,P_3,P_2)\ge 3$. When $\ell=3$, Painter can color the first three edges red, blue, and green, respectively, which gives $\tilde{r}(P_3,P_3,P_3)\ge 4$.

For $\ell\ge 4$, we assume that the graph $G$ is a connected graph with $\ell-1$ edges and maximum degree $2$. By definition, $G$ must be either $P_{\ell}$ or $C_{\ell-1}$. We now prove that
\[
\tilde{r}(P_3,P_3,G)\ge \begin{cases}3\ell/2-1, & \text{if}\ \,\, \ell \equiv 0\pmod{4}\ \text{and}\ \ell\ge 8; \\ \left\lfloor 3\ell/2 \right\rfloor, & \text{otherwise}. \end{cases}
\]
This establishes the lower bounds in \cref{thm:main1} and \cref{thm:main2}.

When $\ell=4$, we prove that in $5$ rounds Builder cannot force a red $P_3$, a blue $P_3$, or a green $G$. Painter colors the first appeared edge red. If the second edge is nonadjacent to the first one, Painter also colors it red. In the next three moves Painter colors one edge blue and two edges green. Then we have the lower bound. Thus we assume that the first two edges are adjacent. Painter colors the second one blue. Let $v_1v_2$ denote the red edge and $v_2v_3$ the blue one. In the next three moves Painter uses the following strategy: if there is an edge nonadjacent to $v_1v_2$, Painter colors the first such edge red; if there is an edge adjacent to $v_1v_2$ but nonadjacent to $v_2v_3$, Painter colors the first such edge blue; Painter colors all other edges green. If there is a red edge or a blue edge in these $3$ rounds, there are at most two green edges and our proof is done. If there are only green edges in these $3$ rounds, these edges are either $v_1v_3$ or incident to $v_2$, which cannot form a green $G$. This completes the case $\ell=4$.

For $\ell\ge 5$, we set $N=3\ell/2-1$ if $\ell\equiv 0\pmod{4}$, otherwise $N=\left\lfloor 3\ell/2 \right\rfloor$. We prove that Builder cannot force a red $P_3$, a blue $P_3$, or a green $G$ in $N-1$ rounds. We split the argument into two cases.

\begin{case}\label{case1}
	$\ell\not\equiv1\pmod{4}$.
\end{case}

For each newly added edge during the game, the order of priority for Painter to color is red, blue, and green. More concretely, let $e_i$ be the edge chosen by Builder in his $i$th move. If $e_i$ is nonadjacent to any red edge, Painter colors $e_i$ red; if $e_i$ is adjacent to a red edge but nonadjacent to any blue one, Painter colors $e_i$ blue; if $e_i$ is adjacent to both red and blue edges, Painter colors $e_i$ green. From Painter's strategy we see that all red edges form a matching and so are the blue edges.

After $N-1$ rounds, let $H$ be the graph induced by all red and blue edges. Then $\Delta(H)\le 2$ and hence $H$ consists of a disjoint union of cycles and paths. Moreover, each cycle in $H$ is an even cycle. Let $X$ be the set of vertices with degree $2$ in $H$. Let $Y$ and $Z$ be the sets of vertices with degree $1$ that are incident to a red edge and a blue edge in $H$, respectively. Thus, $|V(H)|=|X|+|Y|+|Z|$ and $|E(H)|=|X|+(|Y|+|Z|)/2$. Furthermore, we have the following claim.
\begin{claim}\label{clm:even}
Both $|Y|+|Z|$ and $|X|+|Y|$ are even.	
\end{claim}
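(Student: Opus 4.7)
The approach is to recognize both parities as the outputs of standard double-counting arguments on $H$. Since Painter's strategy forces the red edges to form a matching and the blue edges to form a matching, every vertex of $H$ is incident to at most one red edge and at most one blue edge. Combined with $\Delta(H)\le 2$, this pins down the local structure: a vertex of $X$ is incident to exactly one red edge and exactly one blue edge; a vertex of $Y$ is incident to exactly one red edge and no blue edge; and a vertex of $Z$ is incident to exactly one blue edge and no red edge.

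For the parity of $|Y|+|Z|$, the plan is to apply the handshake lemma to $H$. From the local description above, the sum of degrees in $H$ equals $2|X|+|Y|+|Z|$, and this must be even because it counts each edge twice. Subtracting the even quantity $2|X|$ leaves $|Y|+|Z|$ even. (Equivalently, $|Y|+|Z|$ is simply the number of odd-degree vertices of $H$.)

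For the parity of $|X|+|Y|$, I would double-count (vertex, red edge)-incidences. On the one hand, every red edge contributes exactly two such incidences, so the total equals $2r$, where $r$ is the number of red edges in $H$. On the other hand, by the local structure, the vertices contributing to this incidence count are precisely those of $X\cup Y$, each contributing once. Hence $|X|+|Y|=2r$ is even. (The same argument applied to the blue edges gives $|X|+|Z|=2b$, which is not stated in the claim but may be a useful companion identity later.)

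The only real obstacle is the bookkeeping step of turning Painter's strategy into the precise local pattern at vertices of $X$, $Y$, and $Z$; once this is verified, both parities follow immediately from handshake-style counts, and no case analysis on the cycle/path decomposition of $H$ is needed.
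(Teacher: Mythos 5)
Your proof is correct, and it takes a genuinely different and somewhat cleaner route than the paper's. The paper establishes the parity of $|X|+|Y|$ by decomposing $H$ into its components, then doing a case analysis (even cycle, odd-order path, even-order path) and showing $|C\cap X|+|C\cap Y|$ is even in each case; the odd-order-path case implicitly uses the fact that red and blue must alternate along a path of $H$. You instead bypass the decomposition entirely: after observing that each vertex of $X$ meets exactly one red and one blue edge while vertices of $Y$ (resp.\ $Z$) meet exactly one red (resp.\ one blue) edge and nothing else, the count of (vertex, red edge) incidences gives $|X|+|Y|=2r$ directly, with no need to reason about alternation or component structure. The $|Y|+|Z|$ parity is identical in both treatments (handshake lemma, or equivalently the odd-degree-vertex count). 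Your double-counting argument is shorter, makes the role of the matching condition more transparent, and as you note also yields the companion identity $|X|+|Z|=2b$ for free; the paper's version has the modest advantage of exhibiting the structural picture of $H$ that is reused informally elsewhere in the argument.
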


\begin{proof}
	Since the degree sum of $H$ is even, $|Y|+|Z|$ must be even. To show $|X|+|Y|$ is even, we need only to prove that $|C\cap X|+|C\cap Y|$ is even for every component $C$ of $H$. If $C$ is an even cycle, then $C\cap X=C$ and $C\cap Y=\emptyset$. If $C$ is a path with odd vertices, then $|C\cap X|=|C|-2$ and $|C\cap Y|=1$. If $C$ is a path with even vertices, then $|C\cap X|=|C|-2$ and $|C\cap Y|=0$ or $2$. In all three cases we conclude that $|C\cap X|+|C\cap Y|$ is even. Hence $|X|+|Y|$ is even.
\end{proof}

Suppose to the contrary that there is a green $G$ in $N-1$ rounds. From $|E(H)|+|E(G)|\le N-1$ and $|E(G)|=\ell-1$ we have\begin{equation}|X|+(|Y|+|Z|)/2\le N-\ell.\label{equ1}\end{equation}

Notice that Painter will color an edge green only when it has already been adjacent to a red edge and a blue edge. So each green edge is either incident to $X$, or has one end vertex in $Y$ and the other in $Z$. Let $G_1$ be the graph induced by all edges of the green $G$ that are incident to $X$. Then $G_1$ has at most $2|X|$ edges. Let $G_2$ be the graph induced by all edges of the green $G$ that has both end vertices in $Y\cup Z$.

According to our assumption, a green copy of $G$ has already appeared. Based on Painter's coloring strategy, there must be at least one blue edge in the graph. Once a blue edge appears, it must be adjacent to some red edge. Hence, we conclude that $|X|\ge 1$. Substituting this into Inequality (\ref{equ1}) yields
\begin{equation}|Y|+|Z|\le 2N-2\ell-2\le \ell-2.\label{equ3}\end{equation}

If $|Y|<|Z|$, it follows from Claim~\ref{clm:even} that $|Y|\le |Z|-2$. Since $Y$ is a vertex cover of $G_2$, the graph $G_2$ has at most $2|Y|$ edges. Adding up the edge numbers of $G_1$ and $G_2$, we have $\ell-1=|E(G_1)|+|E(G_2)|\le 2|X|+|Y|+|Z|-2$. Combining it with Inequality (\ref{equ1}) yields $\ell-1\le 2N-2\ell-2\le \ell-2$, a contradiction. If $|Y|>|Z|$, a contradiction can similarly be derived by symmetry. Therefore, we assume that $|Y|=|Z|$.

If $|Y|=|Z|=0$, then $|X|$ is even, and $\ell-1=|E(G_1)|\le 2|X|$. If $\ell\equiv0 \pmod{4}$ and $\ell\ge 5$, it follows from $2|X|\equiv0 \pmod{4}$ that $\ell\le 2|X|$. Combining it with Inequality (\ref{equ1}) yields $\ell\le 2N-2\ell\le \ell-2$, a contradiction. If $\ell\equiv2\ \text{or}\ 3 \pmod{4}$, it follows from $2|X|\equiv0 \pmod{4}$ that $\ell\le 2|X|-1$, which together with Inequality (\ref{equ1}) gives $\ell\le 2N-2\ell-1\le \ell-1$, again a contradiction.

If $|Y|=|Z|>0$, then since $Y\cup Z$ contains $2|Y|$ vertices, the graph $G_2$ has at most $2|Y|$ edges. According to Inequality (\ref{equ3}), the graph $G_1$ must contain at least one edge. Since $G_1$ and $G_2$ form a decomposition of the graph $G$, the graph $G_2$ is either a path or a union of disjoint paths. Therefore, $G_2$ has at most $2|Y|-1$ edges. Adding the numbers of edges in $G_1$ and $G_2$ yields $\ell-1\le 2|X|+2|Y|-1$, which implies $\ell\le 2|X|+2|Y|$.

If $\ell\equiv 0 \pmod{4}$ and $\ell\ge 5$, then combining this with Inequality (\ref{equ1}) leads to a contradiction. By Claim~\ref{clm:even}, we have $(2|X|+2|Y|)\equiv 0 \pmod{4}$. Hence, if $\ell\not\equiv 0 \pmod{4}$, the inequality $\ell\le 2|X|+2|Y|$ implies $\ell\le 2|X|+2|Y|-1$. Combining this with Inequality (\ref{equ1}) again yields a contradiction.

\begin{case}
	$\ell\equiv1 \pmod{4}$ and $\ell\ge 2$.
\end{case}

In this case $N=(3\ell-1)/2$. Painter uses a similar strategy as in Case~\ref{case1}. Let $C_k$ be a cycle of order $k$, each of whose edges is colored red or blue. During the online Ramsey game, the precedence order of colored graphs for Painter to avoid is a non-green $C_k$ for $3\le k\le (\ell-1)/2$, a red $P_3$, and a blue $P_3$. More specifically, let $H_i$ be the graph induced by all red and blue edges before the $i$th round, and let $e_i$ be the edge chosen by Builder in his $i$th move. If adding $e_i$ to $H_i$ would create a cycle of length at most $(\ell-1)/2$, Painter colors $e_i$ green. Otherwise, if $e_i$ is nonadjacent to any red edge, Painter colors $e_i$ red; if $e_i$ is adjacent to a red edge but nonadjacent to any blue one, Painter colors $e_i$ blue; if $e_i$ is adjacent to both red and blue edges, Painter colors $e_i$ green. From Painter's strategy we see that all red edges form a matching and so are the blue edges.

Set $H=H_N$. Suppose by contradiction that a green $G$ appears in $N-1$ rounds. Then $|E(H)|\le (N-1)-(\ell-1)=(\ell-1)/2$. Hence $H$ contains no cycle with length at least $(\ell+1)/2$. By Painter's strategy, $H$ consists of a disjoint union of paths. Let $V_1$ and $V_2$ be the sets of vertices that are incident to a red edge and a blue edge, respectively. Set $X=V_1\cap V_2$, $Y=V_1\setminus V_2$, and $Z=V_2\setminus V_1$. So the notations $X, Y, Z$ are the same as these in Case~\ref{case1}. From $|E(H)|=|X|+(|Y|+|Z|)/2$ we see that \begin{equation}|X|+(|Y|+|Z|)/2\le (\ell-1)/2.\label{equ2}\end{equation}

By Painter's strategy, there must be at least one blue edge in the graph. Moreover, any blue edge must be adjacent to some red edge. Hence, we conclude that $|X|\ge 1$. Substituting into Inequality (\ref{equ2}) yields
\begin{equation}|Y|+|Z|\le \ell-3.\label{equ4}\end{equation}

By Painter's strategy, every green edge is either incident to $X$, or has both endpoints in $Y\cup Z$. Let $G_1$ be the subgraph induced by all edges of the green copy of $G$ that are incident to $X$. Then $G_1$ has at most $2|X|$ edges. Let $G_2$ be the subgraph induced by all green edges with both endpoints in $Y\cup Z$. From Inequality (\ref{equ3}), it follows that $G_1$ contains at least two edges. Since $G_1$ and $G_2$ form a decomposition of the graph $G$, the graph $G_2$ must be a path or a union of disjoint paths. Thus, $G_2$ has at most $|Y|+|Z|-1$ edges. Summing the numbers of edges in $G_1$ and $G_2$, we obtain $\ell-1=|E(G_1)|+|E(G_2)|\le 2|X|+|Y|+|Z|-1$. Combining this with Inequality (\ref{equ2}), we get $\ell\le \ell-1$, a contradiction, which completes this case.\qed

\section{The upper bound of \cref{thm:main1}}\label{sec3}

For $\ell\ge 2$, we define the number of rounds $N$ as follows: $N=3\ell/2-1$ when $\ell\equiv 0\pmod{4}$ and $\ell\ge 8$, and $N=\flo{3\ell/2}$ otherwise. We will show that Builder can always force a red $P_3$, a blue $P_3$, or a green $P_\ell$ within $N$ rounds. Let us assume that neither a red $P_3$ nor a blue $P_3$ appears in the graph. We will then show that Builder can construct a green $P_\ell$ within $N$ rounds.
\begin{lemma}\label{basiclemma}
	In $4$ rounds, Builder can always construct one of the six graphs depicted below.
\end{lemma}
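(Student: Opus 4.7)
The plan is to describe an adaptive four-round strategy for Builder and to enumerate Painter's possible responses via a case analysis. Throughout, the standing hypothesis that no red $P_3$ or blue $P_3$ ever appears forces the red and blue edges to each form a matching; in particular, at every vertex there is at most one incident red edge and at most one incident blue edge, so at any vertex of degree $\ge 3$ in Builder's graph at least one incident edge must be green.

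First I would let Builder play $e_1 = v_1 v_2$. By the symmetry between red and blue, Painter's response is, up to relabeling, either green or red, so the first round splits into only two essentially distinct cases. In the second round Builder plays $e_2 = v_2 v_3$ adjacent to $e_1$; the matching conditions then sharply restrict Painter's options, since $e_2$ cannot repeat the color of $e_1$ whenever that color is red or blue. The third and fourth rounds are chosen adaptively according to what has been built so far. When there is already a green edge with a free endpoint, Builder extends the green subgraph by playing an edge at that endpoint; when two non-green edges share a common vertex (necessarily one red and one blue, by the matching condition), Builder probes at the opposite endpoints of those edges, where any new edge adjacent to both colors is forced by the matching conditions to be green. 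This ensures that in every branch the graph after four rounds contains a green substructure of one of the six prescribed shapes.

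The main obstacle is keeping the case analysis both exhaustive and compact. A priori there are on the order of $3^4$ colorings of Builder's four edges, but the red-matching and blue-matching constraints eliminate the overwhelming majority of them, and the red/blue swap identifies another large collection of branches in pairs. After these two reductions, the remaining outcomes are few enough that Builder's adaptive choices can funnel them into exactly six canonical graphs, up to isomorphism. No individual branch of the analysis is difficult; the nontrivial aspect is to design Builder's third and fourth moves so that every surviving Painter strategy really does end in one of the six listed pictures rather than in some additional configuration.

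Once the strategy is specified, the verification in each branch is direct: one records the colors assigned to $e_1,\dots,e_4$, checks that the matching conditions are respected, and identifies the resulting edge-colored graph with one of the six named graphs. The red/blue symmetry is then invoked to handle the branches originally discarded in the first round.
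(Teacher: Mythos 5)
Your high-level structure coincides with the paper's: build a $P_3$ in the first two rounds, split into three cases by the colors of its two edges (one red and one blue; exactly one green; two green), and choose rounds three and four adaptively within each case. That framing is correct. What is missing is exactly the part you yourself flag as ``nontrivial'': the concrete choice of rounds three and four in each case, and the verification that every Painter response lands in one of the six pictures. Without that, the lemma is not proved, and moreover the heuristics you offer in place of an explicit strategy are not merely vague but actually wrong in at least one case.

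Concretely, suppose the $P_3$ is $a$--$b$--$c$ with $ab$ red and $bc$ blue. Your rule ``probe at the opposite endpoints of the two non-green edges'' produces the forced-green edge $ac$, fine. But your other rule, ``when there is already a green edge with a free endpoint, extend the green subgraph by playing an edge at that endpoint,'' would then have Builder play, say, $ad$ to a new vertex $d$. That edge is adjacent only to one red and one green edge, so Painter is free to color it blue (or green), and in either case the resulting colored graph on $\{a,b,c,d\}$ is not isomorphic to any of $G_1,\dots,G_6$: it is a triangle plus a pendant, but the degree-$3$ vertex $a$ carries colors red/green/blue (resp.\ red/green/green) in a pattern that matches none of the six templates. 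The move Builder actually needs here is $bd$ from the \emph{center} $b$, which is adjacent to both the red and the blue edge and hence forced green -- precisely the move not covered by either of your heuristics. A similar issue arises in the two-green-edges case, where ``extending at a free endpoint'' can wander off to a fifth vertex instead of returning to close the $4$-cycle. So the argument is not just unfinished; the plan as stated would derail Builder. You need to exhibit the exact third and fourth moves in each of the three color cases and check the handful of Painter responses, as the paper does.
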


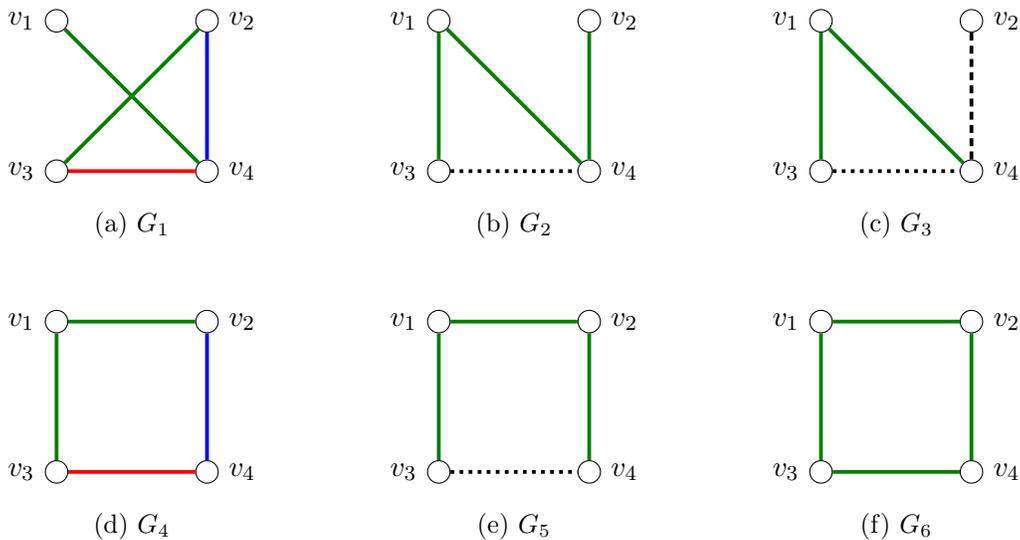
\begin{figure}[ht]
    \centering
	\definecolor{darkgreen}{rgb}{0.0, 0.5, 0.0} 
    \begin{subfigure}{0.3\textwidth}
        \centering
        \begin{tikzpicture}[scale=2, 
            vertex/.style={circle, draw, fill=white, inner sep=3pt},
            every label/.style={scale=1, black}]
        
        \node[vertex, label=left:$v_1$] (v1) at (0, 1) {};
        \node[vertex, label=right:$v_2$] (v2) at (1, 1) {};
        \node[vertex, label=left:$v_3$] (v3) at (0, 0) {};
        \node[vertex, label=right:$v_4$] (v4) at (1, 0) {};
        
        \draw[line width=0.5mm, darkgreen] (v1) -- (v4);
        \draw[line width=0.5mm, darkgreen] (v2) -- (v3);
        \draw[line width=0.5mm, blue] (v2) -- (v4);
        \draw[line width=0.5mm, red] (v3) -- (v4);
	\end{tikzpicture}
	\caption{$G_1$}
	\end{subfigure}
	\begin{subfigure}{0.3\textwidth}
		\centering
	\begin{tikzpicture}[scale=2, 
            vertex/.style={circle, draw, fill=white, inner sep=3pt},
            every label/.style={scale=1, black}]
        
        \node[vertex, label=left:$v_1$] (v1) at (0, 1) {};
        \node[vertex, label=right:$v_2$] (v2) at (1, 1) {};
        \node[vertex, label=left:$v_3$] (v3) at (0, 0) {};
        \node[vertex, label=right:$v_4$] (v4) at (1, 0) {};
        
        \draw[line width=0.5mm, darkgreen] (v1) -- (v3);
        \draw[line width=0.5mm, darkgreen] (v2) -- (v4);
        \draw[line width=0.5mm, dotted] (v3) -- (v4);
		\draw[line width=0.5mm, darkgreen] (v1) -- (v4);
        \end{tikzpicture}
		\caption{$G_2$}
		\end{subfigure}
		\begin{subfigure}{0.3\textwidth}
			\centering
        \begin{tikzpicture}[scale=2, 
            vertex/.style={circle, draw, fill=white, inner sep=3pt},
            every label/.style={scale=1, black}]
        
        \node[vertex, label=left:$v_1$] (v1) at (0, 1) {};
        \node[vertex, label=right:$v_2$] (v2) at (1, 1) {};
        \node[vertex, label=left:$v_3$] (v3) at (0, 0) {};
        \node[vertex, label=right:$v_4$] (v4) at (1, 0) {};
        
        \draw[line width=0.5mm, darkgreen] (v1) -- (v4);
        \draw[line width=0.5mm, darkgreen] (v1) -- (v3);
        \draw[line width=0.5mm, dotted] (v3) -- (v4);
        \draw[line width=0.5mm, densely dashed] (v2) -- (v4);
	\end{tikzpicture}
	\caption{$G_3$}
	\end{subfigure}

	\vspace{2em}

	\begin{subfigure}{0.3\textwidth}
		\centering
		\begin{tikzpicture}[scale=2, 
            vertex/.style={circle, draw, fill=white, inner sep=3pt},
            every label/.style={scale=1, black}]
        
        \node[vertex, label=left:$v_1$] (v1) at (0, 1) {};
        \node[vertex, label=right:$v_2$] (v2) at (1, 1) {};
        \node[vertex, label=left:$v_3$] (v3) at (0, 0) {};
        \node[vertex, label=right:$v_4$] (v4) at (1, 0) {};
        
        \draw[line width=0.5mm, darkgreen] (v1) -- (v2);
        \draw[line width=0.5mm, darkgreen] (v1) -- (v3);
        \draw[line width=0.5mm, blue] (v2) -- (v4);
        \draw[line width=0.5mm, red] (v3) -- (v4);
	\end{tikzpicture}
	\caption{$G_4$}
	\end{subfigure}
	\begin{subfigure}{0.3\textwidth}
		\centering
		\begin{tikzpicture}[scale=2, 
            vertex/.style={circle, draw, fill=white, inner sep=3pt},
            every label/.style={scale=1, black}]
        
        \node[vertex, label=left:$v_1$] (v1) at (0, 1) {};
        \node[vertex, label=right:$v_2$] (v2) at (1, 1) {};
        \node[vertex, label=left:$v_3$] (v3) at (0, 0) {};
        \node[vertex, label=right:$v_4$] (v4) at (1, 0) {};
        
        \draw[line width=0.5mm, darkgreen] (v1) -- (v2);
        \draw[line width=0.5mm, darkgreen] (v1) -- (v3);
        \draw[line width=0.5mm, darkgreen] (v2) -- (v4);
        \draw[line width=0.5mm, dotted] (v3) -- (v4);
	\end{tikzpicture}
	\caption{$G_5$}
	\end{subfigure}
	\begin{subfigure}{0.3\textwidth}
		\centering
	\begin{tikzpicture}[scale=2, 
            vertex/.style={circle, draw, fill=white, inner sep=3pt},
            every label/.style={scale=1, black}]
        
        \node[vertex, label=left:$v_1$] (v1) at (0, 1) {};
        \node[vertex, label=right:$v_2$] (v2) at (1, 1) {};
        \node[vertex, label=left:$v_3$] (v3) at (0, 0) {};
        \node[vertex, label=right:$v_4$] (v4) at (1, 0) {};
        
        \draw[line width=0.5mm, darkgreen] (v1) -- (v2);
        \draw[line width=0.5mm, darkgreen] (v1) -- (v3);
        \draw[line width=0.5mm, darkgreen] (v2) -- (v4);
        \draw[line width=0.5mm, darkgreen] (v3) -- (v4);
        \end{tikzpicture}
		\caption{$G_6$}
	\end{subfigure}
    \caption{Units that Builder can create in 4 rounds, where $\{\text{dotted}, \text{dashed}\} = \{\text{red}, \text{blue}\}$.}
    \label{fig:unit}
\end{figure}

\begin{proof} 
	Note that in graphs $G_2$ and $G_5$, dashed edges represent edges that may be either red or blue. In graph $G_3$, the two dashed edges represent one red edge and one blue edge, respectively. Also observe that the vertex labels in the diagrams are introduced solely for the convenience of later proofs and are irrelevant to the order in which the edges are added.

	Builder first constructs a $P_3$. The colors of the two edges in this $P_3$ fall into one of the following three cases: one red and one blue, exactly one green edge, or two green edges.
	\begin{enumerate}[label=(\arabic*).]
		\item If the two edges of the $P_3$ are one red and one blue, Builder connects the two endpoints of the $P_3$ with a new edge and adds another edge from the center vertex of the $P_3$ to a new vertex. Both of these new edges must be green, yielding graph $G_1$.
		\item If the $P_3$ contains exactly one green edge, Builder connects the two endpoints of the $P_3$ with a new edge $e_1$, and adds another edge $e_2$ from the vertex incident to the non-green edge to a new vertex.
		\begin{enumerate}
			\item If $e_1$ is not green, then $e_2$ must be green, and we obtain graph $G_1$ again.
			\item If both $e_1$ and $e_2$ are green, then we obtain graph $G_2$.
			\item If $e_1$ is green and $e_2$ is not green, then we obtain graph $G_3$.
		\end{enumerate}  
		\item If both edges of the $P_3$ are green, Builder adds two edges from a new vertex to the two endpoints of the $P_3$. The colors of these two new edges fall into exactly three cases: one red and one blue, exactly one green edge, or two green edges. These cases correspond to graphs $G_4$, $G_5$, and $G_6$, respectively.
	\end{enumerate}
	Therefore, within $4$ rounds, Builder can always construct one of the six graphs.
\end{proof}

We now introduce two important definitions. Each graph in Figure~\ref{fig:unit} is referred to as a \emph{unit}. A green path $P_n$ is called \emph{good} if at least one of its endpoints is incident to a red or blue edge; otherwise, it is called \emph{bad}.

\begin{lemma}\label{mainlemma}
	Suppose there is a green path $P_n$ with $n\ge 2$, along with a unit that is disjoint from it. Then the following holds:
	\begin{enumerate}
		\item If the path is good, then either a bad $P_{n+4}$ can be obtained in the next round, or a good $P_{n+4}$ can be obtained within the next $2$ rounds.
		\item If the path is bad, then either a bad $P_{n+4}$ can be obtained in the next round, or a good $P_{n+4}$ can be obtained within the next $3$ rounds.
	\end{enumerate} 
\end{lemma}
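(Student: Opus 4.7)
The plan is to handle each of the six units $G_1, \ldots, G_6$ from Figure~\ref{fig:unit} separately, and within each, to treat the good and bad subcases for $P_n$. Builder's strategy in every case is a short sequence (at most three) of edges, drawn either between an endpoint of the green $P_n$ and a suitably chosen vertex of the unit, or between two vertices of the unit itself. The central observation driving each move is that if a new edge is drawn between two vertices each of which is already incident to a red (respectively blue) edge, then Painter cannot color it red (respectively blue) without creating a monochromatic $P_3$. When both non-green colors are blocked at some endpoint of the new edge, Painter is forced to color it green.

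For units $G_2, G_5, G_6$, whose green subgraph already contains a $P_4$ on $\{v_1,v_2,v_3,v_4\}$, a single green edge connecting an endpoint of $P_n$ to an endpoint of this green $P_4$ completes the green $P_{n+4}$. For units $G_3, G_4$, where the green subgraph is only a $P_3$, an additional forced-green edge through a vertex of the unit incident to both red and blue (for example $v_4$ in $G_4$) is needed to complete a green $P_4$ through all four unit vertices before attaching to $P_n$. For $G_1$, where the green subgraph is merely $2K_2$, Builder first uses the vertex $v_1$ (which lies on a green edge and has no red or blue incidence) together with a vertex such as $v_2$ or $v_3$ (which does have a non-green incidence) to force an intra-unit edge green, thereby bridging the two green $P_2$'s into a green $P_3$, and then attaches to $P_n$ through another forced-green edge.

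The good vs.\ bad dichotomy captures Painter's initial leverage at Builder's chosen endpoint of $P_n$. In the good case, that endpoint already carries a red or blue incidence, so Painter is immediately constrained: either the first move is forced green (yielding a bad $P_{n+4}$ in one round when the new endpoint of the extended path lies at a vertex of the unit with no red or blue incidence) or Painter delays by using the opposite non-green color, which then forces green on the second move (yielding a good $P_{n+4}$ in two rounds). In the bad case, Painter has one extra free move at the initial endpoint of $P_n$; Builder absorbs this by a preliminary edge designed to establish a red or blue incidence at that endpoint, pushing the total bound to three rounds. The main technical obstacle will be the case analysis for $G_1$, where the weakness of the intra-unit green structure demands the most careful sequencing of Builder's moves and exhausts the full three-round budget in the bad case; a secondary subtlety is the bookkeeping that determines whether the new endpoint of the resulting $P_{n+4}$ carries a red or blue incidence, which in turn decides whether we land in the bad-in-one-round clause or the good-in-two(or three)-rounds clause.
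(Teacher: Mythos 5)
Your overall framework matches the paper's: a case analysis on the six units, with the forcing mechanism (an edge whose endpoints already have a red incidence and a blue incidence, in any distribution blocking both non-green colors, must be green) driving each move. But the detailed strategies you sketch for $G_1$ and $G_4$ do not work, and your treatment of $G_6$ glosses over the key difficulty.

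For $G_1$, you propose to "force an intra-unit edge green" between $v_1$ and $v_2$ or $v_3$. This cannot be forced: $v_1$ carries no non-green incidence at all (it sits only on the green edge $v_1v_4$), and $v_2$ (resp.\ $v_3$) carries only a blue (resp.\ red) incidence, so Painter is free to color $v_1v_2$ red or $v_1v_3$ blue. No new intra-unit edge in $G_1$ can be forced green. Similarly for $G_4$, you claim an extra forced-green edge completes "a green $P_4$ through all four unit vertices"; but the only way to touch $v_4$ is via a forced-green edge from $v_1$ (since $v_4$ is a better vertex), and $v_1v_4$ together with the existing green $v_1v_2, v_1v_3$ yields a star $K_{1,3}$ at $v_1$, not a $P_4$. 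There is simply no green $P_4$ on $\{v_1,\dots,v_4\}$ available in $G_4$. The paper resolves both of these units by attaching \emph{both} endpoints of $P_n$ to the unit: $u_2$ is joined to the better vertex $v_4$ (forced green), and $u_1$ is joined to $v_2$ or to $v_3$ depending on whether $u_1$'s non-green incidence is red or blue (so the two distinct non-green colors meet across the new edge). The resulting $P_{n+4}$ runs $v_3 v_1 v_2 u_1 P_n u_2 v_4$ (for $G_4$) or $v_3 v_2 u_1 P_n u_2 v_4 v_1$ (for $G_1$). In the bad case, $u_1$ has no incidence, so Builder first probes with $v_3 u_1$; if Painter refuses green (it must then be blue, since $v_3$ already carries red), Builder draws $v_3 u_2$ and $v_4 u_1$, both now forced. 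This two-sided attachment is the idea your sketch is missing.

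Two smaller issues: your description of $G_3$ attributes the forced intra-unit edge to "a vertex incident to both red and blue," but the forced edge is $v_2v_3$, which is incident to neither endpoint of the two non-green edges jointly---it is forced because $v_2$ and $v_3$ each carry one non-green incidence of \emph{different} colors. And lumping $G_6$ with $G_2$ and $G_5$ hides the central obstacle there: no vertex of $G_6$ carries any non-green incidence, so the first edge from $P_n$ into $G_6$ cannot be forced green; the paper's strategy draws $v_1u_1$ anyway and, if it comes back non-green, uses that very edge to create the needed red/blue incidence before forcing the next move.
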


\begin{proof}
	Let $u_1$ and $u_2$ denote the two endpoints of the green path $P_n$. If $P_n$ is good, we may assume without loss of generality that $u_1$ is incident to a red or blue edge. We now distinguish several cases based on the structure of the unit.

	\begin{enumerate}[label=(\arabic*).]
		\item If the unit is graph $G_3$, Builder draws edges $v_2v_3$ and $v_4u_1$ in the next $2$ rounds. These two edges must be green. Thus, within $2$ rounds, we obtain a good $P_{n+4}$: $v_2v_3v_1v_4u_1P_nu_2$.
		
		\item If the unit is graph $G_2$ or $G_5$, Builder draws the edge $v_3u_1$. If this edge is green, then a $P_{n+4}$ is formed in $1$ round. If $v_3u_1$ is not green, Builder proceeds to draw $v_3u_2$, which has to be green. It is easy to verify that a good $P_{n+4}$ is obtained within $2$ rounds.
		
		\item Suppose the unit is graph $G_1$ or $G_4$. For a good $P_n$, Builder draws the edge $v_4u_2$. If $u_1$ is incident to a red edge, Builder draws $v_2u_1$; otherwise, since $u_1$ must be incident to a blue edge, Builder draws $v_3u_1$. In either case, the two added edges must be green. Hence, a good $P_{n+4}$ is obtained within $2$ rounds.
		
		For a bad $P_n$, Builder first draws $v_3u_1$. If this edge is green, Builder proceeds with $v_4u_2$, which must also be green. In this case, a good $P_{n+4}$ is obtained in $2$ rounds. If $v_3u_1$ is not green, then it must be blue. Builder then draws both $v_3u_2$ and $v_4u_1$, which must be green. Thus, a good $P_{n+4}$ is obtained within $3$ rounds.
		
		\item Suppose the unit is graph $G_6$. Builder draws the edge $v_1u_1$. If it is green, we obtain a $P_{n+4}$ in $1$ round: $v_2v_4v_3v_1u_1P_nu_2$. Suppose $v_1u_1$ is not green.
		
		For a good $P_n$, Builder draws $v_3u_1$, which must be green. Thus, within $2$ rounds, we obtain a good $P_{n+4}$: $v_1v_2v_4v_3u_1P_nu_2$.
		
		For a bad $P_n$, Builder draws both $v_2u_1$ and $v_3u_1$. At least one of these edges must be green. Hence, a good $P_{n+4}$ is obtained within $3$ rounds: either $v_1v_3v_4v_2u_1P_nu_2$ or $v_1v_2v_4v_3u_1P_nu_2$.
	\end{enumerate}
	This completes the proof of the lemma.
\end{proof}

\begin{corollary}\label{maincor} 
	Suppose there exists a green path $P_n$ with $n\ge 2$. Then the following statements hold:
	\begin{enumerate}
		\item If this is a good $P_n$, then either a bad $P_{n+4}$ can be obtained within the next $5$ rounds, or a good $P_{n+4}$ can be obtained within the next $6$ rounds.
		\item If this is a bad $P_n$, then either a bad $P_{n+4}$ can be obtained within the next $5$ rounds, or a good $P_{n+4}$ can be obtained within the next $7$ rounds.
	\end{enumerate} 
\end{corollary}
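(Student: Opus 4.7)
The plan is a direct chaining of \cref{basiclemma} and \cref{mainlemma}. First, I would have Builder spend $4$ rounds invoking \cref{basiclemma} to construct one of the six units on four fresh vertices, chosen so as to be vertex-disjoint from the existing green $P_n$. This is always feasible because the game is played on an infinite vertex set and \cref{basiclemma} imposes no restriction on which vertices Builder uses. Then I would apply \cref{mainlemma} to the given green $P_n$ together with this freshly built, disjoint unit to extend the path by $4$.

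For the good case, \cref{mainlemma}(1) yields either a bad $P_{n+4}$ after $1$ further round or a good $P_{n+4}$ after at most $2$ further rounds; adding the initial $4$ rounds gives totals of $5$ and $6$, exactly as claimed. For the bad case, \cref{mainlemma}(2) produces either a bad $P_{n+4}$ after $1$ further round or a good $P_{n+4}$ after at most $3$ further rounds, giving totals of $5$ and $7$. In particular, in both cases, whenever Painter avoids creating a bad $P_{n+4}$ in the round immediately following the unit construction, Builder falls back on the longer branch of \cref{mainlemma} to secure a good $P_{n+4}$ within the advertised budget.

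The only point that needs checking, and it is really the only non-bookkeeping step, is the disjointness of the unit from the existing $P_n$. This follows immediately from the fact that Builder is free to draw every new edge between any pair of previously unused vertices. Consequently, I do not anticipate any substantive obstacle: the corollary is a clean composition of the two preceding lemmas with no further combinatorial content, and the bounds $5$, $6$, $7$ drop out by simple addition.
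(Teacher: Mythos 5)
Your proposal is correct and matches the paper's proof exactly: both invoke \cref{basiclemma} to build a vertex-disjoint unit in $4$ rounds, then apply \cref{mainlemma} and add the round counts.
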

\begin{proof}
	By Lemma~\ref{basiclemma}, Builder can construct one of the units in Figure~\ref{fig:unit} within $4$ rounds, such that it is vertex-disjoint from the existing green path $P_n$. Then, by applying Lemma~\ref{mainlemma}, the corollary follows immediately.
\end{proof}

\begin{lemma}\label{P8lemma} 
	Either Builder can create a bad $P_8$ within $10$ rounds, or a good $P_8$ within $11$ rounds.
\end{lemma}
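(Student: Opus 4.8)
The plan is to open with Lemma~\ref{basiclemma}, building a unit $U_1$ in $4$ rounds, and then to split into cases according to which of $G_1,\dots,G_6$ occurs. The device used throughout is that, since we assume Painter never makes a red or blue $P_3$, if $x$ is already incident to a red edge and $y$ to a blue edge then Builder's next edge $xy$ is \emph{forced to be green} (likewise if $x$ or $y$ is already incident to both a red and a blue edge); call a vertex incident to both a red edge and a blue edge \emph{saturated}.

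The easy cases are $U_1\in\{G_2,G_5,G_6\}$ and $U_1=G_3$. If $U_1\in\{G_2,G_5\}$ it already contains a good green $P_4$, and if $U_1=G_6$ a bad green $P_4$; Builder then builds a second unit on $4$ fresh vertices ($4$ more rounds) and applies Lemma~\ref{mainlemma} to lengthen this $P_4$ by $4$, obtaining a bad $P_8$ in one further round (at most $9$ rounds total) or a good $P_8$ in at most two further rounds ($G_2,G_5$) or three further rounds ($G_6$), i.e.\ within $11$ rounds. If $U_1=G_3$, the saturated vertex $v_4$ is an endpoint of the green $P_3$ of $G_3$, so one forced-green edge from $v_4$ to a new vertex yields a good green $P_4$ in $5$ rounds, after which Builder proceeds as above and finishes within $11$ rounds.

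The remaining case $U_1\in\{G_1,G_4\}$ is where the count is tight. Builder builds a second unit $U_2$ on fresh vertices ($4$ more rounds, $8$ total). If $U_2\in\{G_2,G_3,G_5,G_6\}$ then the previous paragraph applies with the roles reversed — use $U_2$ (directly, or after its one-round promotion when $U_2=G_3$) for the green $P_4$ and feed $U_1$ as the unit into Lemma~\ref{mainlemma} — and the budget again suffices. So suppose $U_1,U_2\in\{G_1,G_4\}$. Each unit then supplies only short green ``pieces'': $G_1$ gives two disjoint green edges covering all four of its vertices, while $G_4$ gives one green $P_3$ covering three of its vertices and leaves its (free) saturated vertex unused; each unit also supplies a saturated vertex, a vertex incident to exactly one red edge, and a vertex incident to exactly one blue edge. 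Builder finishes by drawing $3$ more edges, each forced green, that splice all these pieces (and the two free saturated vertices, in the $G_4{+}G_4$ case) into one green $P_8$ on $8$ distinct vertices. The guiding principles are that a link joining a red-only endpoint of one piece to a blue-only endpoint of another is forced green, that the green-only endpoint of a piece (which only arises from $G_1$) can always be linked via a saturated vertex of the \emph{other} unit, and that by making one end of the path a blue-only or saturated vertex the resulting $P_8$ is good; this is checked directly in each of the sub-cases $G_1{+}G_1$, $G_1{+}G_4$, $G_4{+}G_4$.

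The main obstacle is exactly this last case: with $U_1,U_2\in\{G_1,G_4\}$ the budget is used up ($8$ rounds for the units, $3$ for the links, $11$ in all), so one must verify that, whatever Painter did while forming the units, the red and blue edges are positioned so that $3$ suitable forced-green edges exist and can be played in a workable order. The other cases have comfortable slack, the only delicate point being the round-by-round bookkeeping of which branch of Lemma~\ref{mainlemma} is used.
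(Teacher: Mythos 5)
Your proposal is correct and follows essentially the same route as the paper: build two vertex-disjoint units via Lemma~\ref{basiclemma}, invoke Lemma~\ref{mainlemma} whenever one unit is $G_2$, $G_3$, $G_5$ or $G_6$ (after promoting $G_3$ to a good green $P_4$ by one forced-green edge), and, in the tight case where both units lie in $\{G_1,G_4\}$, splice the green pieces with three forced-green links into a good $P_8$ in exactly $11$ rounds. The only part you leave implicit---the explicit splicings for $G_1{+}G_1$, $G_1{+}G_4$, $G_4{+}G_4$---is precisely what the paper writes out (e.g.\ $v_1v_4v_1'v_4'v_2v_3v_2'v_3'$ for two copies of $G_1$), and your stated principles (edges at a saturated vertex, and red-only--to--blue-only edges, are forced green, independent of the order they are drawn) do yield those paths within the stated budget.
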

\begin{proof}
	By Lemma~\ref{basiclemma}, Builder can construct two vertex-disjoint units within $8$ rounds.

	If one of the units is isomorphic to $G_2$, $G_5$, or $G_6$, note that each of these contains a green $P_4$ as a subgraph. Then, by Lemma~\ref{mainlemma}, Builder can either create a bad $P_8$ in the next round, or a good $P_8$ within the next $3$ rounds.

	If one of the units is isomorphic to $G_3$, Builder connects vertices $v_2$ and $v_3$. This edge must be green, which yields a good $P_4$: $v_2v_3v_1v_4$. By Lemma~\ref{mainlemma}, Builder can then create a bad $P_8$ in the next round, or a good $P_8$ within the next $2$ rounds.

	If both units are copies of $G_1$, let one of the copies be as shown in Figure~\ref{fig:unit} (a), and label the corresponding vertices in the other copy as $v_1',v_2',v_3',v_4'$. Then the path $v_1v_4v_1'v_4'v_2v_3v_2'v_3'$ forms a good $P_8$.

	If both units are copies of $G_4$, let one of the copies be as shown in Figure~\ref{fig:unit} (d), and label the corresponding vertices in the other copy as $v_1',v_2',v_3',v_4'$. Then the path $v_2v_1v_3v_4'v_4v_2'v_1'v_3'$ forms a good $P_8$.

	If the two units are $G_1$ and $G_4$, with $G_1$ as shown in Figure~\ref{fig:unit} (a), and the corresponding vertices in $G_4$ labeled as $v_1',v_2',v_3',v_4'$, then the path $v_2v_3v_4'v_1v_4v_2'v_1'v_3'$ forms a good $P_8$.
\end{proof}

\begin{theorem}\label{finallemma} 
	For $k\ge 2$, either a bad $P_{4k}$ can be obtained within $6k-2$ rounds, or a good $P_{4k}$ can be obtained within $6k-1$ rounds.
\end{theorem}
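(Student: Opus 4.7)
The plan is to prove the statement by induction on $k$. The base case $k = 2$ is precisely Lemma~\ref{P8lemma}, since $6\cdot 2 - 2 = 10$ and $6\cdot 2 - 1 = 11$.

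For the inductive step, I assume the claim holds for some $k \ge 2$ and deduce it for $k+1$. By the inductive hypothesis, Builder has a strategy that, after at most $6k - 1$ rounds in total, produces either a bad $P_{4k}$ within $6k - 2$ rounds or a good $P_{4k}$ within $6k - 1$ rounds. In either situation I would invoke Corollary~\ref{maincor} (which applies since $4k \ge 8 \ge 2$) to extend the green path by four vertices, splitting into two cases according to which outcome the inductive hypothesis produced.

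In Case~A, where Builder already has a bad $P_{4k}$ in $6k - 2$ rounds, I apply part~(2) of Corollary~\ref{maincor}: the extension costs either $5$ more rounds for a bad $P_{4k+4}$, giving a total of $6k + 3 \le 6(k+1) - 2$, or $7$ more rounds for a good $P_{4k+4}$, giving exactly $6(k+1) - 1$. In Case~B, where Builder has a good $P_{4k}$ in $6k - 1$ rounds, I apply part~(1): the extension costs either $5$ more rounds for a bad $P_{4k+4}$, giving exactly $6(k+1) - 2$, or $6$ more rounds for a good $P_{4k+4}$, giving exactly $6(k+1) - 1$. In all four sub-outcomes the target bound is met, so the inductive step closes.

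I do not expect a genuine obstacle here, since the theorem is essentially a bookkeeping consequence of Lemma~\ref{P8lemma} and Corollary~\ref{maincor}. The only point worth flagging is that the arithmetic is tight on both sides: in Case~B the bad extension hits $6(k+1) - 2$ on the nose, and in Case~A the good extension hits $6(k+1) - 1$ on the nose, so the constants $5$, $6$, $7$ appearing in Corollary~\ref{maincor} are exactly what is needed and there is no slack to absorb any off-by-one error.
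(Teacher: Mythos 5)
Your proof is correct and follows essentially the same route as the paper: induction with base case Lemma~\ref{P8lemma} and inductive step via Corollary~\ref{maincor}. The only cosmetic difference is that you step from $k$ to $k+1$ whereas the paper steps from $k-1$ to $k$; the case analysis and arithmetic you spell out are exactly what the paper leaves implicit.
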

\begin{proof}
	We proceed by induction on $k$. For the base case $k=2$, the statement holds by Lemma~\ref{P8lemma}. For $k\ge 3$, assume the statement holds for $k-1$, that is, either a bad $P_{4k-4}$ can be obtained within $6k-8$ rounds, or a good $P_{4k-4}$ can be obtained within $6k-7$ rounds. The desired conclusion then follows by applying Corollary~\ref{maincor}.
\end{proof}

\begin{lemma}\label{P5lemma}
	Either Builder can create a bad $P_5$ within $6$ rounds, or a good $P_5$ within $7$ rounds.
\end{lemma}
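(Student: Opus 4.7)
The plan is to first invoke Lemma~\ref{basiclemma} to construct one of the six units $G_1, \ldots, G_6$ in $4$ rounds, and then handle each unit case by case, adding at most $3$ further rounds to obtain the desired $P_5$. The main device throughout is the following observation: whenever a vertex is already incident to both a red and a blue edge, any subsequent edge from it is forced green; more generally, an edge $xy$ is forced green whenever each of the alternative colors would create a monochromatic $P_3$ with an existing red or blue edge at $x$ or $y$. By engineering such configurations from each unit, Builder drives Painter into forced green responses until a green $P_5$ appears.

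For the three units $G_2, G_5, G_6$ which already contain a green $P_4$, I would attempt to add a pendant edge at an endpoint of that $P_4$; if Painter colors it green we are done in at most $6$ rounds (sometimes $5$), while otherwise the non-green response begins saturating a vertex that permits a forced green edge in the next round. For $G_3$, I would first add $v_2v_3$, which is forced green since red would extend $v_3v_4$ and blue would extend $v_2v_4$ into a monochromatic $P_3$ through $v_4$; this produces a green $P_4$, and then $v_4u$ (forced green because $v_4$ is incident to both red and blue) gives a good $P_5$ in $6$ rounds. For $G_4$ the same idea extends to the triple $v_2v_3$, $v_4u$, $v_1v_4$, each forced green thanks to the red--blue pair at $v_4$, producing a good $P_5$ in $7$ rounds. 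Finally for $G_1$, Builder starts with $v_1v_2$, which Painter may color only green or red (blue is blocked via $v_2v_4$); in the green case a single forced edge $v_4u$ closes the argument in $6$ rounds, while in the red case the forced pair $v_4u$ and then $v_2u$ (now forced because $v_2$ carries green, red, and blue) completes a good $P_5$ in $7$ rounds.

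The main obstacle is the handling of $G_5$ and $G_6$, where Painter has considerable freedom on the first pendant move and the strategy must chain up to three probes at distinct endpoints. For $G_5$ I would probe successively at $v_3$, $v_4$, and then $v_3$ with fresh new vertices, each time using that the previous pendant leaves one of the new vertices carrying both red and blue so as to force the next edge green in the worst case; for $G_6$ I would analogously probe at $v_1$, $v_3$, then $v_2$ with fresh vertices, exploiting the fact that by the third probe the new vertex has accumulated both red and blue. Verifying in every branch that the resulting green $P_5$ is bad (within $6$ rounds) or good (within $7$ rounds) is a routine inspection of which endpoints meet a red or blue edge, but the combinatorial bookkeeping across all of Painter's responses is the most delicate part of the argument.
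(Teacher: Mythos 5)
Your proposal is correct and follows the same overall framework as the paper: invoke Lemma~\ref{basiclemma} to build a unit in four rounds and then force green extensions by case analysis over $G_1,\dots,G_6$, using the observation that any new edge incident to a vertex already carrying both a red and a blue edge must be green. The per-unit moves differ from the paper's, though. For $G_1,G_3,G_4$, the paper uniformly adds two edges from $v_2$ to new vertices (one of which is forced green) and then bridges with $u_0v_4$, whereas you play the internal edge $v_2v_3$ (or $v_1v_2$ for $G_1$) to exploit the red--blue pair at $v_4$ directly; this even saves a round for $G_3$. For $G_2$ and $G_5$, the paper draws two edges from $v_3$ and uses a parity argument, while you probe and re-probe at the good endpoint, which is slightly less tight for $G_5$ (three probes rather than two) but still within the seven-round budget. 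For $G_6$, your idea of attaching $u_0$ to three vertices of the green $C_4$ so that two non-green responses force the third green is the same device as the paper's $u_0v_3, u_0v_4, u_0v_1$ sequence. One small misstatement: in your $G_5$ discussion you write that the previous pendant leaves ``one of the new vertices'' carrying both red and blue, but the fresh probe vertices $w_i$ each receive only one edge; what actually becomes better is the unit vertex $v_3$ (resp.\ $v_4$), which already carried one non-green edge. The substance of the argument is unaffected.
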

\begin{proof}
	By Lemma~\ref{basiclemma}, Builder can construct a unit within $4$ rounds, with vertex labels as shown in Figure~\ref{fig:unit}.

	If the unit is isomorphic to $G_1$, $G_3$, or $G_4$, then Builder adds two edges from vertex $v_2$ to new vertices, at least one of which must be green. Without loss of generality, suppose $v_2u_0$ is a green edge. Then Builder adds the edge $u_0v_4$, which must be green. It is easy to verify that this yields a good $P_5$ within a total of $7$ rounds.

	If the unit is isomorphic to $G_2$ or $G_5$, then Builder adds two edges from vertex $v_3$, at least one of which must be green. In this case, a $P_5$ is obtained within a total of $6$ rounds.

	If the unit is isomorphic to $G_6$, then Builder adds edges from a new vertex $u_0$ to both $v_3$ and $v_4$. If at least one of these is green, a $P_5$ is obtained within $6$ rounds. If neither $u_0v_3$ nor $u_0v_4$ is green, then Builder adds the edge $u_0v_1$, which must be green. In this case, a good $P_5$ is created within $7$ rounds: $u_0v_1v_2v_4v_3$.
\end{proof}

\begin{theorem}\label{final1lemma} 
	For $k\ge 2$, either a bad $P_{4k-3}$ can be obtained within $6k-6$ rounds, or a good $P_{4k-3}$ can be obtained within $6k-5$ rounds.
\end{theorem}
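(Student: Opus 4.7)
The plan is to induct on $k$ in exact parallel with the proof of Theorem~\ref{finallemma}, substituting Lemma~\ref{P5lemma} for Lemma~\ref{P8lemma} as the base case. For $k=2$ the claim specializes to a bad $P_5$ within $6$ rounds or a good $P_5$ within $7$ rounds, which is precisely what Lemma~\ref{P5lemma} delivers. For $k\ge 3$, I would assume the statement for $k-1$ --- namely that Builder can force either a bad $P_{4k-7}$ within $6k-12$ rounds or a good $P_{4k-7}$ within $6k-11$ rounds --- and then apply Corollary~\ref{maincor} to lengthen that path by four vertices.

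The bookkeeping I would then carry out is the following. If the induction gives a bad $P_{4k-7}$ within $6k-12$ rounds, Corollary~\ref{maincor}(2) yields either a bad $P_{4k-3}$ in at most $5$ further rounds or a good $P_{4k-3}$ in at most $7$ further rounds, for cumulative totals of $6k-7\le 6k-6$ and $6k-5$ respectively. If instead the induction gives a good $P_{4k-7}$ within $6k-11$ rounds, Corollary~\ref{maincor}(1) supplies either a bad $P_{4k-3}$ in at most $5$ more rounds or a good $P_{4k-3}$ in at most $6$ more rounds, for totals $6k-6$ and $6k-5$. In all four sub-cases the round count fits within the target bounds $6k-6$ (bad) and $6k-5$ (good), closing the induction.

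I do not expect a genuine obstacle here: the statement is engineered so that the jump of $+4$ vertices in the target path lines up with the $+4$ in Corollary~\ref{maincor}, and the discrepancy of $1$ between the bad and good round budgets is consistent on both sides. The only point worth flagging is that Corollary~\ref{maincor} requires Builder to construct a unit vertex-disjoint from the existing path, which is unproblematic because the game is played on an infinite vertex set --- the same tacit observation used in the proof of Theorem~\ref{finallemma}.
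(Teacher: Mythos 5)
Your proposal is correct and follows essentially the same route as the paper's own proof: induction on $k$ with Lemma~\ref{P5lemma} as the base case and Corollary~\ref{maincor} supplying the $+4$-vertex step, with exactly the bookkeeping you carried out. (As a minor note, you also implicitly corrected a typo in the paper's write-up of the induction hypothesis, which should read $P_{4k-7}$ rather than $P_{4k-3}$.)
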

\begin{proof}
	We proceed by induction on $k$. When $k=2$, the statement holds by Lemma~\ref{P5lemma}. For $k\ge 3$, assume the statement holds for $k-1$, i.e., either a bad $P_{4k-3}$ can be obtained within $6k-12$ rounds, or a good $P_{4k-3}$ can be obtained within $6k-11$ rounds. The desired conclusion then follows by applying Corollary~\ref{maincor}.
\end{proof}

\begin{lemma}\label{P7lemma}
	Either Builder can create a bad $P_7$ within $9$ rounds, or a good $P_7$ within $10$ rounds.
\end{lemma}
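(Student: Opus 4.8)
The plan is to follow the pattern already established by Lemmas~\ref{P5lemma} and~\ref{P8lemma}: Builder first manufactures a compact \emph{seed} --- one of the units of Figure~\ref{fig:unit}, or, better, a short green path that turns up in the course of building such a unit --- and then grows the seed into a green $P_7$ by repeated use of Lemma~\ref{mainlemma} and Corollary~\ref{maincor}, which lengthen a green path by $4$. The budget to respect is a bad $P_7$ by round $9$ and a good $P_7$ by round $10$. Since Corollary~\ref{maincor} turns a good green $P_3$ into a good $P_7$ in at most six more rounds and a bad green $P_3$ into a good $P_7$ in at most seven more rounds (and into a bad $P_7$ in at most five more rounds), it is enough for Builder to obtain a good green $P_3$ within $4$ rounds, or a bad green $P_3$ within $3$ rounds; alternatively, a green $P_3$ together with a vertex-disjoint unit, produced within $7$ rounds (if the path is bad) or $8$ rounds (if it is good) in total, suffices through Lemma~\ref{mainlemma}.

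I would therefore have Builder execute the construction of Lemma~\ref{basiclemma} but halt at once if a green $P_3$ materialises. If Painter colours the initial $P_3$ with two green edges, Builder already holds a bad green $P_3$ after $2$ rounds. If the initial $P_3$ carries a red or blue edge and Builder's next move --- joining the two ends of the $P_3$ --- is coloured green, the resulting green $P_3$ has its far endpoint on that red/blue edge, so it is a good green $P_3$, obtained after $3$ rounds; the same happens, after $4$ rounds, if Builder instead attaches a fresh edge at the vertex of the $P_3$ carrying the non-green edge and that attachment is coloured green. In all of these cases Corollary~\ref{maincor} finishes within the $9$/$10$-round budget, and this policy automatically handles every unit that contains a cheap green $P_3$; in particular $G_6$ is never completed, because building $G_6$ forces Painter to colour four consecutive edges green and hence already yields a green $P_3$ after two rounds.

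The remaining, and genuinely hard, case --- the step I expect to be the main obstacle --- is the line in which Painter stubbornly blocks every green $P_3$ (colouring the opening $P_3$ with a red and a blue edge, or blocking the joining/attaching edges), so that Builder is left holding the unit $G_1$. The green subgraph of $G_1$ is a matching of size $2$, so $G_1$ is a poor seed: since $P_7$ has matching number $3$, two vertex-disjoint copies of $G_1$ can lie on a common $P_7$ through at most $3$ of their $4$ green edges, which would force at least $3$ further edges and break the budget. Consequently Builder must never commit to a second $G_1$. The way to prevent this is to exploit the doubly-saturated vertex $v_4$ of $G_1$, which is incident to both a red and a blue edge, so that every new edge at $v_4$ is forced green --- and, more generally, any edge joining two vertices that separately meet a red edge and a blue edge is forced green: Builder uses $G_1$ itself as the unit for Lemma~\ref{mainlemma} and builds the required disjoint green path by playing off such forced-green edges, harvesting, whenever Painter blocks an extension, the red or blue edge it creates in order to saturate the next vertex, so that the configuration needed by Lemma~\ref{mainlemma} is in place within $7$--$8$ rounds. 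Thus the proof is an adaptive, somewhat intricate case analysis; no tool beyond Lemmas~\ref{basiclemma} and~\ref{mainlemma}, Corollary~\ref{maincor}, and the forced-green behaviour of doubly-saturated vertices is needed, but the round-by-round bookkeeping across the branches is where the real work lies.
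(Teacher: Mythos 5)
Your reduction and the bookkeeping around Lemma~\ref{mainlemma} and Corollary~\ref{maincor} are fine (a bad green $P_3$ by round $3$, a good green $P_3$ by round $4$, or a green $P_3$ plus a disjoint unit by round $7$/$8$ would indeed suffice), and your identification of the residual hard case --- Painter steering Builder into the unit $G_1$, whose green edges form a matching --- is exactly right. (A small slip along the way: when the opening $P_3$ is one red and one blue edge, the forced-green edge joining its ends gives only a green $P_2$, not a green $P_3$; this branch lands directly in the $G_1$ case, consistent with your later discussion.) The problem is that the hard case is precisely where you stop at a sketch, and the round counts you assert there do not hold up. If Builder first completes $G_1$ ($4$ rounds) and then tries to manufacture a vertex-disjoint green $P_3$, a counting argument blocks the ``bad by round $7$'' branch outright: under Painter's priority strategy a new edge with both ends outside $G_1$ is coloured green only when it is adjacent to a red and a blue edge, and those must themselves be new edges (the red and blue edges of $G_1$ touch only $G_1$'s vertices); so two disjoint-from-$G_1$ green edges require at least two auxiliary non-green new edges, i.e.\ at least four new rounds, and no disjoint green $P_3$ of any kind exists by round $7$. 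That forces you to guarantee a \emph{good} disjoint green $P_3$ in exactly four further rounds, and Painter can dodge the natural forcing plan: for instance, when Builder plays the anchoring edge from a fresh vertex to $v_3$ (hoping for blue), Painter may answer green, handing Builder only a green path that overlaps $G_1$, after which Lemma~\ref{mainlemma} no longer applies and the straightforward continuations land at $11$ rounds for a good $P_7$. So the ``harvesting'' step, which you yourself flag as the main obstacle, is a genuine gap, not just unfinished bookkeeping.

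The paper closes this case by reversing the order of construction: Builder spends the first $3$ rounds on a star $K_{1,3}$, which either already contains a bad green $P_3$ (then Corollary~\ref{maincor} finishes) or yields a vertex $u_0$ incident to one red, one blue and one green edge, together with a red-anchored neighbour $u_2$. Only then does Builder build a unit ($4$ more rounds), and the pre-placed anchors make the final two or three connecting edges forced green, with the star's green edge $u_0u_1$ serving as part of the $P_7$; this is what keeps the total at $9$ or $10$. That ``anchors before the unit'' idea is the ingredient your proposal is missing in the $G_1$ branch.
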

\begin{proof}
	Builder first creates a star $K_{1,3}$ in $3$ rounds. If at least two of the edges are green, then a bad $P_3$ is formed within $3$ rounds. Applying Corollary~\ref{maincor} then completes the proof. Thus, it suffices to consider the remaining case where the $K_{1,3}$ contains at most one green edge. In order to avoid a red or blue $P_3$, the three edges of the $K_{1,3}$ must be red, blue, and green, respectively. Without loss of generality, assume $u_0u_1$ is green, $u_0u_2$ is red, and $u_0u_3$ is blue.

	By Lemma~\ref{basiclemma}, Builder can construct a unit in $4$ rounds that is vertex-disjoint from the $K_{1,3}$, with vertex labels as shown in Figure~\ref{fig:unit}.

	If the unit is $G_1$, Builder then draws the edges $u_0v_1$, $u_2v_2$, and $u_2v_4$, all of which must be green. This results in a good $P_7$ within $10$ rounds: $v_3v_2u_2v_4v_1u_0u_1$.

	If the unit is $G_4$, Builder draws the edges $u_0v_4$, $u_2v_2$, and $u_2v_4$, which must all be green. This yields a good $P_7$ within $10$ rounds: $v_3v_1v_2u_2v_4u_0u_1$.

	If the unit is $G_3$ and the edge $v_2v_4$ is blue, then Builder draws the edges $u_0v_4$, $u_2v_2$, and $v_2v_3$, which must all be green. This gives a good $P_7$ within $10$ rounds: $u_2v_2v_3v_1v_4u_0u_1$. If $v_2v_4$ is red, the same conclusion holds by symmetry.

	If the unit is $G_2$, $G_5$, or $G_6$, then Builder draws the edge $u_1u_2$. If $u_1u_2$ is green, then $u_0u_1u_2$ forms a green $P_3$, with $u_0$ incident to both a red and a blue edge. In this case, Builder can simply draw the edge $u_0v_3$ to obtain a good $P_7$ within $9$ rounds. If $u_1u_2$ is not green, it must be blue. 

	For $G_2$, Builder draws the edges $u_2v_3$ and $u_0v_2$, which must be green. This results in a good $P_7$ within $10$ rounds: $u_2v_3v_1v_4v_2u_0u_1$. For $G_5$ and $G_6$, Builder draws the edges $u_2v_3$ and $u_0v_4$, which must be green. This yields a good $P_7$ within $10$ rounds: $u_2v_3v_1v_2v_4u_0u_1$.
\end{proof}

\begin{theorem}\label{final3lemma} 
	For $k\ge 2$, either a bad $P_{4k-1}$ can be obtained within $6k-3$ rounds, or a good $P_{4k-1}$ can be obtained within $6k-2$ rounds.
\end{theorem}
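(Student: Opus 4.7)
The plan is to mirror the inductive arguments already used for Theorems \ref{finallemma} and \ref{final1lemma}, with Lemma \ref{P7lemma} furnishing the base case and Corollary \ref{maincor} driving the induction step. I would proceed by induction on $k$.

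For the base case $k=2$, the statement asks for a bad $P_7$ within $9$ rounds or a good $P_7$ within $10$ rounds, which is exactly the content of Lemma \ref{P7lemma}. For the inductive step, fix $k\ge 3$ and assume the statement holds for $k-1$: Builder can produce either a bad $P_{4k-5}$ within $6k-9$ rounds or a good $P_{4k-5}$ within $6k-8$ rounds. Apply Corollary \ref{maincor} to this green path to extend it to a $P_{4k-1}$. In the case the inductive path is good, part (1) of the corollary yields either a bad $P_{4k-1}$ in $(6k-8)+5=6k-3$ rounds or a good $P_{4k-1}$ in $(6k-8)+6=6k-2$ rounds. In the case the inductive path is bad, part (2) of the corollary yields either a bad $P_{4k-1}$ in $(6k-9)+5=6k-4\le 6k-3$ rounds or a good $P_{4k-1}$ in $(6k-9)+7=6k-2$ rounds. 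In every branch, one of the two conclusions of the theorem is achieved within the claimed bound, so the induction closes.

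The proof requires no new combinatorial ingredients beyond those already developed: all of the strategic work lives in Lemma \ref{P7lemma} and Corollary \ref{maincor}, and the inductive step is essentially a bookkeeping check. The one point worth noting is that the asymmetric costs in Corollary \ref{maincor} (good paths extend in $5$ or $6$ extra rounds, bad paths in $5$ or $7$) interact exactly so that the good-path bound $6k-2$ is matched uniformly, thanks to the corresponding one-round gap ($6k-9$ versus $6k-8$) between the bad and good cases in the inductive hypothesis. Consequently the main difficulty is nothing more than verifying that this arithmetic is tight; there is no genuine obstacle.
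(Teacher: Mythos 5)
Your proof is correct and follows exactly the same route as the paper: induction on $k$ with Lemma~\ref{P7lemma} as the base case and Corollary~\ref{maincor} supplying the inductive step. You spell out the arithmetic bookkeeping that the paper leaves implicit, but there is no substantive difference.
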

\begin{proof}
	We proceed by induction on $k$. When $k=2$, the statement holds by Lemma~\ref{P7lemma}. For $k\ge 3$, assume the statement holds for $k-1$, i.e., either a bad $P_{4k-5}$ can be obtained within $6k-9$ rounds, or a good $P_{4k-5}$ can be obtained within $6k-8$ rounds. The desired conclusion then follows by applying Corollary~\ref{maincor}.
\end{proof}

\begin{theorem}\label{final2lemma}
	For $k\ge 1$, either a bad $P_{4k-2}$ can be obtained within $6k-4$ rounds, or a good $P_{4k-2}$ can be obtained within $6k-3$ rounds.
\end{theorem}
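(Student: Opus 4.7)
The plan is to mirror the structure of Theorems~\ref{finallemma}, \ref{final1lemma}, and \ref{final3lemma}: induct on $k$, with the inductive step being an immediate arithmetic consequence of Corollary~\ref{maincor}. The only real content is the base case $k=1$, which must be verified by hand because Corollary~\ref{maincor} presumes a starting green path on at least two vertices and so cannot itself produce the initial $P_2$.

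For the base case, I would show that Builder forces a bad $P_2$ within $2$ rounds or a good $P_2$ within $3$ rounds. The plan: Builder draws an edge $e_1=v_1v_2$. If Painter colors $e_1$ green, then $e_1$ is already a bad $P_2$ (there are no red or blue edges), so we are done in $1$ round. Otherwise $e_1$ is red or blue; by symmetry say red. Builder then draws $e_2=v_2v_3$ to a new vertex $v_3$. Since Painter must avoid a red $P_3$, the edge $e_2$ is colored blue or green. If $e_2$ is green, then $e_2$ is a good $P_2$ because its endpoint $v_2$ is incident to the red edge $e_1$, and we are done in $2$ rounds. If $e_2$ is blue, Builder closes the triangle with $e_3=v_1v_3$; this edge is adjacent both to the red $e_1$ (at $v_1$) and to the blue $e_2$ (at $v_3$), so Painter cannot color it red or blue without creating a red or blue $P_3$. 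Hence $e_3$ is green, and $v_1$ being incident to the red $e_1$ makes $e_3$ a good $P_2$, achieved in exactly $3$ rounds.

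For the inductive step with $k\ge 2$, I would apply the statement to $k-1$, obtaining a bad $P_{4k-6}$ within $6k-10$ rounds or a good $P_{4k-6}$ within $6k-9$ rounds. Because $4k-6\ge 2$, Corollary~\ref{maincor} applies. Starting from a good $P_{4k-6}$, Builder produces a bad $P_{4k-2}$ in at most $5$ further rounds (total $\le 6k-4$) or a good $P_{4k-2}$ in at most $6$ further rounds (total $\le 6k-3$). Starting from a bad $P_{4k-6}$, Builder produces a bad $P_{4k-2}$ in at most $5$ further rounds (total $\le 6k-5\le 6k-4$) or a good $P_{4k-2}$ in at most $7$ further rounds (total $\le 6k-3$). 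In all four cases the claimed bound is met.

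There is no genuine obstacle here; the main thing to be careful about is the subcase analysis of the first three rounds, in particular noting that when $e_1$ is green Builder stops immediately so that the $P_2$ remains bad, and that when $e_2$ is blue the closing edge $e_3$ is forced green by the combined constraints of avoiding red and blue $P_3$'s. Once the base case is settled, the inductive step is the same routine arithmetic check that already appears in the three earlier theorems.
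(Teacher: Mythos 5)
Your proof is correct and follows essentially the same approach as the paper: induction on $k$, with the inductive step being a routine application of Corollary~\ref{maincor} and only the base case $k=1$ requiring a direct argument. The only cosmetic difference is in the base case, where you force the green edge by closing the triangle $v_1v_3$, whereas the paper adds a pendant edge from the middle vertex of the non-green $P_3$ to a fresh vertex; both rely on the same observation that the middle vertex is incident to one red and one blue edge, and both yield a good $P_2$ by round $3$.
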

\begin{proof}
	We proceed by induction on $k$. When $k=1$, Builder first constructs a $P_3$. If at least one edge of this $P_3$ is green, then a green $P_2$ is obtained within $2$ rounds. If none of the edges is green, Builder extends from an internal vertex of the $P_3$ by adding an edge, which must be green. This yields a good $P_2$ within $3$ rounds.

	For $k\ge 2$, assume the statement holds for $k-1$, i.e., either a bad $P_{4k-6}$ can be obtained within $6k-10$ rounds, or a good $P_{4k-6}$ can be obtained within $6k-9$ rounds. The conclusion then follows by applying Corollary~\ref{maincor}.
\end{proof}

\begin{theorem}\label{P3P4lemma}
	$\tilde{r}(P_3,P_3,P_3)\le 4$ and $\tilde{r}(P_3,P_3,P_4)\le 6$.
\end{theorem}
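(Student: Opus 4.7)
For the first inequality, $\tilde{r}(P_3,P_3,P_3)\le 4$, the plan is a pure pigeonhole: Builder spends all four rounds drawing edges $vu_1, vu_2, vu_3, vu_4$ incident to a common vertex $v$, so that among the four edges and three colors some two must share a color and automatically form a monochromatic $P_3$ through $v$.

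For the second inequality, $\tilde{r}(P_3,P_3,P_4)\le 6$, the plan is an explicit case analysis driven by the coloring of the opening pair $v_1v_2$ and $v_2v_3$. Assuming neither a red nor a blue $P_3$ has appeared, these two edges fall (up to interchanging red and blue) into three cases: (I) both green; (II) one red and one blue; (III) one green and one red, say $v_1v_2$ green and $v_2v_3$ red.

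In Case~(I), Builder plays $v_3v_4$; if green, a green $P_4$ appears already, so assume it is red. The edge $v_1v_4$ is then forced blue, because red would give a red $P_3$ and green a green $P_4$, producing a $C_4$ with consecutive sides green, green, red, blue. The diagonal $v_1v_3$ is forced green, closing a green triangle on $\{v_1,v_2,v_3\}$; the final move $v_2v_4$ then loses in every color, since red meets $v_3v_4$ at $v_4$, blue meets $v_1v_4$ at $v_4$, and green completes the green $P_4$ on $v_3,v_1,v_2,v_4$. In Case~(II), the round-$3$ edge $v_2v_4$ is forced green because both a red and a blue edge already meet $v_2$; Builder plays $v_4v_5$ at round~$4$ and, in each response, reaches within two further rounds a final edge that is simultaneously a red-threat, a blue-threat, and a green-$P_4$-completion (after $v_4v_5{=}$red use $v_5v_2$, which is forced green, then $v_3v_5$; after $v_4v_5{=}$green push via $v_5v_6$, then close with $v_6v_2$). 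In Case~(III), the edge $v_2v_4$ is blue or green; in the green subcase Builder plays $v_4v_5$ and follows with the analogous short sequence ($v_1v_5,v_3v_1$ when $v_4v_5$ is red, or $v_5v_2,v_3v_5$ when blue), while in the blue subcase Builder plays $v_4v_5$ and, when it is red, continues $v_1v_4$ (forced green) then $v_4v_6$, whereas when $v_4v_5$ is green the single move $v_2v_5$ already triple-threatens: red meets $v_2v_3$, blue meets $v_2v_4$, and green completes the path on $v_1,v_2,v_5,v_4$.

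The recurring engine, which one might call a \emph{twin-threat finish}, is to arrange a vertex $w$ incident to both an existing red edge and an existing blue edge, together with a green $P_3$ whose free end lies at $w$; the next edge out of $w$ to the opposite end of that green $P_3$ (or to a suitably chosen fresh vertex) is then simultaneously red-unsafe, blue-unsafe, and green-$P_4$-completing. The main obstacle I expect is Case~(III) with $v_2v_4$ blue: there the red threat is not yet positioned near $v_4$, so Builder must spend one extra forcing round to install it before the twin-threat finisher fits inside the six-round budget; verifying that this still closes in round~$6$ along every Painter response is the bookkeeping heart of the proof.
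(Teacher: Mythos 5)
Your bound $\tilde{r}(P_3,P_3,P_3)\le 4$ is obtained exactly as in the paper: Builder draws $K_{1,4}$, and pigeonhole on four edges in three colors gives a monochromatic $P_3$.

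For $\tilde{r}(P_3,P_3,P_4)\le 6$ you take a genuinely different route. The paper also begins with a $P_3$ but immediately grows it into a star $K_{1,3}$ in three rounds and then splits on a simple dichotomy: either the star already carries a green $P_3$ (then three edges from a leaf endpoint pigeonhole a green extension), or the star has one red, one blue, and one green edge (then a fourth spoke from the center is forced green, and the two edges $u_1u_4$, $u_2u_4$ pigeonhole a green completion). Your argument instead stops after the first two edges, splits into the three color patterns of the opening $P_3$, and plays an adaptive, move-by-move strategy in each branch, exploiting the ``twin-threat'' vertex that is already red-unsafe and blue-unsafe. I checked all branches and they close within the six-round budget; the only narration gap is in Case~III with $v_2v_4$ blue, where you should note explicitly that $v_4v_5$ cannot be blue (it would make a blue $P_3$ with $v_2v_4$), so only the red and green responses need handling. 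What the paper's approach buys is brevity and robustness -- two pigeonhole steps on a star replace six or so subcases -- while your approach buys nothing extra here beyond illustrating the twin-threat mechanism, which the paper itself deploys in its later unit/path-gluing lemmas. Both are correct.
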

\begin{proof}
	For the former, Builder constructs a star $K_{1,4}$ within $4$ rounds. To avoid a red or blue $P_3$, at least two of the edges must be green, which yields a green $P_3$.

	For the latter, Builder first creates a star $K_{1,3}$ within $3$ rounds. If it contains a green $P_3$, Builder extends one endpoint of this green $P_3$ by adding three new edges; among these, at least one must be green. This yields a green $P_4$ in at most $6$ rounds. If the $K_{1,3}$ does not contain a green $P_3$, then in order to avoid a red or blue $P_3$, the three edges of $K_{1,3}$ must consist of one red, one blue, and one green edge. Builder then adds another edge from the center of $K_{1,3}$, which must be green. Without loss of generality, denote the two green edges as $u_0u_1$ and $u_0u_2$, the red edge as $u_0u_3$, and the blue edge as $u_0u_4$. Next, Builder draws the edges $u_1u_4$ and $u_2u_4$. At least one of these must be green. By symmetry, assume $u_1u_4$ is green. Then $u_4u_1u_0u_2$ forms a green $P_4$. Hence, we conclude that $\tilde{r}(P_3,P_3,P_4)\le 6$.
\end{proof}

Combining \cref{finallemma}, \cref{final1lemma}, \cref{final3lemma}, \cref{final2lemma}, and \cref{P3P4lemma}, we obtain the upper bound in \cref{thm:main1}.\qed

\medskip

\noindent {\bf Remark}. In fact, for the cases where $\ell\not\equiv 0\pmod{4}$ and $\ell\ge 2$, or when $\ell=4$, there exists an alternative approach based on an unpublished result~\cite{Song2025}. In these cases, $N=\flo{3\ell/2}$. Builder treats red and blue as a single color, reducing the game to a two-color (non-green and green) online Ramsey game. Song, Wang, and Zhang~\cite{Song2025} proved that for $\ell\ge 2$, $\tilde{r}(K_{1,3},P_{\ell})=\flo{3\ell/2}$. Thus, Builder can force either a non-green $K_{1,3}$ or a green $P_\ell$ within $N$ rounds. In the former case, a red-blue colored $K_{1,3}$ necessarily contains a red $P_3$ or a blue $P_3$. This completes the proof.

\section{The upper bound of \cref{thm:main2}}\label{sec4}

In this section, we first present some preliminary constructions and establish several lemmas that allow us to obtain a longer green path or a green cycle by combining two vertex-disjoint green paths. We then consider four separate cases according to the residue of $\ell$ modulo $4$, and prove Theorems~\ref{c4k}, \ref{c4k+1}, \ref{c4k-2}, and \ref{c4k+3}, respectively. This completes the proof of the upper bound.

Throughout this section, we assume that no red copy of $P_3$ and no blue copy of $P_3$ appears in the graph; otherwise, Builder would have already won the game.

Before presenting the construction, we introduce three definitions that respectively describe two types of vertices, three types of green paths, and one type of cycle.

\begin{definition}
A vertex is called a \emph{good vertex} if it is incident to at least one non-green edge; it is called a \emph{better vertex} if it is incident to both a red edge and a blue edge.
\end{definition}

\begin{definition}
A green path is called a \emph{good path} if at least one of its endpoints is a good vertex; it is called a \emph{better path} if one endpoint is a good vertex and the other is a better vertex; it is called a \emph{best path} if both endpoints are better vertices.
\end{definition}

\begin{definition}
For $k\ge 2$, if a cycle $C_{k+1}$ contains a $P_3$ consisting of one red edge and one blue edge, and all the other edges in $C_{k+1}$ are green, then we denote such a cycle by $C^{\text{RB}}_{k+1}$.
\end{definition}

\begin{lemma}\label{cmainlemma} 
	Suppose there are two disjoint green paths $P_n$ and $P_k$. Then the following statements hold:
	\begin{enumerate}[label=(\arabic*).]
		\item For $n,k\ge 2$, if both $P_n$ and $P_k$ are good paths, then either a good $P_{n+k}$ is obtained in the next round, or a better $P_{n+k}$ is obtained within the next $3$ rounds.
		\item For $n,k\ge 2$, if $P_n$ is a good path, then either a good $P_{n+k}$ is obtained within the next $2$ rounds, or a better $P_{n+k}$ is obtained within the next $3$ rounds.
	    \item For $n\ge 3$ and $k\ge 2$, if $P_n$ is a better path, then a better $P_{n+k}$ is obtained in the next $2$ rounds.
	    \item For $n\ge 3$ and $k\ge 2$, Builder either obtains a green $P_{n+k}$ in the next round, a good $P_{n+k}$ within the next $2$ rounds, or a better $P_{n+k}$ within the next $4$ rounds.	
	\end{enumerate}
\end{lemma}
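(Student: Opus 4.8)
The plan is to have Builder repeatedly draw edges joining the endpoints of the two given green paths, exploiting the standing assumption that no red $P_3$ and no blue $P_3$ is ever created. Write $a_1,a_2$ for the endpoints of $P_n$ and $b_1,b_2$ for those of $P_k$. The basic tool is the observation that, since the red edges form a matching and the blue edges form a matching, an edge $uv$ is \emph{forced green} whenever $\{u,v\}$ already meets a red edge and also meets a blue edge; in particular, every edge incident to a better vertex must be colored green. The basic move is a \emph{link}, an edge $a_ib_j$: a green answer merges the two paths into a green $P_{n+k}$ whose endpoints are the two unused endpoints, while a non-green answer makes $a_i$ and $b_j$ good vertices, each acquiring that colour, which Builder later upgrades to a better vertex.

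For part (3), the simplest, one endpoint of $P_n$ is already a better vertex, so Builder has a forced-green link available at once; Builder draws $a_2b_1$, using the good endpoint $a_2$ and keeping the better endpoint $a_1$ free, and whatever Painter does — answer green and merge, or answer with the colour missing at $a_2$, which promotes $a_2$ to a better vertex — Builder can finish in one more forced-green link that keeps a better vertex at one end of the resulting $P_{n+k}$, two rounds in all. Parts (1) and (2) run the same scheme but Builder must first \emph{manufacture} a better endpoint. In part (1) both input paths are good, so after a single non-green link $a_2b_2$ all four endpoints are good; Builder then plays one edge between two endpoints carrying the same colour (hence not colourable that colour), which either merges the paths green or promotes an endpoint to a better vertex, and in the latter case a final forced-green link completes a better $P_{n+k}$ in three rounds. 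Part (2) is identical except that, having no hypothesis on $P_k$, Builder spends one extra probe: two non-green links at $a_2$ must carry different colours, so $a_2$ becomes a better vertex, after which one forced-green link from the good endpoint $a_1$ — chosen according to which endpoint of $P_k$ carries the colour opposite to $a_1$'s — finishes within three rounds, and a green answer to either probe finishes sooner with a good $P_{n+k}$.

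Part (4) carries no hypotheses and is the catch-all. Builder draws $a_1b_1$; a green answer ends the round-one case. Otherwise Builder draws $a_2b_2$; a green answer gives a good $P_{n+k}$ (endpoints $a_1,b_1$, both now good) in two rounds. Otherwise all four endpoints are good, and Builder finishes exactly as in the last stage of part (1): one edge to force some endpoint to pick up its missing colour and become a better vertex, then one forced-green link to merge with that better vertex at an end, yielding a better $P_{n+k}$ within four rounds. In every part, a green answer to a probe is never harmful: it either already completes the desired $P_{n+k}$, or it extends/merges a green path, in which case Builder restarts the argument on the sub-path of the correct length, whose relevant endpoint status is inherited.

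The step I expect to be the main obstacle is controlling the branch in which Painter answers the probing edges non-green for as long as possible. Because the red edges and the blue edges among the (at most four) links each form a matching, Painter can keep all four links non-green; the crux is that the first moment an endpoint is incident to both a red and a blue edge it becomes a better vertex and every further edge at it is forced green, so Builder must order the probes so that this happens before the round budget is exhausted and so that the completing forced-green edge is a genuine link between two endpoints rather than a chord to an interior vertex — a chord merges the paths into a $P_{n+k-1}$, not a $P_{n+k}$. Verifying this scheduling in every colour pattern, together with checking that the "green answer" sub-cases really do reduce to the earlier parts without overshooting the target length, is where essentially all of the case analysis lies.
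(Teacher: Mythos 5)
Your part (2) is essentially the paper's argument and is fine: two probes at the non-good endpoint $a_2$ either merge (good $P_{n+k}$ within $2$ rounds) or, being disjoint red/blue matchings, come back in different colours and make $a_2$ better, after which the forced-green edge from $a_1$ to the endpoint of $P_k$ carrying the opposite colour finishes in $3$ rounds. But parts (1), (3) and (4) have a genuine gap, and it is the same gap each time: you only analyse the branch where Painter answers non-green, while the dangerous branch is a \emph{green} answer at the wrong moment. In (1), your first edge joins the two non-good endpoints, so a non-green answer creates no better vertex; if your second edge (the ``same-colour'' probe) is then answered green, you get a merged $P_{n+k}$ at round $2$ whose two endpoints are merely good, which satisfies neither alternative of the statement (``good'' is only allowed in round $1$), and in the worst case no better vertex exists anywhere, so one further round cannot repair it, since Painter may simply answer green and a green edge never creates a better vertex. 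The paper avoids this by probing from a \emph{good} endpoint first (a non-green answer is forced to be the missing colour, instantly creating a better vertex) and saving the endpoint-to-endpoint edge for the last round, where either answer completes a better path. The same defect kills your part (4): your two probes $a_1b_1$, $a_2b_2$ are disjoint, so Painter can colour both red, no better vertex arises, and a green answer to your third edge yields a good $P_{n+k}$ only at round $3$, outside the guarantee; the paper instead plays both probes at the same vertex $u_1$ so that two non-green answers stack into a better vertex.

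In part (3) the green branch is also unhandled: after a green merge the far endpoint of $P_k$ need not be good, and no single ``forced-green link'' can change that, nor can a forced-green edge closing a cycle, since in the worst case no spanning path of that cycle has a good vertex adjacent to the better one. The paper's fix is exactly the device you explicitly reject: draw a chord from the free endpoint of $P_k$ to the \emph{second} vertex of $P_n$ and re-route, dropping the edge $u_1u_2$; this keeps all $n+k$ vertices (your remark that a chord only yields a $P_{n+k-1}$ overlooks that the chord is combined with the previously drawn link), and it is precisely why the hypothesis $n\ge 3$ appears in (3) and (4). Your closing claim that ``a green answer to a probe is never harmful'' because Builder can ``restart the argument'' is not available here: the round budgets are exact and are consumed downstream (e.g.\ in Lemma~\ref{mainthm}), so a good $P_{n+k}$ arriving one round late is a real failure, not a restartable state.
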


\begin{proof}
To prove statements (1) and (2), denote the endpoints of the green path $P_n$ by $u_1$ and $u_2$, and those of $P_k$ by $v_1$ and $v_2$. When $P_n$ is a good path, assume that $u_1$ is its good vertex; when $P_k$ is a good path, assume that $v_1$ is its good vertex. For the proofs of statements (3) and (4), we label all vertices of the two paths and write the green paths as $P_n=u_1u_2\cdots u_n$ and $P_k=v_1v_2\cdots v_k$. We now prove each of the four statements in turn.

\noindent {\bf Proof of (1).} Builder first constructs the edge $v_1u_2$. If this edge is colored green, then a good $P_{n+k}$ is obtained in $1$ round: $u_1P_nu_2v_1P_{k}v_2$. If the edge is colored non-green, then $u_2$ becomes a good vertex and $v_1$ becomes a better vertex. In this case, Builder proceeds to construct the edge $u_1v_1$, which must be green. Then, Builder constructs the edge $u_2v_2$: if this edge is non-green, the path $v_2P_kv_1u_1P_n u_2$ forms a better $P_{n+k}$; if $u_2v_2$ is green, the path $u_1P_n u_2v_2P_kv_1$ also forms a better $P_{n+k}$. The process takes at most $3$ rounds.

\noindent {\bf Proof of (2).} Builder first constructs the edges $u_2v_1$ and $u_2v_2$. If at least one of these edges is colored green, say $u_2v_1$, then a good $P_{n+k}$ is obtained in $2$ rounds: $u_1P_nu_2v_1P_{k}v_2$. If both $u_2v_1$ and $u_2v_2$ are colored non-green, then $u_2$ becomes a better vertex since it is incident to both a red and a blue edge. In this situation, since $u_1$ must connect to at least one of $v_1$ or $v_2$ via a green edge (say $v_1$), Builder constructs the edge $u_1v_1$, which must be green. This results in a better $P_{n+k}$ in $3$ rounds: $u_2P_nu_1v_1P_{k}v_2$.

\noindent {\bf Proof of (3).} Assume $P_n$ is a better path with $u_1$ as a good vertex and $u_n$ as a better vertex. Builder first constructs the edge $u_1v_1$. If it is non-green, then $u_1$ becomes a better vertex. Builder then constructs the edge $u_1v_k$, which must be green. This yields a better $P_{n+k}$ in $2$ rounds: $u_nP_nu_1v_kP_{k}v_1$. If $u_1v_1$ is green, Builder constructs the edge $v_ku_2$: if this edge is green, then the path $u_n\cdots u_2v_kP_kv_1u_1$ is a better $P_{n+k}$; if $v_ku_2$ is non-green, then the path $u_nP_nu_1v_1P_kv_k$ forms a better $P_{n+k}$. The process takes at most $2$ rounds.

\noindent {\bf Proof of (4).} Builder first constructs the edge $v_1u_1$. If it is green, then a green $P_{n+k}$ is obtained in $1$ round: $u_nP_nu_1v_1P_kv_k$. If $v_1u_1$ is non-green, Builder proceeds with the edge $u_1v_k$. If this edge is green, then a good $P_{n+k}$ is obtained in $2$ rounds: $u_nP_nu_1v_kP_kv_1$. If the edge $u_1v_k$ is non-green, then $v_1$ and $v_k$ are both good vertices, and $u_1$ becomes a better vertex. Builder then constructs the edge $v_ku_2$: if it is green, Builder then constructs the necessarily green edge $u_1u_n$, and a better $P_{n+k}$ is obtained in $4$ rounds: $v_1P_kv_ku_2P_{n-1}u_nu_1$; if $v_ku_2$ is non-green, Builder constructs the necessarily green edge $u_n v_k$, which also leads to a better $P_{n+k}$ in $4$ rounds: $u_1P_nu_nv_kP_kv_1$.
\end{proof}

\begin{lemma}\label{Tlemma} 
Suppose there exists a $C^{\text{RB}}_{k+1}$ and a disjoint green path $P_n$, where $k,n\ge 2$. Then Builder can obtain a better $P_{n+k+1}$ within $3$ rounds. In particular, if $P_n$ is a good path, then Builder can obtain a better $P_{n+k+1}$ within $2$ rounds.
\end{lemma}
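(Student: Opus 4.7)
The plan is to label the vertices of $C^{\text{RB}}_{k+1}$ as $w_0, w_1, \ldots, w_k$ with $w_0 w_1$ red, $w_1 w_2$ blue, and the remaining edges $w_2 w_3, w_3 w_4, \ldots, w_{k-1} w_k, w_k w_0$ all green. Thus the cycle already contains a green sub-path $w_2 w_3 \cdots w_k w_0$ on $k$ vertices; moreover $w_1$ is a better vertex, while $w_0$ and $w_2$ are good vertices. Write $P_n = u_1 u_2 \cdots u_n$. The central observation driving the strategy is that any new edge Builder draws at $w_1$ is forced green, since $w_1$ already carries both a red and a blue edge; more generally, as soon as any vertex becomes incident to both colors, every subsequent edge at it is forced green.

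The target better $P_{n+k+1}$ must use all $n+k+1$ vertices and should be of the form $w_1 - u_\alpha - P_n - u_\beta - x - \cdots - y$, where $\{\alpha,\beta\}=\{1,n\}$ and $\{x,y\}=\{w_0,w_2\}$, with the middle segment tracing the cycle's green sub-path from $x$ to $y$. Builder therefore needs two new green edges: one at $w_1$ (always free) connecting to an endpoint of $P_n$, and one joining the other endpoint of $P_n$ to $w_0$ or $w_2$.

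\textbf{Good case (2 rounds).} Assume without loss of generality that $u_1$ is a good vertex. If $u_1$ is incident to a red edge, Builder draws $u_1 w_2$; this is forced green because $u_1$ blocks red and $w_2$ blocks blue. If instead $u_1$ is incident only to a blue edge, Builder draws $u_1 w_0$, forced green by the symmetric reason. Builder then draws $u_n w_1$, forced green since $w_1$ is better. The resulting green $P_{n+k+1}$ has $w_1$ (better) and the unused element of $\{w_0, w_2\}$ (good) as its endpoints, as required.

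\textbf{General case (3 rounds).} If $P_n$ is good, apply the 2-round strategy. Otherwise $P_n$ is bad: neither $u_1$ nor $u_n$ is incident to a non-green edge. Builder plays as follows. In round 1, draw $u_n w_0$. Painter cannot color this red, as that would extend $w_0 w_1$ to a red $P_3$, so the edge is green or blue. If it is green, then in round 2 draw $u_1 w_1$ (forced green), producing $w_1 - u_1 - u_2 - \cdots - u_n - w_0 - w_k - \cdots - w_2$, a better $P_{n+k+1}$ in 2 rounds. If it is blue, then $w_0$ becomes a better vertex (now carrying both the red edge $w_0 w_1$ and the blue edge $u_n w_0$). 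In round 2 draw $u_1 w_0$, which is forced green because $w_0$ is better; in round 3 draw $u_n w_1$, forced green because $w_1$ is better. The path $w_1 - u_n - u_{n-1} - \cdots - u_1 - w_0 - w_k - \cdots - w_2$ uses all $n+k+1$ vertices and has endpoints $w_1$ (better) and $w_2$ (good), yielding a better $P_{n+k+1}$ in 3 rounds.

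The main obstacle I anticipate lies in the bad-$P_n$ case: a priori Painter might hope to deny Builder a green link between $P_n$ and the cycle's green path by consistently refusing to color extension edges green. The resolution is that Painter's only non-green option in round 1 is blue, and this very choice upgrades $w_0$ into a better vertex. Together with the pre-existing better vertex $w_1$, the pair $\{w_0, w_1\}$ then serves as two anchors at which all remaining edges Builder draws are automatically forced green -- precisely the edges needed to thread $P_n$ together with the cycle's green path into the desired better $P_{n+k+1}$.
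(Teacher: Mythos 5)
Your proof is correct and takes essentially the same approach as the paper: thread $P_n$ into the cycle's green sub-path by forcing a green edge from one endpoint of $P_n$ to the red-incident endpoint of that sub-path (which blocks red), and a green edge from the other endpoint of $P_n$ to the better vertex of the cycle (which blocks both colors). The only surface difference is in the general case, where the paper draws both candidate edges from the red-incident vertex to the two endpoints of $P_n$ up front and observes that at least one must be green, while you draw just one and branch adaptively on its color, using the blue response to upgrade that vertex to a better vertex; both yield the same 3-round bound and the good case is handled identically.
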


\begin{proof}
Let the cycle $v_1\cdots v_kv_{k+1}v_1$ be a $C^{\text{RB}}_{k+1}$. Without loss of generality, assume that the edges $v_1v_{k+1}$ and $v_{k+1}v_k$ are red and blue, respectively, so that $v_1\cdots v_k$ forms a green path $P_k$. Let the endpoints of $P_n$ be $u_1$ and $u_2$, and if $P_n$ is a good path, assume without loss of generality that $u_1$ is its good vertex.

Builder draws the edges $v_1u_1$ and $v_1u_2$. Since $v_1$ is a good vertex, at least one of these edges must be green. Without loss of generality, assume $v_1u_1$ is green. Then Builder draws the edge $u_2v_{k+1}$, which must also be green. Hence, within $3$ rounds, we obtain a better $P_{n+k+1}$: $v_{k+1}u_2P_nu_1v_1P_kv_k$.

If $P_n$ is a good path, it is easy to see (without drawing) that either $u_1v_1$ or $u_1v_k$ has to be green. Without loss of generality, assume that $u_1v_1$ is green. Then Builder draws the edges $u_1v_1$ and $u_2v_{k+1}$, both of which must be green. Therefore, within $2$ rounds, Builder obtains a better $P_{n+k+1}$: $v_{k+1}u_2P_nu_1v_1P_kv_k$.
\end{proof}

\begin{lemma}\label{cbasiclemma} 
	For $i\in [6]$, the graphs $G_i$ are illustrated in Figure~\ref{fig:c-unit}. Builder can construct $G_1$ or $G_2$ within $3$ rounds; otherwise, after one additional round, one of $G_3$, $G_4$, $G_5$, or $G_6$ will inevitably be constructed.
\end{lemma}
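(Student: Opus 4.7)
The approach mirrors the proof of Lemma~\ref{basiclemma}: Builder draws a short initial sequence of edges, and the argument branches on Painter's color choices, exploiting the standing assumption that no red $P_3$ and no blue $P_3$ is allowed to appear. Because every red edge and every blue edge must lie in a graph of maximum degree at most $1$ in its own color, any two non-green edges sharing a vertex must be of different colors, and this rigidity drives the casework.

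First, I would have Builder construct a three-edge configuration in the opening rounds, most naturally a triangle or a $P_3$ together with an edge completing a triangle at one endpoint. Under the no-monochromatic-$P_3$ rule, the colorings of these three edges split into a small number of cases according to how many of the edges are green and, among the non-green ones, how the red and blue edges are distributed. In the favorable cases this three-edge graph already realizes $G_1$ or $G_2$ (the two configurations among $G_1,\ldots,G_6$ that use only three edges and whose coloring patterns are compatible with every Painter response), and Builder wins the construction phase within three rounds.

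If neither $G_1$ nor $G_2$ has been obtained, then Painter's coloring of the initial three edges must lie in the complementary set of patterns. Builder then plays a carefully chosen fourth edge designed to use the existing adjacencies: because at least one of the three initial edges is already non-green and shares a vertex with another edge, Painter's freedom on the new edge is highly restricted by the no-$P_3$ constraint, so each admissible color for the fourth edge yields a four-edge labeled graph isomorphic to exactly one of $G_3$, $G_4$, $G_5$, or $G_6$. In particular, the fourth edge is chosen so that its endpoints sit on the already-drawn vertices in a way that forbids any coloring outside the target list.

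The main obstacle is the bookkeeping in the enumeration: one has to check that for every permissible Painter-coloring of the first three edges that fails to produce $G_1$ or $G_2$, the specific fourth edge Builder plays has every legal coloring matching one of the four remaining target configurations, with no escape pattern left over. As in the proof of Lemma~\ref{basiclemma}, this reduces to a finite case analysis that can be verified by comparing each coloring pattern against Figure~\ref{fig:c-unit}; the delicate part is making Builder's fourth-round choice uniform enough across the remaining cases so that Painter's options are genuinely exhausted by $\{G_3,G_4,G_5,G_6\}$.
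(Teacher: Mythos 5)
The proposal does not pin down a correct strategy, and the concrete construction it offers would fail. You suggest that Builder first lays down ``a triangle or a $P_3$ together with an edge completing a triangle at one endpoint.'' A triangle is a fixed non-adaptive three-edge choice, and Painter can simply color all three of its edges green: the result is a green $C_3$, which is not among $G_1,\ldots,G_6$, and no single fourth edge added to a green triangle produces any of them either (each of $G_3,\ldots,G_6$ contains a degree-$3$ vertex together with a specific pattern of green and non-green edges that a green $C_3$ plus one pendant edge cannot realize). The ``$P_3$ plus a triangle-completing edge'' variant is the same triangle described differently, so it has the same problem.

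What the proposal misses is that the paper's third edge is \emph{adaptive} and never completes a triangle in the first three rounds: Builder first draws a two-edge $P_3$, and then the third edge depends on Painter's colors. If the two edges are red-and-blue or contain exactly one green, the third edge goes from the \emph{center} of the $P_3$ to a brand-new vertex (forced green gives $G_1$, and otherwise the situation is ready for the fourth edge); if both edges are green, the third edge goes from an \emph{endpoint} of the $P_3$ to a new vertex (green gives $G_2$, non-green sets up $G_5$ or $G_6$). The triangle-closing edge only ever appears in the fourth round, and only in the ``exactly one green'' branch, where it distinguishes $G_3$ from $G_4$. Beyond this structural mismatch, your write-up defers the entire enumeration to ``a finite case analysis that can be verified,'' whereas that enumeration is the whole content of the lemma; as written, the proposal neither identifies the correct third-round move nor checks that each Painter response lands in $\{G_1,\ldots,G_6\}$.
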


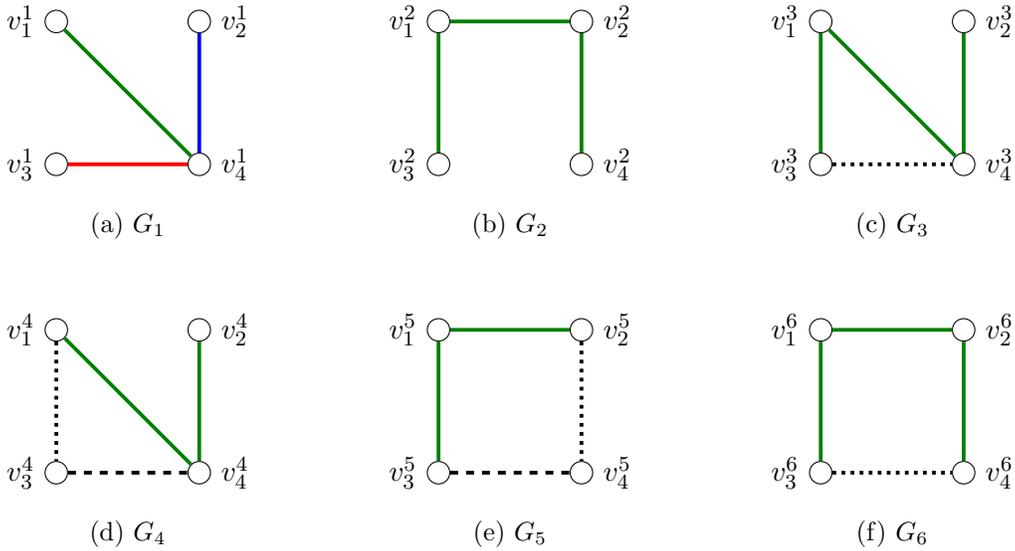
\begin{figure}[ht]
	\centering
	\definecolor{darkgreen}{rgb}{0.0, 0.5, 0.0} 
	\begin{subfigure}{0.3\textwidth}
		\centering
		\begin{tikzpicture}[scale=1.9, 
			vertex/.style={circle, draw, fill=white, inner sep=3pt},
			every label/.style={scale=1, black}]
			
			\node[vertex, label=left:$v_1^1$] (v1) at (0, 1) {};
			\node[vertex, label=right:$v_2^1$] (v2) at (1, 1) {};
			\node[vertex, label=left:$v_3^1$] (v3) at (0, 0) {};
			\node[vertex, label=right:$v_4^1$] (v4) at (1, 0) {};
			
			\draw[line width=0.5mm, darkgreen] (v1) -- (v4);
			\draw[line width=0.5mm, blue] (v2) -- (v4);
			\draw[line width=0.5mm, red] (v3) -- (v4);
		\end{tikzpicture}
		\caption{$G_1$}
	\end{subfigure}
	\begin{subfigure}{0.3\textwidth}
		\centering
		\begin{tikzpicture}[scale=1.9, 
			vertex/.style={circle, draw, fill=white, inner sep=3pt},
			every label/.style={scale=1, black}]
			
			\node[vertex, label=left:$v_1^2$] (v1) at (0, 1) {};
			\node[vertex, label=right:$v_2^2$] (v2) at (1, 1) {};
			\node[vertex, label=left:$v_3^2$] (v3) at (0, 0) {};
			\node[vertex, label=right:$v_4^2$] (v4) at (1, 0) {};
			
			\draw[line width=0.5mm, darkgreen] (v1) -- (v2);
			\draw[line width=0.5mm, darkgreen] (v1) -- (v3);
			\draw[line width=0.5mm, darkgreen] (v2) -- (v4);
	
		\end{tikzpicture}
		\caption{$G_2$}
	\end{subfigure}
	\begin{subfigure}{0.3\textwidth}
		\centering
		\begin{tikzpicture}[scale=1.9, 
			vertex/.style={circle, draw, fill=white, inner sep=3pt},
			every label/.style={scale=1, black}]
			
			\node[vertex, label=left:$v_1^3$] (v1) at (0, 1) {};
			\node[vertex, label=right:$v_2^3$] (v2) at (1, 1) {};
			\node[vertex, label=left:$v_3^3$] (v3) at (0, 0) {};
			\node[vertex, label=right:$v_4^3$] (v4) at (1, 0) {};
			
			\draw[line width=0.5mm, darkgreen] (v1) -- (v3);
			\draw[line width=0.5mm, darkgreen] (v2) -- (v4);
			\draw[line width=0.5mm, dotted] (v3) -- (v4);
			\draw[line width=0.5mm, darkgreen] (v1) -- (v4);
		\end{tikzpicture}
		\caption{$G_3$}
	\end{subfigure}

	\vspace{2em}
		\begin{subfigure}{0.3\textwidth}
		\centering
		\begin{tikzpicture}[scale=1.9, 
			vertex/.style={circle, draw, fill=white, inner sep=3pt},
			every label/.style={scale=1, black}]
			
			\node[vertex, label=left:$v_1^4$] (v1) at (0, 1) {};
			\node[vertex, label=right:$v_2^4$] (v2) at (1, 1) {};
			\node[vertex, label=left:$v_3^4$] (v3) at (0, 0) {};
			\node[vertex, label=right:$v_4^4$] (v4) at (1, 0) {};
			
			\draw[line width=0.5mm, dotted] (v1) -- (v3);
			\draw[line width=0.5mm, darkgreen] (v2) -- (v4);
			\draw[line width=0.5mm, dashed] (v3) -- (v4);
			\draw[line width=0.5mm, darkgreen] (v1) -- (v4);
		\end{tikzpicture}
		\caption{$G_4$}
	\end{subfigure}
	\begin{subfigure}{0.3\textwidth}
		\centering
		\begin{tikzpicture}[scale=1.9, 
			vertex/.style={circle, draw, fill=white, inner sep=3pt},
			every label/.style={scale=1, black}]
			
			\node[vertex, label=left:$v_1^5$] (v1) at (0, 1) {};
			\node[vertex, label=right:$v_2^5$] (v2) at (1, 1) {};
			\node[vertex, label=left:$v_3^5$] (v3) at (0, 0) {};
			\node[vertex, label=right:$v_4^5$] (v4) at (1, 0) {};
			
			\draw[line width=0.5mm, darkgreen] (v1) -- (v2);
			\draw[line width=0.5mm, darkgreen] (v1) -- (v3);
			\draw[line width=0.5mm, dotted] (v2) -- (v4);
			\draw[line width=0.5mm, dashed] (v3) -- (v4);
		\end{tikzpicture}
		\caption{$G_5$}
	\end{subfigure}
	\begin{subfigure}{0.3\textwidth}
		\centering
		\begin{tikzpicture}[scale=1.9, 
			vertex/.style={circle, draw, fill=white, inner sep=3pt},
			every label/.style={scale=1, black}]
			
			\node[vertex, label=left:$v_1^6$] (v1) at (0, 1) {};
			\node[vertex, label=right:$v_2^6$] (v2) at (1, 1) {};
			\node[vertex, label=left:$v_3^6$] (v3) at (0, 0) {};
			\node[vertex, label=right:$v_4^6$] (v4) at (1, 0) {};
			
			\draw[line width=0.5mm, darkgreen] (v1) -- (v2);
			\draw[line width=0.5mm, darkgreen] (v1) -- (v3);
			\draw[line width=0.5mm, darkgreen] (v2) -- (v4);
			\draw[line width=0.5mm, dotted] (v3) -- (v4);
		\end{tikzpicture}
		\caption{$G_6$}
	\end{subfigure}
	\caption{Graphs Builder can create in 4 rounds, where $\{\text{dotted}, \text{dashed}\} = \{\text{red}, \text{blue}\}$.}
	\label{fig:c-unit}
\end{figure}

\begin{proof}	
	Note that in graphs $G_3$ and $G_6$, the dashed edge represents an edge that may be either red or blue. In graphs $G_4$ and $G_5$, the two dashed edges represent a red edge and a blue edge, respectively. Also observe that the vertex labels in the graphs are provided solely for convenience in the subsequent proof and are not related to the order in which the edges are drawn.
	
	Builder first constructs a $P_3$, and the edges of this $P_3$ fall into one of the following three cases: one red and one blue edge, exactly one green edge, or both edges green.
	\begin{enumerate}[label=(\arabic*).]
		\item If the two edges of the $P_3$ are one red and one blue, Builder connects the internal vertex of this $P_3$ to a new vertex. This new edge must be green, resulting in the graph $G_1$.
		\item If the $P_3$ contains exactly one green edge, Builder connects the internal vertex of this $P_3$ to a new vertex, forming an edge $e_1$. If $e_1$ is not green, then $G_1$ is obtained. If $e_1$ is green, Builder then adds an edge between the two endpoints of the original $P_3$. If this edge is green, the resulting graph is $G_3$; if it is not green, the resulting graph is $G_4$.
		\item If both edges of the $P_3$ are green, let the endpoints of $P_3$ be $x$ and $y$. Builder then introduces a new vertex $w$ and draws the edge $wy$. If $wy$ is green, this results in graph $G_2$. If $wy$ is non-green, Builder then draws the edge $wx$. If $wx$ is non-green, graph $G_5$ is obtained; if $wx$ is green, graph $G_6$ is obtained.\qedhere
	\end{enumerate}
\end{proof}

\begin{lemma}\label{mainthm}
Builder can construct either $G_1\cup G_1$ within 6 rounds (where $G_1$ is shown in Figure~\ref{fig:c-unit} (a)), or a green $P_8$ within 7 rounds, or a good $P_8$ within 9 rounds, or a better $P_8$ within 11 rounds.
\end{lemma}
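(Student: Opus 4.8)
The plan is to have Builder build two vertex-disjoint gadgets by applying \cref{cbasiclemma} twice — a gadget costs $3$ rounds when it turns out to be $G_1$ or $G_2$ and $4$ rounds otherwise — and then to splice the two gadgets into a green $P_8$ using the combining \cref{cmainlemma} and \cref{Tlemma}. If both gadgets are copies of $G_1$, Builder has produced $G_1\cup G_1$ after only $3+3=6$ rounds, which is one of the four outcomes allowed by the statement, and he stops; so we may assume at least one gadget lies in $\{G_2,\dots,G_6\}$ and argue by cases on the two gadget types.

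The main case is that both gadgets contain a green $P_4$, i.e.\ both lie in $\{G_2,G_3,G_6\}$ (the $P_4$ being a good path inside $G_3$ and $G_6$). If both are $G_2$, the two green $P_4$'s are available after $6$ rounds and \cref{cmainlemma}(4) yields a green $P_8$ in one more round; if one is $G_2$ and the other is $G_3$ or $G_6$, after $7$ rounds we have a good $P_4$ and a green $P_4$ and \cref{cmainlemma}(2) gives a good $P_8$ within two more rounds; if both are $G_3$ or $G_6$, after $8$ rounds we have two good $P_4$'s and \cref{cmainlemma}(1) gives a good $P_8$ in one more round or a better $P_8$ within three more. In each subcase the windows $7$, $9$, $11$ are respected. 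If a gadget is $G_4$, Builder first plays $v_2^4v_3^4$, which is forced green because $v_3^4$ meets both a red and a blue edge; this turns $G_4$, in one extra round, into the green path $v_3^4v_2^4v_4^4v_1^4$, a better $P_4$, and then \cref{cmainlemma}(3) splices it with the partner gadget's green $P_k$ into a better $P_8$ within two further rounds — a count keeps the total at most $11$. If a gadget is $G_5$, Builder uses its spare better vertex $v_4^5$ to force green $v_1^5v_4^5$ and $v_4^5x$ ($x$ new), extending the green $P_3$ of $G_5$ to the good $P_4$ $v_2^5v_1^5v_4^5x$ in two extra rounds, and then proceeds as above; the pairs $G_4\cup G_4$, $G_4\cup G_5$, $G_5\cup G_5$ are treated similarly, with the unused better vertices appended to the ends of the spliced path to force a best (hence better) $P_8$.

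The hard part will be the remaining case, where exactly one gadget is a $G_1$ and its partner lies in $\{G_2,\dots,G_6\}$, so Builder cannot invoke the $G_1\cup G_1$ escape. Here the supply of green edges is smallest and a naive splicing keeps landing one vertex short of $P_8$, so Builder must lean on $G_1$'s two ``engines'': the better vertex $v_4^1$, at which any drawn edge is forced green, and the edge $v_2^1v_3^1$, which is itself forced green (since $v_2^1$ is blue-incident and $v_3^1$ red-incident) and brings out a $C^{\text{RB}}_3$ on $\{v_2^1,v_3^1,v_4^1\}$. The moves then have to be sequenced so that whenever Painter declines a green edge she is forced to create one further good or better vertex, which in turn becomes the next extension point; checking — over the handful of possibilities for the partner gadget — that the path so obtained reaches $P_8$ within the $9$-round (good) and $11$-round (better) budgets is the crux of the argument.
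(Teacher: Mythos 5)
Your overall architecture is the same as the paper's (build two vertex-disjoint gadgets via \cref{cbasiclemma}, then splice them using \cref{cmainlemma} and \cref{Tlemma}), but as written the proof has two genuine gaps. First, your treatment of $G_5$ breaks the round budget. You spend $2$ extra rounds turning the green $P_3$ of $G_5$ into a good $P_4$ before splicing; with partner $G_2$ this means $4+3+2=9$ rounds are already used, and the best available splice, \cref{cmainlemma}(2), then delivers a good $P_8$ only at round $11$ (budget $9$) or a better $P_8$ at round $12$ (budget $11$); with partner $G_3$ or $G_6$ the counts are $11$ and $13$, again over budget. The efficient observation, which you mention in passing but never use, is that $G_5$ \emph{is} a $C^{\text{RB}}_4$, so \cref{Tlemma} splices it directly onto the partner's green $P_4$ in at most $3$ rounds (at most $2$ when that $P_4$ is good), giving totals of at most $10$; the pair $(G_5,G_5)$ needs its own short forced-green construction (the paper does it in $8+3=11$ rounds). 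Your handling of $G_4\cup G_4$, $G_4\cup G_5$, $G_5\cup G_5$ (``splice, then append the unused better vertices'') is also not within budget as described: splicing the two green $P_3$'s first and appending afterwards costs too much, and with only one extension the spliced path has $7$ vertices; the workable order is to append the better vertices first (forced green) and then use a forced-green edge into a better endpoint to connect, as the paper does.

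Second, you explicitly leave the case of $G_1$ paired with a gadget in $\{G_2,\dots,G_6\}$ unproved, calling it the crux. This is not a routine verification: the $C^{\text{RB}}_3$ you extract from $G_1$ only yields, via \cref{Tlemma}, a $P_{n+3}$, which is one vertex short when the partner supplies a $P_4$, so one must also recruit $v_1^1$ through the green edge $v_1^1v_4^1$. The paper needs a separate claim (given $G_1$ and a disjoint green $P_n$, a better $P_{n+4}$ in at most $5$ rounds, $4$ if $P_n$ is good) together with ad hoc forced-green constructions for $(G_1,G_4)$ and $(G_1,G_5)$, where the partner has no green $P_4$ until an extra edge is played. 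Until both of these points are filled in with explicit move sequences and round counts, the lemma is not established.
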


\begin{proof} 
Builder applies Lemma~\ref{cbasiclemma} twice to construct two vertex-disjoint copies of graphs from Figure~\ref{fig:c-unit}, denoted as $(G_i,G_j)$. Without loss of generality, we may assume $1\leq i\leq j \leq 6$ by symmetry. We consider all possible configurations of $(G_i,G_j)$ as follows.

\setcounter{case}{0}
\begin{case}
$i=6$.
\end{case}
In this case, we only need to consider $(G_6,G_6)$. By applying Lemma~\ref{cmainlemma} (1), Builder either obtains a good $P_8$ within 9 rounds or a better $P_8$ within 11 rounds.

\begin{case}
$i=5$.
\end{case}
For the pair $(G_5,G_6)$, observe that $G_5$ is a $C^{\text{RB}}_4$, so Lemma~\ref{Tlemma} applies, and Builder obtains a better $P_8$ within 10 rounds. For $(G_5,G_5)$, we relabel the vertices in one copy of $G_5$ by replacing $v_i^5$ with $u_i^5$ for $i\in [4]$. It is easy to see (without drawing) that $v_2^5$ can be connected by a green edge to either $u_2^5$ or $u_3^5$; without loss of generality, assume it is $u_2^5$. Then Builder draws edges $v_2^5u_2^5$, $v_3^5u_4^5$, and $v_4^5u_3^5$, all of which must be green. This yields a best $P_8$ in 11 rounds: $v_4^5u_3^5u_1^5u_2^5v_2^5v_1^5v_3^5u_4^5$. Clearly, a best $P_8$ is also a better $P_8$.

\begin{case}
$i=4$.
\end{case}
For both $(G_4,G_5)$ and $(G_4,G_6)$, Builder first draws the edge $v_2^4v_3^4$, which must be green. Then, Lemma~\ref{Tlemma} applies to $(G_4,G_5)$, and Lemma~\ref{cmainlemma} (3) applies to $(G_4,G_6)$, both yielding a better $P_8$ within 11 rounds.

For the pair $(G_4,G_4)$, we relabel the vertices of one copy of $G_4$ by replacing $v_i^4$ with $u_i^4$ for $i\in [4]$. Builder then draws the edges $v_2^4v_3^4$, $u_2^4u_3^4$, and $v_1^4u_3^4$, all of which must be green. This results in a better $P_8$ within 11 rounds: $u_1^4u_4^4u_2^4u_3^4v_1^4v_4^4v_2^4v_3^4$.

\begin{case}
$i=3$.
\end{case}
For $(G_3,G_3)$, $(G_3,G_5)$, and $(G_3,G_6)$, we apply Lemma~\ref{cmainlemma} (1), Lemma~\ref{Tlemma}, and Lemma~\ref{cmainlemma} (1), respectively. For the case $(G_3,G_4)$, Builder draws the edge $v_3^4v_2^4$, which must be green, and then applies Lemma~\ref{cmainlemma} (3). In all these cases, either a good $P_8$ is constructed within 9 rounds or a better $P_8$ is constructed within 11 rounds.

\begin{case}
$i=2$.
\end{case}
For $(G_2,G_2)$, $(G_2,G_3)$, $(G_2,G_5)$, and $(G_2,G_6)$, we apply Lemma~\ref{cmainlemma} (4), Lemma~\ref{cmainlemma} (2), Lemma~\ref{Tlemma}, and Lemma~\ref{cmainlemma} (2), respectively, which proves the claim. For the case $(G_2,G_4)$, Builder draws the edges $v_2^4v_3^4$ and $v_3^4v_3^2$, both of which must be green. This yields a good $P_8$ in 9 rounds: $v_4^2v_2^2v_1^2v_3^2v_3^4v_2^4v_4^4v_1^4$.

\begin{case}
    $i=1$.
\end{case}
We first establish the following claim.
\begin{claim}\label{G1}
    Given $G_1$ and a vertex-disjoint green path $P_n$ with $n\ge 2$, Builder can obtain a better $P_{n+4}$ within 5 rounds. In particular, if $P_n$ is a good path, then a better $P_{n+4}$ can be obtained within 4 rounds.
\end{claim}

\begin{proof}
    Let $u$ and $v$ be the two endpoints of the green path $P_n$. If $P_n$ is a good path, we may assume without loss of generality that $u$ is a good vertex.
    
    Builder first draws the edge $v_1^1v_3^1$. If this edge is green, Builder then draws $v_3^1v_2^1$, which must be green as well. Thus, the path $v_4^1v_1^1v_3^1v_2^1$ forms a better $P_4$. Applying Lemma~\ref{cmainlemma} (3), this path can be combined with $P_n$ in two additional rounds to form a better $P_{n+4}$. The process takes 4 rounds.

    If $v_1^1v_3^1$ is not green, Builder draws the edges $v_2^1u$ and $v_2^1v$. At least one of them must be green; without loss of generality, suppose $v_2^1u$ is green. Then Builder draws $v_2^1v_3^1$ and $v_4^1v$, both of which must be green. This results in a better $P_{n+4}$ within 5 rounds: $v_3^1v_2^1uP_n vv_4^1v_1^1$.

    In the special case where the green $P_n$ is a good path, without loss of generality, assume that the good vertex $u$ is incident to a red edge. Note that $v_3^1$ already becomes a better vertex. Builder then draws the edges $v_2^1v_3^1$, $v_2^1u$, and $v_4^1v$, all of which must be green. This yields a better $P_{n+4}$ in 4 rounds: $v_3^1v_2^1uP_n vv_4^1v_1^1$.
\end{proof}

For the pair $(G_1,G_1)$, a disjoint union of two copies of $G_1$ can be obtained within 6 rounds.

For the pairs $(G_1,G_2)$, $(G_1,G_3)$, and $(G_1,G_6)$, Claim~\ref{G1} applies, and in each case a better $P_8$ can be obtained within 11 rounds.

For the pair $(G_1,G_4)$, it is easy to see (without drawing) that one of the edges $v_1^4v_3^1$ and $v_1^4v_2^1$ must be green. Without loss of generality, assume $v_1^4v_3^1$ is green. Then Builder draws the edges $v_2^1v_3^1$, $v_2^4v_3^4$, $v_3^1v_1^4$, and $v_3^4v_1^1$, all of which must be green. Consequently, a better $P_8$ is obtained within 11 rounds: $v_4^1v_1^1v_3^4v_2^4v_4^4v_1^4v_3^1v_2^1$.

Finally, for the pair $(G_1,G_5)$, it is easy to see (without drawing) that one of the edges $v_3^1v_2^5$ and $v_3^1v_3^5$ must be green. Without loss of generality, assume $v_3^1v_3^5$ is green. Builder then draws the edges $v_3^5v_3^1$, $v_3^1v_2^1$, $v_2^1v_4^5$, and $v_4^5v_1^1$, all of which must be green. Thus, a better $P_8$ is constructed within 11 rounds: $v_4^1v_1^1v_4^5v_2^1v_3^1v_3^5v_1^5v_2^5$.
\end{proof}

\begin{lemma}\label{G1andG1}
Suppose we are given $G_1\cup G_1$ (where $G_1$ is shown in Figure~\ref{fig:c-unit} (a)). Then within the next 6 rounds, Builder can construct a better $P_8$. Moreover, Builder can determine the color of the non-green edge incident to the good endpoint of this $P_8$.
\end{lemma}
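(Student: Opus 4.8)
The plan is to spend one of the six rounds on a single probing edge joining the only two vertices of $G_1\cup G_1$ that lie on no non-green edge, and then to assemble all eight vertices into a green $P_8$ using only edges whose colour is already forced. Write the first copy of $G_1$ as $v_1^1,v_2^1,v_3^1,v_4^1$ as in Figure~\ref{fig:c-unit} (a), and the second copy as $u_1,u_2,u_3,u_4$, where $u_j$ plays the role of $v_j^1$; so $v_4^1$ and $u_4$ are better vertices, $v_2^1$ and $u_2$ meet a blue edge, $v_3^1$ and $u_3$ meet a red edge, and $v_1^1$ and $u_1$ meet only a green edge. Since Painter creates no red or blue $P_3$, every new edge incident to a better vertex must be coloured green, and so must every new edge joining a red-meeting vertex to a blue-meeting vertex; I will call edges of these two kinds \emph{forced}.

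Builder's first move is the edge $v_1^1u_1$, and the argument splits on its colour. If it is green, then $v_4^1v_1^1u_1u_4$ is a green $P_4$ between the two better vertices, and one checks that the four forced edges $u_3v_2^1$, $v_2^1v_3^1$, $v_3^1u_2$, $u_2v_4^1$ complete it to the green path $u_3\,v_2^1\,v_3^1\,u_2\,v_4^1\,v_1^1\,u_1\,u_4$; this is a $P_8$ with endpoints the better vertex $u_4$ and the good vertex $u_3$, and it costs $5$ rounds. If $v_1^1u_1$ is non-green, then by the red/blue symmetry (which also swaps $v_2^1\leftrightarrow v_3^1$ and $u_2\leftrightarrow u_3$) we may assume it is red, so that $v_1^1$ and $u_1$ have become red-meeting; then the five forced edges $v_3^1v_2^1$, $v_2^1u_3$, $u_3v_4^1$, $v_1^1u_2$, $u_2u_1$ yield the green path $v_3^1\,v_2^1\,u_3\,v_4^1\,v_1^1\,u_2\,u_1\,u_4$, a $P_8$ with endpoints the better vertex $u_4$ and the good vertex $v_3^1$, and it costs $6$ rounds. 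In the first case the unique non-green edge at the good endpoint is the red edge $u_3u_4$; in the second it is the red edge $v_3^1v_4^1$ (a blue edge in the mirrored sub-case). In all cases Builder has watched this colour appear, so he knows it; and since each post-probe edge is forced by edges already present in $G_1\cup G_1$ together with the probe edge alone, the order of the remaining moves is immaterial.

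I expect the only genuine difficulty to be the idea behind the probe. In $G_1\cup G_1$ the two tips $v_1^1$ and $u_1$ have exactly the same pair of forced-green neighbours, namely $\{v_4^1,u_4\}$, so any spanning green path built solely from forced edges must use one of $v_1^1,u_1$ as an endpoint, and that gives only a \emph{good} $P_8$, not a better one; drawing $v_1^1u_1$ repairs this whatever Painter does, since a green answer supplies the $P_4$ above while a non-green answer turns both tips into good vertices and thereby creates new forced edges through them. A secondary point worth recording — and the reason the lemma claims only that Builder can \emph{determine} (rather than dictate) the colour at the good endpoint — is that a short independent-set argument on $P_8$ shows that after a blue probe that endpoint is necessarily blue-meeting, so the colour is pinned down by Painter's response, which Builder merely observes. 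One should also note that $6$ rounds is tight here: a non-green probe leaves only the two original green edges available, and a green $P_8$ has seven edges in all.
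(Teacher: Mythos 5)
Your construction itself is sound: both of your forced-edge paths check out, and within $6$ rounds you do obtain a better $P_8$ whose good endpoint's non-green color Builder can observe. The gap is in the ``moreover'' clause, which you have read too weakly. In this paper ``Builder can determine the color'' means Builder can \emph{choose in advance} whether the good endpoint of the resulting $P_8$ is incident to a red or to a blue edge; this is exactly how Lemma~\ref{G1andG1} is invoked in Theorems~\ref{c4k} and~\ref{c4k+3}: when the good endpoint of the already-built good $P_{4k-8}$ meets a red edge, the good endpoint of the $P_8$ ``can be made incident to a blue edge,'' and conversely, so that the two paths can be joined into a green cycle in exactly $2$ further rounds (the edge between the two good endpoints is then adjacent to both a red and a blue edge and is forced green). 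If Builder merely learns the color after the fact, Painter can make both good endpoints red-meeting, the connecting edge is no longer forced, and the closing step may cost a third round, which overruns the budgets $6k+1$ and $6k+5$. So the statement you have proved is strictly weaker than the one the paper needs.

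Moreover, this is not a presentational issue: with your probe $v_1^1u_1$ the color decision genuinely passes to Painter. Both endpoints of that edge meet only green edges, so Painter may answer red or blue freely; and, as your own counting shows, after a non-green answer the seven path edges must consist of the two original green edges plus five forced-green new edges, and your parity/independent-set argument then pins the good endpoint to the \emph{same} color as the probe. Hence Builder cannot, within $6$ rounds and keeping your first move, guarantee (say) a blue-meeting good endpoint. The paper sidesteps this by probing inside one copy of $G_1$: its first edge is $v_3^1v_1^1$ (respectively $v_2^1v_1^1$ for the mirrored goal), whose non-green reply is forced to be blue (respectively red) because a red reply would complete a red $P_3$ through $v_3^1v_4^1$; in both branches of that strategy the good endpoint of the final $P_8$ meets the color Builder selected beforehand. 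Your argument would need to be reworked along these lines to support the applications in Section~\ref{sec4}.
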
	
\begin{proof}
For convenience, we relabel the vertices in one copy of $G_1$ by replacing $v_i^1$ with $u_i^1$ for $i\in [4]$.

We now describe a strategy for Builder to obtain a better $P_8$ with the good endpoint incident to a blue edge within the next 6 rounds.

Builder first draws the edge $v_3^1v_1^1$. If $v_3^1v_1^1$ is green, then Builder proceeds by drawing the edges $v_3^1v_2^1$, $v_2^1u_3^1$, $u_3^1u_2^1$, and $v_4^1u_1^1$, all of which must be green. Thus, within the next 5 rounds, a better $P_8$ with the good endpoint incident to a blue edge is formed: $u_4^1u_1^1v_4^1v_1^1v_3^1v_2^1u_3^1u_2^1$.

If $v_3^1v_1^1$ is not green, then it must be blue. Builder then draws the edges $v_3^1u_2^1$, $u_2^1u_3^1$, $u_3^1v_2^1$, $v_2^1u_4^1$, and $v_4^1u_1^1$, all of which must be green. In this case, within the next 6 rounds, a better $P_8$ with the good endpoint incident to a blue edge is constructed: $v_3^1u_2^1u_3^1v_2^1u_4^1u_1^1v_4^1v_1^1$.

By the symmetry in the coloring of the edges in $G_1$, it is easy to see that Builder can similarly construct a better $P_8$ with the good endpoint incident to a red edge within the next 6 rounds.
\end{proof}

\begin{lemma}\label{low Connection Lemma}
Suppose there is a green path $P_k$: $v_1v_2\cdots v_k$ with $k\ge 6$. Then a green cycle $C_k$ can be constructed in at most 5 rounds. In particular, if $P_k$ is a good path and there is no edge between its endpoints, then a green $C_k$ can be constructed in at most 4 rounds.
\end{lemma}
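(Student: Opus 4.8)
The plan is to run a P\'osa-type rotation argument, resting on three elementary consequences of the standing assumption that no red $P_3$ and no blue $P_3$ ever appears. (i) If Builder draws the (not-yet-present) edge joining the two endpoints of a green $P_m$, Painter either colors it green, closing a green $C_m$ at once, or colors it non-green, in which case both endpoints become incident to an edge of that color. (ii) If a green $P_m$ has one endpoint incident to a red edge and the other endpoint incident to a blue edge, and the edge between them is absent, then when Builder draws it Painter is forced to color it green (red would create a red $P_3$ at the first endpoint, blue a blue $P_3$ at the second), producing a green $C_m$ in one round. (iii) If an endpoint of a green $P_m$ is a better vertex, every absent edge drawn from it is forced green, so Builder may rotate safely at that endpoint; and drawing a forced-green chord from an endpoint of $v_1v_2\cdots v_m$ to an internal vertex $v_i$ yields a new green $P_m$ on the same vertex set whose moving endpoint is $v_{i-1}$ (or $v_{i+1}$, if one rotates from the other end). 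Since $k\ge 6$ there are always enough internal vertices to choose the rotation target so that the edge Builder will need afterwards is absent; any chord that is already present is harmless, being either green (a shortcut) or non-green (which only makes an endpoint better sooner). In particular, if $v_1v_k$ is already present Builder is either already finished or already in the non-green case of (i) without having spent a round.

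For the general bound I would first have Builder draw $v_1v_k$; assume it is red, so $v_1$ and $v_k$ are red-incident. Builder next draws $v_2v_k$, which cannot be red (a red $P_3$ would appear at $v_k$). If $v_2v_k$ is blue then $v_k$ is a better vertex, and Builder draws $v_1v_{k-1}$, which cannot be red: if it is green, then $v_kv_{k-1}v_1v_2\cdots v_{k-2}$ is a green $P_k$ with the better endpoint $v_k$, so the forced-green edge $v_kv_{k-2}$ closes a green $C_k$ in $4$ rounds; if it is blue, then $v_1$ is also a better vertex, and Builder performs one forced-green rotation at $v_k$ (via $v_kv_{k-3}$, whose moving endpoint is the so-far untouched vertex $v_{k-2}$) and then draws the forced-green edge $v_1v_{k-2}$, closing a green $C_k$ in $5$ rounds. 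If instead $v_2v_k$ is green, then $v_1v_2v_kv_{k-1}\cdots v_3$ is a green $P_k$ with red-incident endpoint $v_1$; Builder draws $v_1v_3$, which cannot be red, getting a green $C_k$ in $3$ rounds if it is green, and otherwise ($v_1v_3$ blue) $v_1$ becomes a better vertex, so one forced-green rotation at $v_1$ yields a green $P_k$ with endpoints $v_k$ (red-incident) and $v_3$ (blue-incident), whereupon (ii) closes a green $C_k$ via $v_kv_3$ in a total of $5$ rounds.

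For the ``in particular'' statement, one endpoint, say $v_1$, is already a good vertex, which I may assume is red-incident, and $v_1v_k$ is absent. Builder draws $v_1v_k$; it cannot be red, so either it is green and a green $C_k$ appears in one round, or it is blue, making $v_1$ a better vertex and $v_k$ blue-incident. This is exactly the configuration reached after the first drawn edge in the blue sub-case above, with the roles of $v_1$ and $v_k$ and of the two non-green colors interchanged; since $v_1$ is already better, the remaining analysis runs with one fewer round and delivers a green $C_k$ in at most $4$ rounds.

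The part I expect to require the most care is keeping the round count tight along the branches where an edge Builder draws early is colored green: such a move creates no new better vertex and at first glance costs a wasted round. The way out is observation (ii) --- a green $P_k$ with one red-incident endpoint and one blue-incident endpoint closes in a single round, with no better endpoint needed --- combined with choosing, for each rotation, an internal target that keeps the eventual closing edge absent. Verifying that such targets always exist is precisely where the hypothesis $k\ge 6$ (rather than $k\ge 5$) enters, and this bookkeeping is the only genuinely case-dependent portion of the argument.
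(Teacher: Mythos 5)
Your proposal is correct and takes essentially the same approach as the paper: probe the closing edge $v_1v_k$, then exploit the fact that a vertex already incident to both a red and a blue edge has every further edge forced green, rerouting the path with one or two crossing chords; your chord choices ($v_2v_k$, then $v_1v_{k-1}$ or $v_1v_3$, plus one rotation chord) differ from the paper's ($v_1v_4$, $v_3v_k$, $v_1v_5$, $v_4v_k$, \dots) but yield the same round counts, and the distinctness checks indeed use $k\ge 6$. The only looseness is the phrase identifying the ``in particular'' situation with ``the configuration reached after the first drawn edge in the blue sub-case'' (it is really the mirrored configuration after the second drawn edge, minus the extra blue edge at $v_2$, which the continuation never uses), but the three-round continuation does transfer, so the $4$-round bound stands.
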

\begin{proof}
Builder first draws the edge $v_1v_k$. If $v_1v_k$ is green, then a green $C_k$ is obtained in 1 round. If $v_1v_k$ is not green, Builder proceeds by drawing the edge $v_1v_4$. If $v_1v_4$ is also not green, then the edges $v_1v_5$ and $v_4v_k$, which must be green, are drawn. Thus, a green $C_k$ is formed in 4 rounds: $v_1v_5\cdots v_kv_4v_3v_2v_1$.

If $v_1v_4$ is green, Builder continues by drawing the edge $v_3v_k$. If $v_3v_k$ is also green, then a green $C_k$ is formed in 3 rounds: $v_kv_3v_2v_1v_4\cdots v_k$. If $v_3v_k$ is not green, then Builder draws the edges $v_1v_3$ and $v_2v_k$, which must be green. Hence, a green $C_k$ is constructed in 5 rounds: $v_kv_2v_1v_3\cdots v_k$.

Now consider the case where $P_k$ is a good path and there is no edge between its endpoints. Without loss of generality, assume that $v_1$ is a good vertex. Builder first draws the edge $v_1v_k$. If $v_1v_k$ is green, a green $C_k$ is obtained in 1 round.

If $v_1v_k$ is not green, then $v_1$ becomes a better vertex. Builder draws the edge $v_3v_k$. If $v_3v_k$ is green, Builder then draws the edge $v_1v_4$, which must be green. Hence, a green $C_k$ is obtained in 3 rounds: $v_3v_2v_1v_4\cdots v_kv_3$. If $v_3v_k$ is not green, Builder draws the edges $v_1v_3$ and $v_2v_k$, both of which must be green. Thus, a green $C_k$ is formed in 4 rounds: $v_2v_1v_3v_4\cdots v_kv_2$.
\end{proof}

\begin{lemma}\label{Connection Lemma}
Let $P_n$ and $P_k$ be two vertex-disjoint green paths with $n,k\ge 3$. Then the following statements hold:
\begin{enumerate}[label=(\arabic*).]
    \item If both $P_n$ and $P_k$ are good paths, then a green $C_{n+k}$ can be constructed within the next $5$ rounds.
    \item If $P_n$ is a good path, then a green $C_{n+k}$ can be constructed within the next $6$ rounds.
    \item A green $C_{n+k}$ can be constructed from $P_n$ and $P_k$ within the next $7$ rounds.
    \item If $P_n$ is a better path, then a green $C_{n+k}$ can be constructed within the next $3$ rounds.
\end{enumerate}
\end{lemma}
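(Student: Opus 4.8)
My plan for all four parts is the same two-step strategy: first use Lemma~\ref{cmainlemma} to splice $P_n$ and $P_k$ into a single green path carrying as much ``good/better'' structure as the hypotheses permit, and then close that path into a green cycle, either by Lemma~\ref{low Connection Lemma} or, when the spliced path is a better path, by a short direct argument that exploits the fact that every new edge at a better vertex is forced green. I would dispatch part (4) first and directly. Write the better path $P_n=u_1\cdots u_n$ with $u_1$ good and $u_n$ better, and $P_k=v_1\cdots v_k$. Builder draws $u_1v_1$; if it is green, then $v_k\cdots v_1u_1\cdots u_n$ is a green $P_{n+k}$ whose endpoint $u_n$ is a better vertex, so drawing $u_nv_k$ (forced green) closes a green $C_{n+k}$ in two rounds. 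If $u_1v_1$ is non-green, Painter must have used the colour complementary to the non-green edge already at $u_1$, so $u_1$ becomes a better vertex; Builder then draws $u_1v_k$ (forced green), obtaining a green $P_{n+k}$ with the better endpoint $u_n$ and the now-good endpoint $v_1$, and finally $u_nv_1$ (forced green) closes a green $C_{n+k}$ in three rounds. None of these edges pre-exists, since each is incident to an endpoint of $P_k$ or to $u_n$.

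For parts (1), (2), (3) I would invoke Lemma~\ref{cmainlemma}(1), (2), (4) respectively. In the branch where the lemma produces a green or good $P_{n+k}$ — within $1$, $2$, and $1$ rounds — Builder closes it via Lemma~\ref{low Connection Lemma}: in (1) and (2) the merged path is a good path whose endpoints are non-adjacent (the edges added during Lemma~\ref{cmainlemma} all meet an interior vertex of the final path), so the ``in particular'' clause finishes in at most $4$ more rounds, for totals $5$ and $6$; in (3) the general clause finishes in at most $5$ more rounds, for a total at most $6$ (and at most $6$ again if that path happens to be good). In the branch where Lemma~\ref{cmainlemma} produces a better $P_{n+k}$ — within $3$, $3$, and $4$ rounds — Builder closes it in at most $2$ more rounds by the argument of the next paragraph, giving totals $5$, $5$, $6$; and in the sub-case of Lemma~\ref{cmainlemma}(1) where the connecting edge $u_1v_1$ came out green, the spliced path is in fact already a green cycle, so no further round is needed.

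The delicate point — and the main obstacle — is closing a better $P_{n+k}$ when Lemma~\ref{cmainlemma}'s construction has already laid a (necessarily non-green) chord between its two endpoints: one cannot simply join the endpoints, and a careless reroute orphans a single vertex with no forced-green edge available. The fix is to trace through the construction. In every such case the better $P_{n+k}$, written $q_1\cdots q_m$ with $m=n+k$, has a vertex $w$ that is a better vertex (either the other endpoint or an interior vertex inherited from the good vertex of the absorbed path), and moreover the two endpoints $p,p'$ of the shorter original path have been forced to carry non-green edges of complementary colours, so the still-undrawn edge $pp'$ is forced green. Builder draws $pp'$ and one edge from $w$ to the vertex two steps from the relevant end, both forced green, and routes the Hamilton cycle in the ``splice'' pattern $q_1q_2\cdots q_tq_mq_{m-1}\cdots q_{t+1}q_1$ (or its mirror image) for the appropriate index $t$, covering all $m$ vertices. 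The hypotheses $n,k\ge 3$ are exactly what guarantee that the interior vertices used here exist, are distinct, and are not already adjacent to $w$; the rest — verifying which edges Painter is forced to colour green and that the displayed cycles are genuinely Hamiltonian — is routine bookkeeping.
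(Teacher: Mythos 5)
Your argument is correct in substance and reaches all four bounds, but it is organized differently from the paper. The paper proves each part of the lemma directly: it draws the connecting edge, invokes Lemma~\ref{low Connection Lemma} when that edge is green, and otherwise gives bespoke short edge sequences that produce the cycle outright (e.g.\ in part (1) the final reroute $u_1u_n$, $v_1u_{n-1}$, $v_ku_n$), never passing through an intermediate ``better path''. You instead black-box Lemma~\ref{cmainlemma} to merge the two paths and then close the resulting path, which buys modularity but forces you to confront exactly the obstacle you flag: in every better-path outcome of Lemma~\ref{cmainlemma} the two endpoints of the merged path are already joined by a non-green edge. Your closing mechanism (delete one path edge, add the forced-green edge $pp'$ between the endpoints of one original path together with a second forced-green edge at a better vertex, and reroute in the splice pattern) is the right idea, and I checked that in each of the relevant sub-cases a valid choice exists within your budgets (totals $5,5,6$ for the better-path branches, $5,6,6$ for the green/good branches, $3$ for part (4)). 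Two caveats: your selectors are stated inaccurately -- the correct pair $p,p'$ is determined by which original path acquired the complementary non-green edges during the merge (not by which path is shorter), and the second edge goes from the better endpoint of the merged path to the neighbour, along the merged path, of the interior endpoint $p$ (not to ``the vertex two steps from the relevant end''); with the wrong choice (e.g.\ using $u_1u_2$ in the case of the path $v_2P_kv_1u_1P_nu_2$) the companion edge is not forced green. Also, your side remark that the spliced path is ``already a green cycle'' whenever $u_1v_1$ is green holds only in the sub-case where $u_2v_2$ is also green; in the other sub-case the endpoints are joined by the non-green edge $u_2v_2$ and you need the two-round closing -- which your accounting covers anyway. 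Finally, both your round counts and the paper's rely on the same implicit assumption that the auxiliary edges to be drawn (in particular the edge between the endpoints of the merged good path when the $4$-round clause of Lemma~\ref{low Connection Lemma} is invoked) are not already on the board, so you are on equal footing with the paper there.
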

\begin{proof}
Let $P_n$ be denoted by $u_1u_2\cdots u_n$ and $P_k$ by $v_1v_2\cdots v_k$. If $P_n$ (respectively $P_k$) is a good path, assume without loss of generality that $u_n$ (respectively $v_k$) is a good vertex.

\noindent {\bf Proof of (1).} Builder draws the edge $v_1u_n$. If $v_1u_n$ is green, then Lemma~\ref{low Connection Lemma} applies, and a green $C_{n+k}$ is obtained within $5$ rounds. If $v_1u_n$ is not green, then $u_n$ becomes a better vertex. Builder then draws the edge $v_1u_1$. If $v_1u_1$ is green, Builder continues by drawing the edge $u_n v_k$, which must be green. Thus, a green $C_{n+k}$ is obtained in $3$ rounds: $v_1u_1P_n u_n v_kP_kv_1$. If $v_1u_1$ is not green, Builder draws the edges $u_1u_n$, $v_1u_{n-1}$, and $v_ku_n$, all of which must be green. Hence, a green $C_{n+k}$ is formed in $5$ rounds: $v_1u_{n-1}P_{n-1} u_1u_n v_kP_kv_1$.

\noindent {\bf Proof of (2).} Builder draws the edge $v_1u_n$. If $v_1u_n$ is green, then Lemma~\ref{low Connection Lemma} applies, and a green $C_{n+k}$ is obtained within $6$ rounds. If $v_1u_n$ is not green, then $u_n$ becomes a better vertex. Builder draws the edge $v_ku_n$, which must be green. Lemma~\ref{low Connection Lemma} again applies, so a green $C_{n+k}$ is obtained within $6$ rounds.

\noindent {\bf Proof of (3).} Builder draws the edge $v_1u_n$. If $v_1u_n$ is green, then by Lemma~\ref{low Connection Lemma}, a green $C_{n+k}$ can be obtained within $6$ rounds.  
If $v_1u_n$ is not green, Builder proceeds by drawing $v_1u_1$. If $v_1u_1$ is green, then Lemma~\ref{low Connection Lemma} still applies, yielding a green $C_{n+k}$ within $6$ rounds.  
If $v_1u_1$ is not green, Builder draws the edge $u_n v_k$. If $u_n v_k$ is green, then the edges $u_1u_n$ and $v_1u_{n-1}$, both necessarily green, are drawn, resulting in a green $C_{n+k}$ in $5$ rounds:  
$u_1\cdots u_{n-1}v_1\cdots v_ku_n u_1$.  
If $u_n v_k$ is not green, Builder draws the edges $v_1v_k$, $u_1u_n$, $u_1v_{k-1}$, and $v_ku_2$, all of which must be green. Thus, a green $C_{n+k}$ is formed in $7$ rounds:  
$u_2\cdots u_nu_1v_{k-1}\cdots v_1v_ku_2$.

\noindent {\bf Proof of (4).} Assume $u_1$ is the better vertex. Builder draws the edges $u_n v_k$ and $u_n v_1$. At least one of these must be green; without loss of generality, suppose $v_ku_n$ is green. Builder then draws the edge $v_1u_n$, which must also be green. Clearly, a green $C_{n+k}$ is obtained in $3$ rounds: $u_1\cdots u_nv_k\cdots v_1u_1$.
\end{proof}

\begin{theorem}\label{c4k}
For all $k\ge 4$, we have $\tilde{r}(P_3,P_3,C_{4k}) \leq 6k+1$.
\end{theorem}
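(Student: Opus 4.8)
The plan is to reduce the bound $\tilde{r}(P_3,P_3,C_{4k})\le 6k+1$ to building a single long green path with good end structure and then closing it in one move. Concretely, I would have Builder produce, within $6k$ rounds, a green $P_{4k}$, say $v_1v_2\cdots v_{4k}$, that is a \emph{better path} in the terminology of the paper — so that one of its endpoints, say $v_{4k}$, is a \emph{better vertex}, incident to both a red and a blue edge — while maintaining throughout the invariant that no edge has ever been drawn between the two current endpoints. Given such a path, Builder draws $v_1v_{4k}$: Painter can colour it neither red (a red $P_3$ would appear at $v_{4k}$) nor blue (a blue $P_3$ at $v_{4k}$), so it is forced green, completing a green $C_{4k}$ in round $6k+1$. (If it turns out to be more economical to finish from two green paths instead, the same conclusion is available from Lemma~\ref{Connection Lemma} (4), which closes a better path together with a disjoint green path into a green cycle in $3$ rounds, or from Lemma~\ref{low Connection Lemma}; I would keep whichever gives the tightest count.)

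To build the better $P_{4k}$ within $6k$ rounds I would argue by induction on $k$, maintaining the invariant ``after $6j$ rounds Builder holds a better $P_{4j}$ whose two endpoints are non-adjacent''. For the base case (a small value suffices since $k\ge 4$ leaves ample room; $j=2$ already works) invoke Lemma~\ref{mainthm}: within $6$ rounds Builder reaches either a better $P_8$ (within $11$ rounds — done), or $G_1\cup G_1$ (within $6$ rounds, after which Lemma~\ref{G1andG1} produces a better $P_8$ in $6$ further rounds, $12$ total), or a green/good $P_8$ (within $7$/$9$ rounds), and in the latter two cases Builder appends a disjoint copy of $G_1$ (whose centre is a better vertex) and splices it on via Lemma~\ref{cmainlemma} to upgrade to a better path, checking the count. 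For the inductive step, Builder uses Lemma~\ref{cbasiclemma} to construct a vertex-disjoint unit from Figure~\ref{fig:c-unit} in $3$ or $4$ rounds. Units $G_2,G_3,G_6$ already contain a green $P_4$; $G_4$ carries an off-path better vertex, so one forced-green edge turns it into a better $P_4$; and $G_5=C^{\text{RB}}_4$ is absorbed directly through Lemma~\ref{Tlemma}. In each of these cases Lemma~\ref{cmainlemma} (3) (or Lemma~\ref{Tlemma} for $G_5$) splices the new piece onto the current better $P_{4(k-1)}$ in $2$ rounds, keeping a better path and restoring the non-adjacency invariant, for a total of $6(k-1)+4+2=6k$ rounds — with a round to spare whenever the unit was completed in $3$.

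The one genuinely delicate ingredient — and where I expect the real work to lie — is the unit $G_1$: it is merely a green $P_2$ together with a red and a blue edge at a single vertex, so a naive splice adds only $2$ path-vertices at a cost of $3+2$ rounds, which cannot be iterated; equivalently $G_1$ ``wastes'' two non-green edges per green edge kept, whereas the $6k+1$ budget tolerates only about one wasted edge per two cycle edges. The remedy is to treat the better vertex that $G_1$ manufactures (indeed, the first two edges Builder draws in Lemma~\ref{cbasiclemma} already create it) as an asset rather than a loss, since every future edge at a better vertex is forced green. Accordingly I would never splice a lone $G_1$; instead, whenever the unit construction yields $G_1$, Builder continues until two disjoint copies are available and applies Lemma~\ref{G1andG1}, which turns $G_1\cup G_1$ into a better $P_8$ and, crucially, lets Builder dictate the colour of the non-green edge at that $P_8$'s good endpoint — so that endpoint can be joined to the good endpoint of the current better path with the two incident non-green colours chosen opposite, forcing the connecting edge green and splicing in all $8$ new vertices in one round. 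Making the resulting round-count provably stay $\le 6k$ — amortizing the rounds banked by every $3$-round unit and by the fact that $G_1\cup G_1$ is reached after only $6$ rounds against the extra rounds spent on each $G_1\cup G_1$-block, and verifying the non-adjacency invariant at every splice — is the technical heart of the proof; the remaining work is a routine check of the six unit configurations against the bound $6k+1$.
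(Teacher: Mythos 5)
Your reduction at the very end is fine: if Builder really had a better $P_{4k}$ with non-adjacent endpoints after $6k$ rounds, the closing edge at the better endpoint is forced green and a green $C_{4k}$ appears in round $6k+1$. The genuine gap is the invariant itself, i.e.\ the inductive step ``from a better $P_{4j}$ in $6j$ rounds to a better $P_{4(j+1)}$ in $6(j+1)$ rounds''. Painter can answer the opening $P_3$ of \emph{every} unit of Lemma~\ref{cbasiclemma} with one red and one blue edge, so every unit Builder gets is $G_1$ (3 rounds, a single green edge). A lone $G_1$ cannot be spliced within budget (even Claim~\ref{G1} needs $4$ further rounds onto a good path, i.e.\ $3+4=7$ rounds for $4$ new vertices against a budget of $6$), and your proposed remedy --- wait for two copies of $G_1$ and apply Lemma~\ref{G1andG1} --- costs $3+3+6=12$ rounds for a better $P_8$ plus at least one more round to join it to the current path, i.e.\ $13$ rounds per $8$ new vertices against a budget of $12$. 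Since Painter can make every unit a $G_1$, there are no ``banked'' rounds from $3$-round $G_2$'s to amortize against; the one-round deficit per $G_1$-pair accumulates, and the total exceeds $6k+1$ once $k$ is moderately large. So the part you yourself flag as the technical heart is not just delicate --- with the ingredients you cite it is arithmetically impossible, and the invariant would need a genuinely different growth mechanism (or must be abandoned).

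This is also exactly where your route diverges from the paper. The paper never maintains a better (or even good) endpoint during the long growth phase: it grows a green or good $P_{4k-8}$ at the amortized rate of $6$ rounds per $4$ vertices using the Section~3 units (Theorem~\ref{finallemma}), whose $4$-round construction always yields at least a green $P_3$/$P_4$ worth of structure, and it pays for endpoint quality only once, in a constant-size terminal gadget (the $P_8$ of Lemma~\ref{mainthm}, upgraded via Lemma~\ref{G1andG1} when Painter plays the $G_1\cup G_1$ line), after which Lemma~\ref{Connection Lemma} closes the two disjoint paths into the cycle; the $2$--$7$ rounds this costs are a one-time constant absorbed by the slack, not a per-step overhead. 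If you want to salvage your write-up, the cleanest fix is to adopt that shape: use Theorem~\ref{finallemma} for the bulk of the path and reserve the better/best-endpoint bookkeeping for a single constant-length piece at the end.
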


\begin{proof}
By Theorem~\ref{finallemma}, Builder can either construct a green $P_{4k-8}$ within $6k-14$ rounds, or a good $P_{4k-8}$ within $6k-13$ rounds. Then, by applying Lemmas~\ref{mainthm} and~\ref{G1andG1}, Builder can construct a green $P_8$ that is vertex-disjoint from $P_{4k-8}$.

If Builder obtains a green $P_{4k-8}$ within $6k-14$ rounds, and the $P_8$ is a green path constructed within 7 rounds, a good path constructed within 9 rounds, or a better path constructed within 11 rounds, then by applying parts (3), (2), and (4) of Lemma~\ref{Connection Lemma} respectively, Builder can connect $P_8$ and $P_{4k-8}$ into a green $C_{4k}$. In all cases, the total number of rounds does not exceed $6k+1$.

If Builder obtains a good $P_{4k-8}$ within $6k-13$ rounds, and the $P_8$ is a green path constructed within 7 rounds, a good path within 9 rounds, or a better path within 11 rounds, then by applying parts (2), (1), and (4) of Lemma~\ref{Connection Lemma} respectively, a green $C_{4k}$ can be formed. Again, the total number of rounds is at most $6k+1$.

If Builder obtains a green $P_{4k-8}$ within $6k-14$ rounds, and the $P_8$ is a better path constructed within 12 rounds, then applying Lemma~\ref{Connection Lemma} (4) completes the proof, with a total of at most $6k+1$ rounds.

If Builder obtains a good $P_{4k-8}$ within $6k-13$ rounds, and the $P_8$ is a better path constructed within 12 rounds, then only two additional rounds are needed to form the green $C_{4k}$. This is because if the good endpoint of $P_{4k-8}$ is incident to a red edge, then by Lemma~\ref{G1andG1}, the good endpoint of $P_8$ can be made incident to a blue edge; conversely, if the good endpoint of $P_{4k-8}$ is incident to a blue edge, then the good endpoint of $P_8$ can be made incident to a red edge. Therefore, the total number of rounds is at most $6k+1$.
\end{proof}

\begin{lemma}\label{c4k1cor}
	Builder can either construct a green $P_9$ within 10 rounds, a better $P_9$ within 13 rounds, or a best $P_9$ within 14 rounds.
\end{lemma}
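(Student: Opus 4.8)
The plan is to imitate the proof of Lemma~\ref{mainthm}. First I would use Lemma~\ref{cbasiclemma} twice to build two vertex-disjoint units $G_i,G_j$ with $i\le j$, which costs at most $8$ rounds, and only $6$ rounds when both units lie in $\{G_1,G_2\}$. I would then glue the two units into a path on nine vertices, casing on the pair $(G_i,G_j)$ as in Lemma~\ref{mainthm}. The facts to exploit are that each of $G_2,G_3,G_5,G_6$ contains a green $P_4$ (and $G_3$ even a good one), that $G_5$ is a $C^{\text{RB}}_4$, that one or two extra rounds turn $G_4$ into a better green $P_4$ or a good green $P_5$ (a suitable extra edge of $G_4$ being forced green), and that $G_1$ carries a better vertex; throughout I would use repeatedly that, since no red or blue $P_3$ is present, two edges meeting at a common vertex cannot share a non-green colour, so most newly drawn edges are forced green.

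Since two units together have only eight vertices while $P_9$ needs nine, every construction must create the ninth vertex by introducing one fresh bridge vertex. So, in the generic cases where both units yield a green $P_4$, the opening move is to take a new vertex $w$ and draw the two edges joining $w$ to an endpoint of each $P_4$. If Painter colours both green, a green $P_9$ appears in at most $10$ rounds and we are done. Otherwise a non-green bridge edge promotes the relevant path-endpoint — and, once $w$ is incident to two differently-coloured non-green edges, $w$ itself — to a good or a \emph{better} vertex, producing either a good green $P_5$ together with a green $P_4$, or a better green $P_5$ together with a green $P_4$. I would then recombine these using Lemma~\ref{cmainlemma}, steering toward part~(3) (a better path of length $\ge 3$ glued to any green path gives a better path, with no ``merely good'' branch) whenever it is available, and using Lemma~\ref{Tlemma} when a $C^{\text{RB}}$ is present; the upshot should be a better $P_9$ within $13$ rounds, or in the tightest configurations a best $P_9$ within $14$ rounds.

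For the remaining cases, where a unit is $G_1$ or $G_4$: for $G_4$ I would first spend a round to obtain a better green $P_4$, then proceed via Lemma~\ref{cmainlemma}(3) as above; for a single $G_1$ I would use Claim~\ref{G1} from the proof of Lemma~\ref{mainthm}; and for $G_1\cup G_1$, reached in $6$ rounds, I would call Lemma~\ref{G1andG1} to get, in $6$ further rounds, a better $P_8$ whose good endpoint has a colour chosen by Builder, and then extend it by one vertex — the colour control ensuring that the extension edge is forced — and if necessary by one more round to upgrade the other endpoint.

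The hard part will be the round accounting precisely when both units are of the four-round type: then only two rounds remain for a cooperative green $P_9$, so the bridge must be a single extra vertex with no slack, and when Painter declines the bridge one must make sure the recombination really delivers a \emph{better} (not merely good) $P_9$ within the allowance, rather than a good $P_9$ one round too late. Getting this right is exactly where the non-green edges inside $G_4$ (and $G_1$) are used to force a better vertex along the way, and where the looser ``best $P_9$ within $14$ rounds'' clause absorbs the worst configurations.
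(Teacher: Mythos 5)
There is a genuine gap, and it is precisely the one you flag but do not close. Your generic recipe is: spend up to $8$ rounds on two units, bridge them through one fresh vertex in rounds $9$--$10$, and, if Painter declines, recombine the resulting good $P_5$ and good $P_4$ via Lemma~\ref{cmainlemma}. But parts (1) and (2) of Lemma~\ref{cmainlemma} have a ``merely good'' branch: they may hand you a good (not better) $P_9$ at round $11$ or $12$, which satisfies none of the three clauses of the lemma (a good $P_9$ is a green $P_9$, but it arrives after round $10$, and it is not better). Your stated remedy --- steer toward part (3) --- is unavailable exactly here, because at that moment no better path exists yet; and there is no cheap way to upgrade a good $P_9$ to a better $P_9$, since Painter can answer green to every probe edge at a fresh vertex and never supply the second non-green colour at an endpoint (you just get longer green paths whose tips are not even good, which help none of the three clauses). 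In addition, the tight cases are mis-assessed: in Figure~\ref{fig:c-unit} the unit $G_5$ contains only a green $P_3$, not a green $P_4$ (likewise $G_4$), so for pairs such as $(G_4,G_4)$, $(G_4,G_5)$, $(G_5,G_6)$ your eight vertices do not even carry two green $P_4$'s; converting them costs extra forced-green rounds, and the accounting ``$8$ + conversions + $2$ bridges + recombination $\le 13$'' is exactly where the budget breaks and is nowhere verified. So the proposal is an outline whose hardest cases are open, not a proof.

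For comparison, the paper avoids redoing the pair analysis altogether: it first invokes Lemma~\ref{mainthm} as a black box (green $P_8$ within $7$, good $P_8$ within $9$, better $P_8$ within $11$, or $G_1\cup G_1$ within $6$ rounds), proves the small claim that a better $P_n$ extends to a better $P_{n+1}$ in $2$ rounds, and then adds the ninth vertex with a short uniform argument: two probe edges from a new vertex to the ends of the $P_8$, and in the failure branch the forced-green chords $v_1v_8$ and $v_7v_9$, which automatically produce a \emph{better} $P_9$ because the declined probe edges create the needed non-green incidences at the new endpoints; the $G_1\cup G_1$ case is handled by an explicit forced-green construction giving a better $P_9$ at $13$ or a best $P_9$ at $14$. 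If you want to salvage your route, you would need either a per-pair analysis of comparable length to Lemma~\ref{mainthm} with the better/best bookkeeping done at the $9$-vertex level, or, more simply, to quote Lemma~\ref{mainthm} and reduce to the one-vertex extension as the paper does.
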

\begin{proof}
	Before proceeding with the proof, we first establish the following claim.
	\begin{claim}\label{better}
		For any $n\ge 3$, every better $P_n$ can be extended to a better $P_{n+1}$ within 2 rounds.
	\end{claim}
	\begin{proof}
		Denote the better $P_n$ as $v_1v_2\cdots v_n$. Suppose vertex $v_1$ is a good vertex and $v_n$ is a better vertex.

		Introduce a new vertex $w$ and draw the edge $wv_1$. If $wv_1$ is not green, then $v_1$ becomes a better vertex. In this case, draw the edge $wv_n$, which must be green, yielding a better $P_{n+1}$: $wv_n\cdots v_2v_1$ within 2 rounds. If $wv_1$ is green, then draw the edge $wv_2$. If $wv_2$ is not green, the resulting better $P_{n+1}$ is $wv_1v_2\cdots v_n$; if $wv_2$ is green, then we obtain $v_1wv_2\cdots v_n$ as a better $P_{n+1}$.
	\end{proof}

	First apply Lemma~\ref{mainthm}, and let the resulting graph be $G$.

	If $G$ is $G_1\cup G_1$, relabel the vertices of one copy of $G_1$ from $v_i^1$ to $u_i^1$ for $i\in [4]$. Builder draws the edge $v_3^1v_1^1$. If $v_3^1v_1^1$ is green, then draw the edges $v_3^1v_2^1$, $v_2^1u_3^1$, $u_3^1u_2^1$, and $v_4^1u_1^1$, all of which must be green. This yields a better $P_8$ within 11 rounds: $u_4^1u_1^1v_4^1v_1^1v_3^1v_2^1u_3^1u_2^1$. Applying Claim~\ref{better}, a better $P_9$ is then obtained within 13 rounds. 

	If $v_3^1v_1^1$ is not green, introduce a new vertex $w$ and draw the edge $v_1^1v_2^1$. If $v_1^1v_2^1$ is green, then draw the necessarily green edges $v_4^1w$, $wv_3^1$, $v_3^1u_1^1$, $u_3^1u_2^1$, and $v_2^1u_3^1$, resulting in a better $P_9$ within 13 rounds: $u_4^1u_1^1v_3^1wv_4^1v_1^1v_2^1u_3^1u_2^1$. If $v_1^1v_2^1$ is not green, then draw the necessarily green edges $u_1^1v_2^1$, $v_2^1u_3^1$, $u_3^1u_2^1$, $u_2^1v_3^1$, $v_3^1w$, and $wv_4^1$, yielding a best $P_9$ within 14 rounds: $u_4^1u_1^1v_2^1u_3^1u_2^1v_3^1wv_4^1v_1^1$.

	If $G$ is a green $P_8$, denote it by $v_1v_2\cdots v_8$. Introduce a new vertex $v_9$ and draw the edges $v_1v_9$ and $v_8v_9$. If at least one of these edges is green, a green $P_9$ can be constructed within 9 rounds. If both are not green, then draw the necessarily green edges $v_1v_8$ and $v_7v_9$, resulting in a better $P_9$ within 11 rounds: $v_8v_1\cdots v_7v_9$.

	If $G$ is a good $P_8$, denote it by $v_1v_2 \cdots v_8$, where $v_8$ is a good vertex. Introduce a new vertex $v_9$ and draw the edge $v_8v_9$. If $v_8v_9$ is green, then a green $P_9$ is constructed within 10 rounds. If $v_8v_9$ is not green, draw the edge $v_1v_9$. If $v_1v_9$ is green, then a better $P_9$ is obtained within 11 rounds: $v_9v_1\cdots v_8$. If $v_1v_9$ is not green, then draw the necessarily green edges $v_1v_8$ and $v_7v_9$, resulting in a better $P_9$ within 13 rounds: $v_8v_1\cdots v_7v_9$.

	If $G$ is a better $P_8$, then by directly applying Claim~\ref{better}, a better $P_9$ can be constructed within 13 rounds.
\end{proof}

\begin{theorem}\label{c4k+1}
For all $k\ge 4$, we have $\tilde{r}(P_3,P_3,C_{4k+1}) \leq 6k+3$.
\end{theorem}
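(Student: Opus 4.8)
The plan is to follow the template of the proof of Theorem~\ref{c4k}, decomposing the target as $4k+1=(4k-8)+9$ and building, on disjoint vertex sets, a long green path $P_{4k-8}$ and a short green path $P_9$, then closing the two pieces into a green $C_{4k+1}$. First I would invoke Theorem~\ref{finallemma} with parameter $k-2$ (legitimate since $k\ge 4$ gives $k-2\ge 2$): Builder obtains either a bad green $P_{4k-8}$ within $6(k-2)-2=6k-14$ rounds, or a good green $P_{4k-8}$ within $6(k-2)-1=6k-13$ rounds. Next, on a set of vertices disjoint from this path, I would run Lemma~\ref{c4k1cor}, which produces (again on fresh vertices) either a green $P_9$ within $10$ rounds, a better $P_9$ within $13$ rounds, or a best $P_9$ within $14$ rounds. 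Because the two constructions take place on disjoint vertex sets and Builder draws no edge between them until the last step, Painter's coloring of each piece is governed only by that piece, so the bad/good status of $P_{4k-8}$ and the green/better/best status of $P_9$ are both fixed once produced. Note $4k-8\ge 8\ge 3$ and $9\ge 3$, so the hypotheses of Lemma~\ref{Connection Lemma} are met throughout.

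The remaining work is a round count for the closing step. When $P_9$ is a better (in particular a best) path, I would close the cycle via Lemma~\ref{Connection Lemma}(4) in $3$ further rounds; this disposes of the branches (bad $P_{4k-8}$, better $P_9$), (bad $P_{4k-8}$, best $P_9$) and (good $P_{4k-8}$, better $P_9$), whose running totals become $6k-14+13+3=6k+2$, $6k-14+14+3=6k+3$, and $6k-13+13+3=6k+3$, respectively. When $P_9$ is only guaranteed to be a green path, I would use Lemma~\ref{Connection Lemma}(3) (seven rounds) if $P_{4k-8}$ is bad, for a total of $6k-14+10+7=6k+3$, and Lemma~\ref{Connection Lemma}(2) with the good long path (six rounds) if $P_{4k-8}$ is good, for a total of $6k-13+10+6=6k+3$. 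In all these branches the bound $6k+3$ is met.

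The single branch that does not fall out of Lemma~\ref{Connection Lemma} directly is (good $P_{4k-8}$ in $6k-13$ rounds, best $P_9$ in $14$ rounds), where $6k+1$ rounds are already spent and only $2$ remain. Here I would argue directly, exploiting that a best path is stronger than the "better-with-controllable-endpoint" object used in the tight case of Theorem~\ref{c4k}: writing $P_{4k-8}=a_1a_2\cdots a_{4k-8}$ and the best $P_9=b_1b_2\cdots b_9$ with both $b_1$ and $b_9$ incident to a red edge and to a blue edge, Builder draws the two edges $a_{4k-8}b_1$ and $b_9a_1$. Each of these is forced green, since a red (resp.\ blue) coloring would create a red (resp.\ blue) $P_3$ at the better vertex $b_1$ or $b_9$. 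This yields the green cycle $a_1a_2\cdots a_{4k-8}b_1b_2\cdots b_9a_1$ of length $4k+1$ in exactly $2$ more rounds, for a total of $6k+3$.

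I expect the only genuinely delicate point to be the bookkeeping: checking that the shift $k\mapsto k-2$ in Theorem~\ref{finallemma} is valid for $k\ge 4$, that the $n,k\ge 3$ hypotheses of Lemma~\ref{Connection Lemma} always hold, and — above all — that the tight branch (good $P_{4k-8}$, best $P_9$) stays within budget, which it does only because both endpoints of a \emph{best} path are better vertices, so the two closing edges are forced green with no further Builder cleverness required. Every other branch has at least one round of slack, so no optimization beyond the case split is needed.
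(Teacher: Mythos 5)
Your proposal is correct and follows essentially the same route as the paper: decompose $4k+1=(4k-8)+9$ via Theorem~\ref{finallemma} and Lemma~\ref{c4k1cor}, close the cycle with parts (2), (3), (4) of Lemma~\ref{Connection Lemma}, and handle the best-$P_9$ branch by two forced-green edges at its better endpoints. The only difference is cosmetic: you spell out the two-round closing in the (good $P_{4k-8}$, best $P_9$) branch, which the paper dismisses as ``straightforward,'' and your round counts agree with the paper's.
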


\begin{proof}
By Theorem~\ref{finallemma}, Builder can either construct a green $P_{4k-8}$ within $6k-14$ rounds, or a good $P_{4k-8}$ within $6k-13$ rounds. Then, using Lemma~\ref{c4k1cor}, Builder constructs a $P_9$ that is vertex-disjoint from $P_{4k-8}$.

If Builder obtains a green $P_{4k-8}$ within $6k-14$ rounds and the $P_9$ is a green path constructed within 10 rounds or a better path constructed within 13 rounds, then by applying parts (3) and (4) of Lemma~\ref{Connection Lemma} respectively, Builder can connect $P_{4k-8}$ and $P_9$ into a green $C_{4k+1}$ within a total of $6k+3$ rounds.

If Builder obtains a good $P_{4k-8}$ within $6k-13$ rounds and the $P_9$ is a green path constructed within 10 rounds or a better path constructed within 13 rounds, then by applying parts (2) and (4) of Lemma~\ref{Connection Lemma} respectively, a green $C_{4k+1}$ can be formed within $6k+3$ rounds in total.

If the $P_9$ is a best path constructed within 14 rounds, it is straightforward to complete the connection to $P_{4k-8}$ in 2 additional rounds, yielding a green $C_{4k+1}$ within a total of $6k+3$ rounds.
\end{proof}

\begin{lemma}\label{cbasiclemma2}
	Builder can construct $H_1$ within $2$ rounds; otherwise, by proceeding one additional round, Builder can construct either $H_2$ or $H_3$.
\end{lemma}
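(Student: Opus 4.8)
The plan is to repeat the template of Lemma~\ref{cbasiclemma} one level shallower: there Builder spends three rounds to build a $P_3$ together with a fourth edge, whereas here Builder spends only two rounds to build a $P_3$ and, when needed, one more edge. Throughout this section no red $P_3$ and no blue $P_3$ is present, so when Builder's first two moves form a path $P_3 = xyz$, the colors of $xy$ and $yz$ fall into exactly three patterns: both green; exactly one green (the other red or blue); or one red and one blue. If both edges are green, the $P_3$ itself is $H_1$ and Builder is done in two rounds. Builder cannot \emph{force} this pattern --- Painter is free to choose one of the other two --- but in those patterns Builder will pivot with a third edge, which is exactly the ``$H_1$ in two rounds, otherwise $H_2$ or $H_3$ in one more'' shape of the statement.

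It remains to handle the two residual patterns with a single extra edge; both times Builder draws a pendant edge $yw$ from the center $y$ to a fresh vertex. If $xy$ and $yz$ carry the two non-green colors, say $xy$ red and $yz$ blue, then $y$ already meets a red and a blue edge, so Painter is forced to color $yw$ green; this yields a fixed three-edge configuration --- a $K_{1,3}$ centered at $y$ with edges red, blue, and green --- which Builder takes as one of $H_2, H_3$ (note $y$ has then become a \emph{better} vertex carrying the green edge $yw$). If instead exactly one of $xy, yz$ is green, say $xy$ green and $yz$ red, then Painter may not color $yw$ red, as a red $P_3$ at $y$ would appear; so $yw$ is green or blue. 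If $yw$ is green, then $xyw$ is a green $P_3$ whose center $y$ is a good vertex; if $yw$ is blue, then $xy$ is a green $P_2$ whose endpoint $y$ has become a better vertex. In every branch Builder ends with a copy of $H_1$, $H_2$, or $H_3$, and since we have enumerated all colorings of the initial $P_3$ permitted by the no-monochromatic-$P_3$ hypothesis, the case analysis is exhaustive and costs at most one round beyond the $P_3$.

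The only point that needs care --- and the only place I expect any real thought --- is fixing $H_2$ and $H_3$ so that each one is produced regardless of which of Painter's two legal responses to the third edge occurs: they must record precisely the right decoration (a green $P_3$ with a marked good center, respectively a green $P_2$ with a marked better endpoint, up to the exact shapes drawn in the accompanying figure) to be usable in the later theorems for $\ell \equiv 2, 3 \pmod{4}$, while still being weak enough that Painter cannot escape all of $H_1, H_2, H_3$. Checking that no forbidden red or blue $P_3$ is accidentally forced along the way, and that the three unit graphs in the figure are exactly the ones produced, is routine bookkeeping; as this is a strictly easier analogue of Lemma~\ref{cbasiclemma}, I foresee no genuine obstacle beyond aligning these definitions with how $H_1, H_2, H_3$ are consumed downstream.
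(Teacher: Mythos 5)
There is a genuine gap: your third move is the wrong one, and as a result you never build the graphs $H_2$, $H_3$ that the lemma actually asserts. In Figure~\ref{fig:c-unit2}, $H_2$ and $H_3$ are \emph{triangles} — $H_2$ is the rainbow triangle $C^{\text{RB}}_3$ (one green, one red, one blue edge) and $H_3$ is a triangle with two green edges and one non-green edge. Your pivot draws a pendant edge $yw$ from the center of the $P_3$, so no triangle is ever closed. In the red--blue case you obtain a tri-colored star $K_{1,3}$, which is not $C^{\text{RB}}_3$; in the one-green case, if Painter colors $yw$ with the remaining non-green color you are left with only a green $P_2$ whose endpoint is a better vertex, which is strictly weaker than $H_3$ (a green $P_3$ both of whose endpoints are good vertices), and if Painter colors $yw$ green you get $H_1$ in three rounds rather than $H_2$ or $H_3$. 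Your closing remark that $H_2,H_3$ might be ``a green $P_3$ with a marked good center'' and ``a green $P_2$ with a marked better endpoint'' confirms the mismatch: the lemma is about the fixed triangles of the figure, and those triangles are exactly what the later arguments consume — $H_2=C^{\text{RB}}_3$ is fed into Lemma~\ref{Tlemma} in Lemma~\ref{mainthm2} and Lemma~\ref{c4k+3Lemma}, and the two green triangle edges of $H_3$ are used explicitly there (e.g.\ the path $xP_{4k-8}yv_2^3v_3^3v_1^3$). Your weaker configurations would not support those steps without reworking the downstream proofs and their round counts.

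The correct (and simpler) third move is the paper's: after the $P_3=xyz$ with at least one non-green edge, Builder joins the two \emph{endpoints} $x$ and $z$. If $xy,yz$ are red and blue, then $xz$ colored red or blue would complete a red or blue $P_3$ at $x$ or $z$, so it must be green and the triangle is $H_2$. If exactly one of $xy,yz$ is green, say $yz$ is red, then $xz$ cannot be red (red $P_3$ at $z$), so it is blue, giving $H_2$, or green, giving $H_3$. Closing the triangle is what lets the single forbidden-color constraint at a shared endpoint force Painter into exactly the patterns $H_2$ or $H_3$; a pendant edge at the center forfeits that forcing.
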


	\begin{figure}[ht]
		\centering
		\definecolor{darkgreen}{rgb}{0.0, 0.5, 0.0} 
		\begin{subfigure}{0.3\textwidth}
			\centering
			\begin{tikzpicture}[scale=1.9, 
				vertex/.style={circle, draw, fill=white, inner sep=3pt},
				every label/.style={scale=1, black}]
				\node[vertex, label=below:$v_1^1$] (v1) at (0, 0) {};
				\node[vertex, label=below:$v_2^1$] (v2) at (1, 0) {};
				\node[vertex, label=below:$v_3^1$] (v3) at (2, 0) {};
				\draw[line width=0.5mm, darkgreen] (v1) -- (v2);
				\draw[line width=0.5mm,darkgreen] (v2) -- (v3);
			\end{tikzpicture}
			\caption{$H_1$}
		\end{subfigure}
		\begin{subfigure}{0.3\textwidth}
			\centering
			\begin{tikzpicture}[scale=1.9, 
				vertex/.style={circle, draw, fill=white, inner sep=3pt},
				every label/.style={scale=1, black}]		
					\node[vertex, label=below:$v_1^2$] (v1) at (0, 0) {};
				\node[vertex, label=below:$v_2^2$] (v2) at (2, 0) {};
				\node[vertex, label=left:$v_3^2$] (v3) at (1, 1) {};
			
				\draw[line width=0.5mm, darkgreen] (v1) -- (v2);
				\draw[line width=0.5mm,dashed] (v1) -- (v3);
				\draw[line width=0.5mm,dotted] (v2) -- (v3);
			
			\end{tikzpicture}
			\caption{$H_2$}
		\end{subfigure}
		\begin{subfigure}{0.3\textwidth}
		\centering
		\begin{tikzpicture}[scale=1.9, 
			vertex/.style={circle, draw, fill=white, inner sep=3pt},
			every label/.style={scale=1, black}]		
			\node[vertex, label=below:$v_1^3$] (v1) at (0, 0) {};
			\node[vertex, label=below:$v_2^3$] (v2) at (2, 0) {};
			\node[vertex, label=left:$v_3^3$] (v3) at (1, 1) {};		
			\draw[line width=0.5mm, dashed] (v1) -- (v2);
			\draw[line width=0.5mm, darkgreen] (v1) -- (v3);
			\draw[line width=0.5mm, darkgreen] (v2) -- (v3);
			
		\end{tikzpicture}
		\caption{$H_3$}
		\end{subfigure}	
		\caption{Graphs Builder can create in 3 rounds, where $\{\text{dotted}, \text{dashed}\} = \{\text{red}, \text{blue}\}$.}
		\label{fig:c-unit2}
	\end{figure}
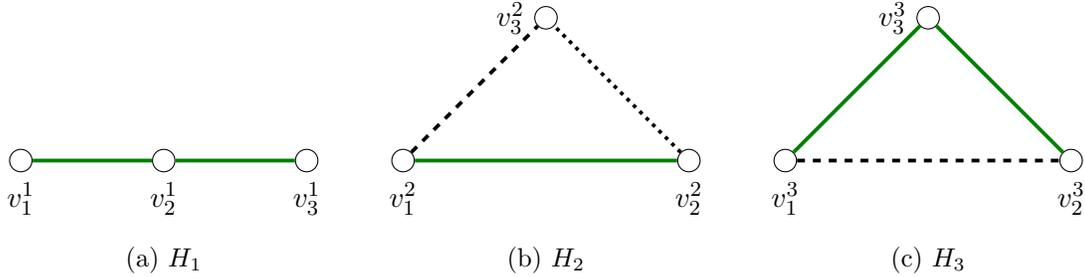

	\begin{proof}
	Note that the dashed edge in $H_3$ indicates that the edge can be red or blue. In $H_2$, the two dashed edges respectively represent one red edge and one blue edge. Also observe that the vertex labels in the graphs are introduced solely for the convenience of subsequent proofs and are independent of the order in which the edges are drawn.

	Builder first constructs a $P_3$. If all edges of this $P_3$ are green, then $H_1$ is obtained; otherwise, connecting the two endpoints of this $P_3$ yields either $H_2$ or $H_3$.
	\end{proof}

\begin{lemma}\label{mainthm2} 
Builder either constructs a green $P_6$ within 5 rounds, a good $P_6$ within 6 rounds, the graph $H$ shown in Figure~\ref{fig:H} within 7 rounds, a better $P_6$ within 8 rounds, or a best $P_6$ within 9 rounds.
\end{lemma}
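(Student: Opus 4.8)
The plan is to mirror the two-phase argument of Lemma~\ref{mainthm}, replacing the four-round units of Lemma~\ref{cbasiclemma} by the cheaper gadgets $H_1,H_2,H_3$ of Lemma~\ref{cbasiclemma2}. First I would have Builder invoke Lemma~\ref{cbasiclemma2} twice to obtain two vertex-disjoint gadgets; write the pair as $(H_i,H_j)$, where by symmetry we may take $1\le i\le j\le 3$. The round costs are $2+2=4$ for $(H_1,H_1)$, $2+3=5$ for $(H_1,H_2)$ and $(H_1,H_3)$, and $3+3=6$ for $(H_2,H_2)$, $(H_2,H_3)$ and $(H_3,H_3)$. Builder then welds the two gadgets together, one configuration at a time, with Lemmas~\ref{cmainlemma} and~\ref{Tlemma}, tracking the round budget throughout. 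The structural facts I would lean on are that $H_1$ is a bare green $P_3$; that $H_3$ is a green $P_3$ both of whose ends are good vertices (the ends of its red/blue chord), so it behaves like a ``doubly good'' green $P_3$; and that $H_2$ is exactly a $C^{\text{RB}}_3$, i.e.\ a green $P_2$ together with a better vertex, which is precisely the hypothesis of Lemma~\ref{Tlemma}.

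The easy configurations need only short bookkeeping. From $(H_1,H_1)$, Builder holds two disjoint green $P_3$'s after $4$ rounds, so Lemma~\ref{cmainlemma}(4) yields a green $P_6$ within $5$ rounds, a good $P_6$ within $6$, or a better $P_6$ within $8$. For $(H_1,H_2)$ and $(H_2,H_3)$, apply Lemma~\ref{Tlemma} to the $C^{\text{RB}}_3$ and the available green path: the general case (for the non-good $P_3$ of $H_1$, $5+3=8$) and the good-path case (for the doubly good $P_3$ of $H_3$, $6+2=8$) each deliver a better $P_6$ within $8$ rounds. For $(H_1,H_3)$, Builder draws a single edge joining an endpoint of $H_1$'s $P_3$ to an endpoint of $H_3$'s $P_3$: if it is coloured green a good $P_6$ appears within $6$ rounds; if it is not green, then that end of $H_3$'s path acquires a second, differently coloured non-green edge — a repeated colour would be an immediate win — hence becomes a better vertex, and Lemma~\ref{cmainlemma}(3) then produces a better $P_6$ within $8$ rounds.

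The hard part will be the two ``doubly decorated'' configurations $(H_2,H_2)$ and $(H_3,H_3)$, where the combined green skeleton is too thin to assemble a $P_6$ inside the nominal budget — two $C^{\text{RB}}_3$'s, for instance, carry green edges spanning only four of the six vertices a $P_6$ needs. The device I would use is that any fresh edge drawn at a better vertex is forced green, and any fresh non-green edge at a good vertex either promotes it to a better vertex or repeats an existing colour and wins outright; chaining a few such forced moves converts the two gadgets into two vertex-disjoint \emph{better} (indeed \emph{best}) green $P_3$'s, and connecting those through better endpoints yields a best $P_6$ within $8$ or $9$ rounds. When even this runs over the budget — essentially the $(H_2,H_2)$ case, where both better vertices are already saturated by their two non-green edges — Builder instead draws one forced green edge between the two better vertices and stops, having built the graph $H$ of Figure~\ref{fig:H} within $7$ rounds for a later lemma to handle. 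I expect the bulk of the work, and the main obstacle, to be the careful branch-by-branch verification that every line of play terminates on exactly one of the five listed outcomes: in particular that the ``good $P_6$ within $6$ rounds'' clause is invoked only from the $4$- and $5$-round configurations, and that the $7$-round fallback is precisely the graph $H$.
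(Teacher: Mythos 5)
Your overall skeleton matches the paper's proof, and your handling of $(H_1,H_1)$, $(H_1,H_2)$, $(H_1,H_3)$ and $(H_2,H_3)$ is essentially the paper's argument (Lemma~\ref{cmainlemma}(4), Lemma~\ref{Tlemma} in its two forms, and a one-edge probe followed by Lemma~\ref{cmainlemma}(3), all within the stated budgets). The gap is in the two hard cases, and it is not just a matter of unfinished bookkeeping. First, you assign the graph-$H$ fallback to $(H_2,H_2)$ and claim it is obtained by drawing one forced green edge between the two better vertices; this is false. After that edge the green edges are $v_1^2v_2^2$, $u_1^2u_2^2$ and $v_3^2u_3^2$, a perfect matching on six vertices, whereas $H$ is a green $P_6$ (five green edges) carrying two non-green chords. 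The paper instead disposes of $(H_2,H_2)$ directly: using the explicit colors (red at $v_1^2,u_1^2$, blue at $v_2^2,u_2^2$, both at $v_3^2,u_3^2$), the three cross edges $v_2^2u_1^2$, $v_1^2u_3^2$, $v_3^2u_2^2$ are each forced green and already yield the best $P_6$ $v_3^2u_2^2u_1^2v_2^2v_1^2u_3^2$ at round $9$. Your alternative plan of first manufacturing two ``best green $P_3$'s'' inside the gadgets is not realizable within budget, since each $H_2$ contains only a green $P_2$ and a single better vertex, and Builder cannot force a good vertex to become better (Painter may simply color any probing edge green).

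Second, the $H$ outcome is in fact needed in the case you think is solvable by connection, namely $(H_3,H_3)$: Builder joins the two green $P_3$'s by one edge; if Painter colors it green, the result at round $7$ is a green $P_6$ whose endpoints are only good, which misses both the $5$-round green-$P_6$ and the $6$-round good-$P_6$ clauses --- this configuration, a green $P_6$ with the two non-green chords $u_1^3u_2^3$ and $v_1^3v_2^3$, is exactly $H$, and recording it as such is the whole reason the lemma lists $H$ as an outcome. Your plan to instead upgrade the endpoints to better vertices fails here for the same reason as above: Painter can answer every probe at a good endpoint with green, and the natural continuation (e.g.\ closing a green $C_6$ with the two chords) only produces $H$ at round $8$, which is over budget. (The non-green branch of the connecting edge is fine and gives a better $P_6$ at round $8$, as in the paper.) So the proposal needs the $(H_2,H_2)$ construction replaced by the forced-three-edge argument and the $H$ fallback moved to the green branch of $(H_3,H_3)$.
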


		\begin{figure}[ht]
		\centering
		\definecolor{darkgreen}{rgb}{0.0, 0.5, 0.0} 
			\begin{subfigure}{0.3\textwidth}
			\centering
			\begin{tikzpicture}[scale=1.5, 
				vertex/.style={circle, draw, fill=white, inner sep=3pt},
				every label/.style={scale=1, black}]
				
				\node[vertex, label=below:$v_1$] (v1) at (0, 0) {};
					\node[vertex, label=left:$v_2$] (v2) at (0.5, 1) {};
				\node[vertex, label=below:$v_3$] (v3) at (1, 0) {};
				\node[vertex, label=below:$v_4$] (v4) at (2, 0) {};
				\node[vertex, label=left:$v_5$] (v5) at (2.5, 1) {};
				\node[vertex, label=below:$v_6$] (v6) at (3, 0) {};
				\draw[line width=0.5mm, darkgreen] (v1) -- (v2);
				\draw[line width=0.5mm, darkgreen] (v2) -- (v3);
				\draw[line width=0.5mm, darkgreen] (v3) -- (v4);
				\draw[line width=0.5mm, darkgreen] (v4) -- (v5);
				\draw[line width=0.5mm, darkgreen] (v5) -- (v6);
				\draw[line width=0.5mm, dashed] (v1) -- (v3);
				\draw[line width=0.5mm, dashed] (v4) -- (v6);
			
			\end{tikzpicture}
				\caption*{$H$}
				\end{subfigure}
			\caption{$H$ built by Builder in 7 rounds, where $\{\text{dashed}\} = \{\text{red}, \text{blue}\}$.}
			\label{fig:H}		
		\end{figure}
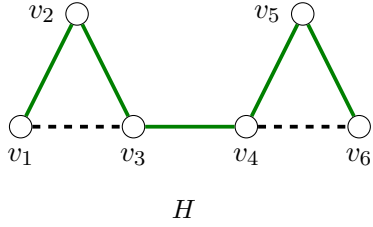

	\begin{proof} 
	Builder applies Lemma~\ref{cbasiclemma2} twice to construct two disjoint copies of the graphs shown in Figure~\ref{fig:c-unit2}, denoted by $(H_i,H_j)$. By symmetry, we may assume $1\leq i \leq j \leq 3$. We discuss all possible configurations of $(H_i,H_j)$ as follows.
	
	\setcounter{case}{0}
	\begin{case}
		$i=1$.
	\end{case}
	For the pair $(H_1,H_1)$, by Lemma~\ref{cmainlemma} (4), Builder either obtains a green $P_5$ within 5 rounds, a good $P_6$ within 6 rounds, or a better $P_6$ within 8 rounds.
	
	For the pair $(H_1,H_2)$, note that $H_2=C^{\text{RB}}_3$. By Lemma~\ref{Tlemma}, a better $P_6$ is obtained within 8 rounds.
	
	For the pair $(H_1,H_3)$, Builder draws the edge $v_3^1v_1^3$. If $v_3^1v_1^3$ is green, then a good $P_6$ is obtained in 6 rounds. If $v_3^1v_1^3$ is not green, then Builder draws the edge $v_1^1v_2^3$. If $v_1^1v_2^3$ is green, then a better $P_6$ is obtained in 7 rounds; otherwise, Builder draws the edge $v_3^1v_2^3$, which must be green, and thus a better $P_6$ is obtained in 8 rounds.

	\begin{case}
		$i=2$.
	\end{case}
	For the pair $(H_2,H_2)$, we relabel the vertices of one copy of $H_2$ as $u_1^2,u_2^2,u_3^2$. Without loss of generality, suppose $v_1^2v_3^2$ and $u_1^2u_3^2$ are red, while $v_2^2v_3^2$ and $u_2^2u_3^2$ are blue. Next, Builder draws the edges $v_2^2u_1^2$, $v_1^2u_3^2$, and $v_3^2u_2^2$, all of which must be green. Hence, within 9 rounds, the path $v_3^2u_2^2u_1^2v_2^2v_1^2u_3^2$ is formed, which is a best $P_6$.

	For the pair $(H_2,H_3)$, by Lemma~\ref{Tlemma}, a better $P_6$ is obtained within 8 rounds.

	\begin{case}
		$i=3$.
	\end{case}
	For the pair $(H_3,H_3)$, we relabel the vertices of one copy of $H_3$ as $u_1^3,u_2^3,u_3^3$. Builder draws the edge $u_2^3v_1^3$. If this edge is green, then the graph $H$ is obtained in 7 rounds. If $u_2^3v_1^3$ is not green, Builder then draws the edge $u_1^3v_1^3$, which must be green. Hence, a better $P_6$ is obtained within 8 rounds.
\end{proof}

\begin{lemma}\label{HLemma} 
Let $H$ be the graph shown in Figure~\ref{fig:H}, and let $P_n$ be a green path with $n\ge 2$ that is vertex-disjoint from $H$. Then within the next 4 rounds, Builder can construct a green $C_{n+6}$.
\end{lemma}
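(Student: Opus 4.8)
\medskip
\noindent\textbf{Proof proposal.} The plan is to realise the green cycle $C_{n+6}$ as the green path $v_1v_2v_3v_4v_5v_6$ inside $H$ concatenated with the given green path $P_n$, so that the target is a cycle of the shape $u_a\,v_1v_2v_3v_4v_5v_6\,u_b\cdots u_a$, where $u_a,u_b$ are the two endpoints of $P_n$. Thus Builder only has to force two green ``link'' edges --- one joining $v_1$ to an endpoint of $P_n$, the other joining $v_6$ to the other endpoint. I would first record the structural reason this shape is essentially forced: in $H$ the vertices $v_2,v_5$ have their two green neighbours pinned, and if Builder never makes a new green edge at $v_3$ or $v_4$ then so do $v_3,v_4$; hence any green $C_{n+6}$ on $V(H)\cup V(P_n)$ must contain the whole green $P_6$ as a subpath and must attach $v_1$ and $v_6$ to the two endpoints of $P_n$ (note that $v_1$ and $v_6$ have green degree one, so each genuinely needs a new green edge). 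This reduces the lemma to forcing a green perfect matching between $\{v_1,v_6\}$ and $\{u_a,u_b\}$, or, in some branches, to producing a \emph{better vertex} from which one final forced green edge closes the cycle. The generic connection lemmas are too slow ($5$--$6$ rounds), so the argument must use the special structure of $H$.

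The engine is that $v_1$ and $v_6$ are good vertices, each incident to exactly one non-green edge ($v_1v_3$, resp.\ $v_4v_6$): if Builder draws an edge at $v_1$ (or $v_6$) and Painter answers non-green, that vertex becomes a better vertex, all later edges at it are forced green, and the other endpoint of that edge picks up a non-green edge. I would split according to whether $v_1v_3$ and $v_4v_6$ received the same colour. If they did, Builder simply plays the four edges $v_1u_1,\;v_1u_n,\;v_6u_1,\;v_6u_n$: a non-green answer here must be the single remaining colour, and since these four edges form a copy of $K_{2,2}$ in which any two non-matching edges share a vertex, two non-green (hence equicoloured) edges sharing a vertex would create a monochromatic $P_3$; so at least three of the four edges are green, and three edges of a $K_{2,2}$ always contain a perfect matching between $\{v_1,v_6\}$ and $\{u_1,u_n\}$. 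This settles the same-colour case within $4$ rounds.

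When $v_1v_3$ and $v_4v_6$ got different colours, the naive $K_{2,2}$ play can be defeated: Painter can answer the two $u_n$-edges non-green in the two distinct colours, keeping both green link edges on the single endpoint $u_1$. Here Builder plays adaptively, exploiting two extra facts peculiar to this case. First, the edge $v_1v_6$ is now forced green (it may repeat neither the colour of $v_1v_3$ at $v_1$ nor that of $v_4v_6$ at $v_6$), so the green part of $H$ becomes a green $C_6$. Second, whenever a probe $v_1u_i$ or $v_6u_i$ is answered non-green, the probed vertex becomes better \emph{and} the endpoint $u_i$ collects a non-green edge, so a second non-green answer at $u_i$ makes $u_i$ itself a better vertex --- at which point one forced green edge out of $u_i$ (for instance $u_iv_6$, using the green $C_6$) completes a Hamiltonian green cycle on $V(H)\cup V(P_n)$. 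I would schedule the at most four moves so that every line of Painter's replies lands in a favourable configuration: either the green matching already appears, or one of $v_1,v_6,u_1,u_n$ has turned into a better vertex from which the remaining round(s) finish the job. The alternative attachment pair $v_3,v_4$ (using the green $C_6$ with $v_3v_4$ removed) is available as a back-up, but it behaves the same way and is not needed if the scheduling is done carefully.

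The step needing the most care is precisely this coordination in the different-colours case: Builder must never let Painter push both link edges onto the same endpoint of $P_n$ while the other endpoint still has green degree one (only its $P_n$-edge), since such a vertex can no longer lie on any cycle. Arranging the adaptive order of the four moves so that each non-green answer is ``paid for'' by a fresh better vertex that completes the construction is the delicate part; the remaining bookkeeping --- which green path is traversed in which direction, and the round count --- is routine.
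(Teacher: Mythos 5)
Your same-colour case is essentially sound: any non-green reply on the four edges between $\{v_1,v_6\}$ and the endpoints of $P_n$ is forced to the single remaining colour, so the non-green replies must form a matching in the $K_{2,2}$, and hence one of its two perfect matchings is entirely green. (Your intermediate claim that ``at least three of the four edges are green'' is not quite right --- Painter may refuse a non-adjacent pair --- but the conclusion survives, since the complementary perfect matching is then green.)

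The genuine gap is the different-colours case, which you explicitly leave as a scheduling problem to be done ``carefully''; the mechanism you sketch there does not fit the $4$-round budget. Turning an endpoint $u_i$ of $P_n$ into a better vertex costs two non-green probes at $u_i$, and after that a single forced green edge out of $u_i$ does not close a Hamiltonian cycle: you still need a green attachment at the other endpoint of $P_n$, and if you insist on attaching only at $v_1$ and $v_6$ you also need the extra round for $v_1v_6$ to create the green $C_6$; counting that line gives five rounds, not four. The missing idea is the rerouting inside $H$ that the lemma is really about, and it makes any case split on the colours of $v_1v_3$ and $v_4v_6$ unnecessary. The paper probes one edge at each endpoint, $xv_6$ and $yv_1$. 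If both are green, done in two rounds. If, say, $yv_1$ is refused, it must carry the colour opposite to that of $v_1v_3$, so $v_1$ becomes a better vertex and both $yv_3$ and $v_1v_4$ are forced green, giving the green cycle $xP_nyv_3v_2v_1v_4v_5v_6x$ in four rounds; if both probes are refused, then $v_1$ and $v_6$ are better vertices and $xv_1$, $yv_6$ are forced green, giving $xP_nyv_6v_5v_4v_3v_2v_1x$. Note that in the mixed case the cycle does not traverse $H$ along $v_1v_2\cdots v_6$ at all; your opening reduction to attaching only at $v_1$ and $v_6$ is precisely what pushes you into the unresolved (and, as scheduled in your sketch, too expensive) adaptive case.
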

\begin{proof}
Let $x$ and $y$ be the endpoints of the green $P_n$. Builder draws the edges $xv_6$ and $yv_1$. 

If both $xv_6$ and $yv_1$ are green, then a green $C_{n+6}$ is obtained in 2 rounds. 

If exactly one of $xv_6$ and $yv_1$ is green, without loss of generality, assume $yv_1$ is not green. Then Builder draws the edges $yv_3$ and $v_1v_4$, both of which must be green. Consequently, within 4 rounds, a green $C_{n+6}$ is formed: $xP_n yv_3v_2v_1v_4v_5v_6x$.

If neither $xv_6$ nor $yv_1$ is green, then both $v_1$ and $v_6$ become better vertices. Builder then draws the edges $xv_1$ and $yv_6$, both of which must be green. Hence, within 4 rounds, a green $C_{n+6}$ is obtained: $xP_n yv_6v_5v_4v_3v_2v_1x$.
\end{proof}

\begin{theorem}\label{c4k-2}
For all $k\ge 4$, we have $\tilde{r}(P_3,P_3,C_{4k-2}) \leq 6k-2$.
\end{theorem}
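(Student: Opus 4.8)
The plan is to follow the template already used for Theorems~\ref{c4k} and~\ref{c4k+1}: build one long green path of order $4k-8$, separately build a small green ``gadget'' of order $6$ on fresh vertices, and then stitch the two pieces into a green cycle of order $(4k-8)+6=4k-2$. Concretely, I would first apply Theorem~\ref{finallemma} with parameter $k-2$ (legitimate since $k\ge 4$, so $k-2\ge 2$) to obtain, in at most $6k-14$ rounds, a bad (hence green) $P_{4k-8}$, or in at most $6k-13$ rounds a good $P_{4k-8}$; note $4k-8\ge 8$, so this path is long enough to feed into the connection lemmas. Then, on a vertex set disjoint from this path, I would invoke Lemma~\ref{mainthm2} to obtain one of: a green $P_6$ within $5$ rounds, a good $P_6$ within $6$ rounds, the graph $H$ of Figure~\ref{fig:H} within $7$ rounds, a better $P_6$ within $8$ rounds, or a best $P_6$ within $9$ rounds.

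The remaining work is to close up. When the gadget is one of the $P_6$-type paths I would connect it to $P_{4k-8}$ using the appropriate part of Lemma~\ref{Connection Lemma} (part~(3) when both pieces are merely green, part~(2) when exactly one is good, part~(1) when both are good, part~(4) when the gadget is a better path), and when the gadget is $H$ I would use Lemma~\ref{HLemma}; in every instance the result is a green $C_{(4k-8)+6}=C_{4k-2}$. The bulk of the proof is then the bookkeeping: tabulate the ten combinations (green versus good $P_{4k-8}$, against the five outcomes of Lemma~\ref{mainthm2}) and verify that the total round count never exceeds $6k-2$. Representative tight branches are $6k-14+5+7$ (green $P_{4k-8}$, green $P_6$, Lemma~\ref{Connection Lemma}(3)) and $6k-13+6+5$ (good $P_{4k-8}$, good $P_6$, Lemma~\ref{Connection Lemma}(1)), both equal to $6k-2$; the branches involving $H$ or a better $P_6$ have slack.

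The one delicate branch — and the place I expect to spend the most care — is ``good $P_{4k-8}$ in $6k-13$ rounds'' together with ``best $P_6$ in $9$ rounds'', since $6k-13+9=6k-4$ leaves only $2$ rounds, while Lemma~\ref{Connection Lemma}(4) would cost $3$. To handle this I would insert a one-line sub-observation before the case analysis: if $P$ is a \emph{best} path (both endpoints are better vertices) and $Q$ is a vertex-disjoint green path, then joining the two endpoints of $P$ to the two endpoints of $Q$ forces both new edges to be green, so a green cycle of order $|V(P)|+|V(Q)|$ is obtained in exactly $2$ rounds. (This is precisely the shortcut Theorem~\ref{c4k+1} uses tacitly for its best-$P_9$ case.) With this observation the problematic branch costs $6k-13+9+2=6k-2$, and every other branch is easily checked to land at or below $6k-2$, which establishes $\tilde{r}(P_3,P_3,C_{4k-2})\le 6k-2$.
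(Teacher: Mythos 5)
Your proposal is correct and follows essentially the same route as the paper: Theorem~\ref{finallemma} for the $P_{4k-8}$, Lemma~\ref{mainthm2} for the order-$6$ gadget, and then Lemmas~\ref{Connection Lemma} and~\ref{HLemma} with exactly the same case bookkeeping. The "delicate branch" you isolate (good $P_{4k-8}$ plus best $P_6$) is handled in the paper by the identical two-round connection, which the paper asserts without spelling out; your explicit justification that both joining edges are forced green is the right one.
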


\begin{proof}
By Theorem~\ref{finallemma}, Builder can either construct a green $P_{4k-8}$ within $6k-14$ rounds, or a good $P_{4k-8}$ within $6k-13$ rounds. Then, applying Lemma~\ref{mainthm2}, Builder constructs a green $P_6$ that is vertex-disjoint from $P_{4k-8}$.

If Builder obtains a green $P_{4k-8}$ within $6k-14$ rounds, and the $P_6$ is a green path constructed in 5 rounds, a good path within 6 rounds, or a better path within 8 rounds, then by applying parts (3), (2), and (4) of Lemma~\ref{Connection Lemma} respectively, we can connect $P_6$ with $P_{4k-8}$ to form a green $C_{4k-2}$ within a total of $6k-2$ rounds.

If Builder obtains a good $P_{4k-8}$ within $6k-13$ rounds, and the $P_6$ is a green path constructed in 5 rounds, a good path within 6 rounds, or a better path within 8 rounds, then by applying parts (2), (1), and (4) of Lemma~\ref{Connection Lemma} respectively, we can again obtain a green $C_{4k-2}$ within $6k-2$ rounds in total.

If the $P_6$ is a best path constructed within 9 rounds, then it can be connected to $P_{4k-8}$ in 2 additional rounds, resulting in a green $C_{4k-2}$ within $6k-2$ rounds in total.

If the $P_6$ is the graph $H$ depicted in Figure~\ref{fig:H} and is constructed within 7 rounds, then by Lemma~\ref{HLemma}, it can be connected to $P_{4k-8}$ in 4 additional rounds to form a green $C_{4k-2}$, still within a total of $6k-2$ rounds.
\end{proof}

\begin{lemma}\label{c4k+3Lemma}
	When $k\ge 4$, Builder either constructs a green $P_{4k-5}$ within $6k-11$ rounds, or a good $P_{4k-5}$ within $6k-9$ rounds, or a better $P_{4k-5}$ within $6k-8$ rounds.
\end{lemma}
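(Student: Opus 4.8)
The plan is to build the long green path $P_{4k-5}$ by first producing a slightly shorter \emph{base} green path with the two‑colour (Section~\ref{sec3}) machinery and then splicing a bounded gadget onto it by means of the Section~\ref{sec4} connection lemmas, all the while keeping track of the good/better status of the endpoints. Concretely, write $4k-5=(4(k-2)-1)+4$. Since $k-2\ge 2$ for $k\ge 4$, \cref{final3lemma} (applied with parameter $k-2$) yields either a (bad) green $P_{4k-9}$ within $6k-15$ rounds or a good green $P_{4k-9}$ within $6k-14$ rounds. On a disjoint set of vertices Builder then runs \cref{cbasiclemma} to obtain one of the units $G_1,\dots,G_6$ of \cref{fig:c-unit} in at most $4$ rounds (indeed in $3$ rounds for $G_1$ and $G_2$).

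Builder next merges the base path with the gadget into a $P_{4k-5}$, distinguishing cases according to the type of gadget. When the unit contains a green $P_4$ (namely $G_2$, $G_3$, $G_6$) one merges via \cref{cmainlemma}: part~(4), with the long base path in the role of ``$P_n$'' since its order exceeds $3$, when the base is bad; and part~(2) or part~(1) when the base is good. When the unit is built around a $C^{\text{RB}}$ — note that $G_5=C^{\text{RB}}_4$, while in $G_4$ the apex is a better vertex, so every edge drawn from it is forced green and the pendant green edge of $G_4$ extends to a better green $P_4$ after one additional forced‑green edge — one merges via \cref{Tlemma} (or via the $C^{\text{RB}}$ part plus \cref{cmainlemma}). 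When the unit is $G_1$ one uses Claim~\ref{G1} from the proof of \cref{mainthm}. A bookkeeping check over the at most twelve combinations of unit type and of whether the base came out bad or good then confirms, in every branch \emph{except those involving} $G_1$, that Builder ends with a green $P_{4k-5}$ within $6k-11$ rounds, a good $P_{4k-5}$ within $6k-9$ rounds, or a better $P_{4k-5}$ within $6k-8$ rounds, in fact usually with room to spare.

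The delicate point — and the step I expect to be the main obstacle — is the $G_1$ branch (equivalently, the branch that in \cref{mainthm} terminates at $G_1\cup G_1$): the naive strategy of building a standalone $G_1$ and then splicing it costs exactly one round too many for the ``better $P_{4k-5}$ within $6k-8$'' target. To fix this one should \emph{not} build the gadget separately, but interleave: once a red‑plus‑blue $P_3$ has been forced, Builder extends the base path directly by a chain of forced‑green edges (an edge joining a red‑incident vertex to a blue‑incident vertex, or any edge out of a better vertex, must be green) that threads the four new vertices onto the base with no standalone‑gadget overhead; when the base is good, Builder launches this chain from the good endpoint of the base and, as in the closing argument of \cref{c4k}, invokes the colour‑determination guarantee of \cref{G1andG1} to line up the non‑green edge at the new endpoint opposite to the one at the base's good endpoint. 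Where even this remains tight, Builder instead secures the weaker alternative ``good $P_{4k-5}$ within $6k-9$ rounds'', which the disjunction in the statement permits. Carrying out this interleaved construction and the routine verification of the remaining branches completes the proof.
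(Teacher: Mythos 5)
Your decomposition $4k-5=(4k-9)+4$ (base path from \cref{final3lemma}, four-vertex unit from \cref{cbasiclemma}) is one round too tight in more branches than you acknowledge, and the lemmas you cite do not close them. Concretely, suppose Painter concedes only a good $P_{4k-9}$ at round $6k-14$ and then steers \cref{cbasiclemma} to $G_4$ (four more rounds, so round $6k-10$). The green part of $G_4$ spans only three vertices, so at least two further green edges are needed before any $P_{4k-5}$ can exist; hence the disjunct ``good within $6k-9$'' is already unreachable, and for ``better within $6k-8$'' you have exactly two rounds. In those two rounds the only forced-green moves are edges at the better vertex $v_3^4$ (plus $v_2^4v_3^4$), and every $P_{4k-5}$ obtainable this way has as its far endpoint either the untouched base endpoint (not even good) or the good-but-not-better base endpoint; since Builder can never force Painter to color an edge \emph{non}-green, the missing better endpoint cannot be manufactured. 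Your own cited route (forced green $v_2^4v_3^4$, then \cref{cmainlemma}(3)) finishes at $6k-7$. The same one-round overshoot occurs for (good base, $G_3$) and (good base, $G_6$) whenever Painter declines the single connecting edge in \cref{cmainlemma}(1) (better only by $6k-7$), in addition to the $G_1$ branches you do flag. So the bookkeeping claim that ``every branch except those involving $G_1$'' fits the budget is false.

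Moreover, the sketched repair for the $G_1$ branch is not a proof: ``interleave and thread the four new vertices by forced-green edges, or else fall back to a good $P_{4k-5}$ within $6k-9$'' is exactly the assertion that needs a Builder strategy, and none is given; after a red--blue $P_3$ is forced (two rounds), attaching four new vertices to a bad base \emph{and} securing a good or better endpoint within the remaining four to five rounds is precisely what Painter can contest. The appeal to \cref{G1andG1} is also out of budget here, since that lemma presupposes $G_1\cup G_1$ (six rounds) plus six further rounds. The paper sidesteps all of this with a different split: it takes $P_{4k-8}$ from \cref{finallemma} (green within $6k-14$, or good within $6k-13$) and attaches a three-vertex gadget $H_1$, $H_2$, or $H_3$ from \cref{cbasiclemma2}, merging via \cref{cmainlemma}(2)(4), \cref{Tlemma}, and a short two-move argument for the pair (good base, $H_3$); every branch then fits. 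Your $(4k-9)+4$ split trades one saved round on the base for one extra vertex to append, which in this $3/2$-density regime costs more than one round in the worst branches --- that is the root of the gap.
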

\begin{proof}
	We first apply Theorem~\ref{finallemma}, by which Builder either obtains a green $P_{4k-8}$ within $6k-14$ rounds, or a good $P_{4k-8}$ within $6k-13$ rounds. Then Builder applies Lemma~\ref{cbasiclemma2} to construct one of the graphs $H_1$, $H_2$, or $H_3$, each vertex-disjoint from the $P_{4k-8}$.

	If Builder obtains a green $P_{4k-8}$ within $6k-14$ rounds, we apply Lemma~\ref{cmainlemma} (4) in the case of $H_1$, Lemma~\ref{Tlemma} in the case of $H_2$, and Lemma~\ref{cmainlemma} (2) in the case of $H_3$, which proves the lemma.

	If Builder obtains a good $P_{4k-8}$ within $6k-13$ rounds, then we apply Lemma~\ref{cmainlemma} (2) for $H_1$, and Lemma~\ref{Tlemma} for $H_2$, which proves the lemma.

	If Builder constructs a good $P_{4k-8}$ and a copy of $H_3$ disjoint from it, let $x$ denote a good endpoint of the $P_{4k-8}$ and $y$ its other endpoint. Builder adds the edge $yv_1^3$. If $yv_1^3$ is green, then a good $P_{4k-5}$ is obtained. If $yv_1^3$ is not green, then $v_1^3$ becomes a better point. In this case, Builder draws the necessarily green edge $yv_2^3$, resulting in a better $P_{4k-5}$: $xP_{4k-8}yv_2^3v_3^3v_1^3$.
\end{proof}

\begin{theorem}\label{c4k+3}
For all $k\ge 4$, we have $\tilde{r}(P_3,P_3,C_{4k+3}) \leq 6k+5$.
\end{theorem}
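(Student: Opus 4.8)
The plan is to imitate the proof of Theorem~\ref{c4k}, this time splitting $4k+3=(4k-5)+8$. First I would apply Lemma~\ref{c4k+3Lemma} to produce one of three starting objects: a green $P_{4k-5}$ within $6k-11$ rounds, a good $P_{4k-5}$ within $6k-9$ rounds, or a better $P_{4k-5}$ within $6k-8$ rounds. Next, on fresh vertices, apply Lemma~\ref{mainthm} to build $G_1\cup G_1$ within $6$ rounds, a green $P_8$ within $7$ rounds, a good $P_8$ within $9$ rounds, or a better $P_8$ within $11$ rounds; in the $G_1\cup G_1$ case one may invoke Lemma~\ref{G1andG1} to turn it, in $6$ further rounds, into a better $P_8$ whose good endpoint carries a non-green edge of a colour chosen by Builder. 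Finally, merge $P_{4k-5}$ with the $8$-vertex object into a green $C_{4k+3}$ using Lemma~\ref{Connection Lemma}, and check branch by branch that the total does not exceed $6k+5$.

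Most branches are routine bookkeeping. Against a green, good, or better $P_8$ one uses parts (3), (2), (4) of Lemma~\ref{Connection Lemma} when $P_{4k-5}$ is plain green, parts (2), (1), (4) when it is good, and part (4) (which costs only $3$ rounds) when it is a better path; in each such case the running total lands in $\{6k+2,\dots,6k+5\}$. Two branches need a closure cheaper than Lemma~\ref{Connection Lemma} gives. If $P_{4k-5}$ is a better path and the $8$-vertex object is a better $P_8$, Builder closes in $2$ rounds: drawing the edge from the better endpoint of $P_{4k-5}$ to one end of $P_8$, and then the edge from the better endpoint of $P_8$ to the free end of $P_{4k-5}$, are both forced green, since a non-green colour would create a monochromatic $P_3$ at one of the two better endpoints. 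If $P_{4k-5}$ is good and the $8$-vertex object is the better $P_8$ obtained via $G_1\cup G_1$, Builder again closes in $2$ rounds via the colour-matching idea of Theorem~\ref{c4k}: pick in Lemma~\ref{G1andG1} the colour opposite to the non-green edge at the good endpoint of $P_{4k-5}$, so the joining edge is forced green, and then use the better endpoint of $P_8$ for the final edge.

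The one branch that requires a genuinely new construction is a better $P_{4k-5}$ (built in $6k-8$ rounds) together with a disjoint $G_1\cup G_1$ (built in $6$ more rounds, total $6k-2$), where only $7$ further rounds are affordable and routing through Lemma~\ref{G1andG1} would cost $6+2$. Here I would argue directly. Write the two copies of $G_1$ on $\{a_1,a_2,a_3,a_4\}$ and $\{b_1,b_2,b_3,b_4\}$, with green edges $a_1a_4,b_1b_4$, blue edges $a_2a_4,b_2b_4$, red edges $a_3a_4,b_3b_4$, and let $u$ be the better endpoint and $u'$ the good endpoint of $P_{4k-5}$, say with a red edge at $u'$. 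Builder draws the seven edges $ua_1$, $a_4b_1$, $b_4a_3$, $a_3b_2$, $b_2b_3$, $b_3a_2$, $a_2u'$; each is forced green because it is either incident to one of the better vertices $u,a_4,b_4$ or joins a blue-incident vertex to a red-incident vertex. Together with the pre-existing green edges $a_1a_4$ and $b_1b_4$, this produces the green cycle running along $P_{4k-5}$ from $u'$ to $u$ and then $u\,a_1\,a_4\,b_1\,b_4\,a_3\,b_2\,b_3\,a_2\,u'$, of length $(4k-5)+8=4k+3$, reached in exactly $6k+5$ rounds; if $u'$ is incident to a blue edge one uses the red--blue-mirrored Hamilton path through $\{a_i,b_i\}$ that ends at a red-incident vertex. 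The main obstacle is exactly making the round budget close in this last branch; everywhere else the count is a direct adaptation of Theorem~\ref{c4k}.
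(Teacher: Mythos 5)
Your proposal is correct and follows essentially the same route as the paper: the same split $(4k+3)=(4k-5)+8$ via Lemma~\ref{c4k+3Lemma} and Lemma~\ref{mainthm}, the same case analysis with Lemma~\ref{Connection Lemma} and the colour-matching use of Lemma~\ref{G1andG1}, with matching round counts. The only difference is cosmetic: in the branch with a better $P_{4k-5}$ and $G_1\cup G_1$, the paper first turns the two copies of $G_1$ into a parity-chosen good $P_8$ in five forced-green moves and then closes in two more, whereas you close the cycle directly with seven forced-green edges; both constructions are valid and both total $6k+5$ rounds.
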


\begin{proof}
We first apply Lemma~\ref{c4k+3Lemma}, by which Builder constructs a green $P_{4k-5}$. Then, using Lemma~\ref{mainthm}, Builder obtains a graph that is vertex-disjoint from $P_{4k-5}$. This graph is either $G_1\cup G_1$ (where $G_1$ is shown in Figure~\ref{fig:c-unit}(a)) or a green $P_8$. We proceed by considering the following three cases.

\setcounter{case}{0}
\begin{case}
$P_{4k-5}$ is a green path constructed within $6k-11$ rounds.
\end{case}

If $P_8$ is a green path constructed in 7 rounds, a good path in 9 rounds, or a better path in 11 rounds, then by applying parts (3), (2), and (4) of Lemma~\ref{Connection Lemma}, respectively, we can connect $P_8$ and $P_{4k-5}$ to obtain a green $C_{4k+3}$. If Builder constructs $G_1 \cup G_1$ using Lemma~\ref{mainthm}, then Lemma~\ref{G1andG1} guarantees that a better $P_8$ can be formed in the following 6 rounds. Finally, applying Lemma~\ref{Connection Lemma} (4) yields a green $C_{4k+3}$ within a total of $6k+5$ rounds.

\begin{case}
$P_{4k-5}$ is a good path constructed within $6k-9$ rounds.
\end{case}

If $P_8$ is a green path constructed in 7 rounds, a good path in 9 rounds, or a better path in 11 rounds, then applying parts (2), (1), and (4) of Lemma~\ref{Connection Lemma}, respectively, connects $P_8$ with $P_{4k-5}$ to yield a green $C_{4k+3}$. If Builder constructs $G_1 \cup G_1$ via Lemma~\ref{mainthm}, then applying Lemma~\ref{G1andG1}, a better $P_8$ can be constructed in the following 6 rounds. Moreover, this better $P_8$ can be constructed so that the color of the edge incident to its good endpoint is opposite to that of the edge incident to the good endpoint of $P_{4k-5}$. It follows that in 2 additional rounds, the good $P_{4k-5}$ and the better $P_8$ can be connected to form a green $C_{4k+3}$. Thus, the total number of rounds does not exceed $6k+5$.

\begin{case}
$P_{4k-5}$ is a better path constructed within $6k-8$ rounds.
\end{case}

Let the endpoints of $P_{4k-5}$ be $x$ and $y$, where $x$ is a good vertex and $y$ is a better vertex. If $P_8$ is a green path constructed in 7 rounds or a good path in 9 rounds, then applying Lemma~\ref{Connection Lemma} (4) yields a green $C_{4k+3}$. If $P_8$ is a better path constructed in 11 rounds, then 2 more rounds suffice to obtain a green $C_{4k+3}$. If Builder constructs $G_1 \cup G_1$ via Lemma~\ref{mainthm}, then for convenience, we relabel the vertices $v_i^1$ in one of the $G_1$ graphs (as shown in Figure~\ref{fig:c-unit}) as $u_i^1$ for $i\in [4]$. Builder draws the following edges, which must be green: $v_3^1u_2^1$, $u_2^1u_3^1$, $u_3^1v_2^1$, $v_2^1u_4^1$, $u_1^1v_4^1$. This forms, in 5 rounds, a good $P_8$ whose good vertex is incident to a red edge. By symmetry, Builder can also obtain a good $P_8$ with the good endpoint incident to a blue edge in 5 rounds. Therefore, without loss of generality, we may assume that $x$ is incident to a blue edge, and in 2 further rounds, a green $C_{4k+3}$ can be formed with the better $P_{4k-5}$. In all of the above scenarios, the total number of rounds is at most $6k+5$.
\end{proof}

Combining Theorems~\ref{c4k}, \ref{c4k+1}, \ref{c4k-2}, and \ref{c4k+3}, we obtain the upper bound in Theorem~\ref{thm:main2}.\qed

\end{document}